\definecolor{shadecolor}{gray}{0.875}
\let\oldtocsection=\tocsection
\let\oldtocsubsection=\tocsubsection
\let\oldtocsubsubsection=\tocsubsubsection
\renewcommand{\tocsection}[2]{\hspace{0em}\oldtocsection{#1}{#2}}
\renewcommand{\tocsubsection}[2]{\hspace{1em}\oldtocsubsection{#1}{#2}}
\renewcommand{\tocsubsubsection}[2]{\hspace{2em}\oldtocsubsubsection{#1}{#2}}
\newcommand{\Rmnum}[1]{\expandafter\@slowromancap\romannumeral #1@}
\numberwithin{equation}{section}
\newcommand{\ie}{\textit{i}.\textit{e}.}
\newcommand{\resp}{resp.\,}
\theoremstyle{plain}
\newtheorem{prop}{Proposition}[section]
\newtheorem{theo}[prop]{Theorem}
\newtheorem{coro}[prop]{Corollary}
\newtheorem{lemm}[prop]{Lemma}
\theoremstyle{definition}
\newtheorem{defi}[prop]{Definition}
\newtheorem{conj}[prop]{Conjecture}
\newtheorem{rema}[prop]{Remark}
\newtheorem{exam}[prop]{Example}
\newtheorem{nota}[prop]{Notation}
\def\ra{\rightarrow}
\def\cF{{\mathcal F}}
\def\cL{{\mathcal L}}
\def\cO{{\mathcal O}}
\def\cU{{\mathcal U}}
\def\bA{{\mathbb A}}
\def\bC{{\mathbb C}}
\def\bP{{\mathbb P}}
\def\bQ{{\mathbb Q}}
\def\bR{{\mathbb R}}
\def\bZ{{\mathbb Z}}
\def\Br{\mathrm{Br}}
\def\Eff{\overline{\mathrm{Eff}}}
\def\Pic{\mathrm{Pic}}
\def\et{\mathrm{\acute{e}t}}
\def\vol{\mathrm{Vol}}
\def\Nef{\mathrm{Nef}}
\def\Hilb{\mathrm{Hilb}}
\def\Supp{\mathrm{Supp}}
\def\Spec{\mathrm{Spec}}
\def\Pic{\mathrm{Pic}}
\def\Sing{\mathrm{Sing}}
\def\Span{\mathrm{Span}}
\def\Gal{\mathrm{Gal}}
\def\Proj{\mathrm{Proj}}
\def\piet{\pi_1^{\text{\'et}}}
\author{Runxuan Gao}
\address{Graduate School of Mathematics, Nagoya University, Furocho Chikusa-ku, Nagoya, 464-8602, Japan}
\email{m20015x@math.nagoya-u.ac.jp}
\title[Examples]{The geometric exceptional set in Manin's conjecture for Batyrev and Tschinkel's example}
\begin{document}

\date{\today}

\begin{abstract}
%
Batyrev and Tschinkel's example is a Fermat cubic surface bundle $X$ which is a Fano $5$-fold.
It is the first example for which Manin's conjecture can never hold for a proper closed exceptional set.
Recently, Lehmann, Sengupta, and Tanimoto proposed a conjectural geometric description of the exceptional set in Manin's conjecture and showed that it is always contained in a thin set.
Over a field of characteristic $0$, we explicitly construct finitely many thin maps such that any thin map $f:Y\rightarrow X$ with equal or larger $a$- and $b$-values in lexicographical order factors rationally through one of them.
In particular, this defines a thin set which coincides with Lehmann-Sengupta-Tanimoto's conjectural exceptional set.
\end{abstract}
\maketitle
\tableofcontents

\section{Introduction}
One of the major goals in algebraic geometry is to study rational points on varieties.
They are, roughly speaking, solutions to a system of polynomial equations.
It is believed that the behavior of rational points reflects the geometry of the variety, and vice versa.
For instance, Lang's conjecture \cite{Lang1983} predicts that the set of rational points on a variety of general type is not Zariski dense.
In dimension $1$, this is Faltings' celebrated theorem \cite{Faltings1983,Faltings1991}.

On the other extreme, a conjecture of Colliot-Th\'el\`ene \cite[p. 174]{ColliotThlne2003} implies that the set of rational points on a smooth projective geometrically rationally connected variety defined over a number field is Zariski dense as soon as it is not empty.
In this case, it is natural to study the distribution of rational points with respect to some heights.
Manin's conjecture predicts that the asymptotic formula for the number of rational points of bounded height on this kind of varieties depends only on geometric invariants of the underlying variety.

It turns out that on many varieties,
some proper closed subset contains all the rational points asymptotically.
Thus we must remove a subset of rational points to count, called the exceptional set, to obtain a meaningful asymptotic formula reflecting the geometry of the whole variety.
(In fact, there are other types of rational points that should also be excluded, see Remark \ref{rema-ges}.)

The original Manin's conjecture, formulated in \cite{FMT89,Batyrev1990}, predicts that the exceptional set is always a proper closed subset, and this version of the conjecture has been established for many classes of varieties: generalized flag varieties \cite{FMT89}, toric varieties \cite{batyrev1998manin}, low-degree hypersurfaces of high dimensions \cite{FMT89,Sch79,Peyre95,Hooley1994}, and so forth.

However, counterexamples to this closed-set version of Manin's conjecture were found by Batyrev and Tschinkel \cite{batyrev1996rational} (see Propsition \ref{prop-BT}).
As such, it was suggested in \cite{Peyre03} to relax the exceptional set to be a thin set in the sense of Serre \cite{serre2016topics} (see Definition \ref{defi-thin-set}).
This makes sense since Colliot-Th\'el\`ene's conjecture \cite[p. 174]{ColliotThlne2003} also implies that the set of rational points is not thin as soon as it is not empty.

While Manin's conjecture has an explicit asymptotic formula, it lacked a geometric description of the thin exceptional set until a conjectural construction, which we call the geometric exceptional set to eliminate ambiguity, was proposed in \cite{LTDuke,LST} (see Definition \ref{defi-ges}).
In \cite{LTDuke,LST}, the authors proved that the geometric exceptional set for geometrically uniruled smooth projective variety is always contained in a thin set,
which provides evidence for both the thin-set version of Manin's conjecture and the geometric exceptional set.
The idea of the geometric exceptional set is straightforward, but its computation is not easy in general.
In the present paper, we compute the geometric exceptional set for Batyrev and Tschinkel's example.


Let $X$ be a smooth projective variety over a number field $F$ and $\cL$ be an adelically metrized bundle on $X$,
where we denote the underlying divisor of $\cL$ by $L$.
Then we may define a height function
$$H_\cL:X(F)\rightarrow\bR,$$
see \cite[Section 2]{chambert2010igusa} for the basics on heights and adelically metrized line bundles.
When $L$ is ample, the counting function
$$N(U,\cL,B):=\#\{ x\in U\mid H_\cL(x)\leq B \}$$
takes finite values for any subset $U$ of $X$ and when $L$ is big and nef, it is finite for any $U$ outside a proper closed subset of $X$, see \cite[Section 2.2]{LT19}.

Recall that a thin set of $X(F)$ is a subset of $X(F)$ which is a finite union $\bigcup_if_i(Y_i(F))$ where $f_i:Y_i\rightarrow X$ are generically finite morphisms of varieties which admit no rational section (\ie\ a rational point on the generic fiber), see Section \ref{sec-thin-set}.
Manin's conjecture predicts that the rational point counts are asymptotically determined by geometric invariants of $X$ and $L$.
The following version of Manin's conjecture appears in \cite{LST}.

\begin{conj}[Manin's conjecture]\label{conj-manin}
	Let $X$ be a smooth projective geometrically rationally connected variety over a number field $F$ and $\cL$ be an adelically metrized big and nef line bundle on $X$.
	Suppose that $X(F)$ is not a thin set. Then there exists a thin subset $Z\subset X(F)$ such that
	\begin{equation}\label{eq-manin}
		N(X(F)\backslash Z,\cL,B)\sim c(F,Z,\cL)B^{a(X,L)}\log(B)^{b(F,X,L)-1}
	\end{equation}
	as $B\rightarrow\infty$.
\end{conj}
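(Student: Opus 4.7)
The plan is, following Lehmann--Sengupta--Tanimoto, to take $Z$ to be the geometric exceptional set $Z^{\mathrm{geom}}(X,L)$, \ie\ the union of the images $f_i(Y_i(F))$ running over the (finitely many) thin maps $f_i:Y_i\to X$ with $(a,b)$-values greater than or equal to those of $X$ in lexicographic order. The first step is to verify that $Z^{\mathrm{geom}}$ is actually a thin set; this is done by LST for geometrically uniruled smooth projective varieties, and made explicit for the Batyrev--Tschinkel example in the present paper, so the candidate for $Z$ in \eqref{eq-manin} is available.

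The next step is to extract the asymptotic on $X(F)\setminus Z$. I would package this through the height zeta function
\[
\mathfrak{Z}(X(F)\setminus Z,\cL,s):=\sum_{x\in X(F)\setminus Z}H_\cL(x)^{-s},
\]
which converges in some right half-plane $\mathrm{Re}(s)>\sigma_0$. The aim is to establish a meromorphic continuation to a region slightly to the left of $\mathrm{Re}(s)=a(X,L)$, with a unique rightmost pole at $s=a(X,L)$ of order exactly $b(F,X,L)$, and then apply a Tauberian theorem (\eg\ Delange--Ikehara or a refinement handling poles of integral order) to obtain \eqref{eq-manin}, reading off $c(F,Z,\cL)$ from the leading coefficient; one expects $c(F,Z,\cL)$ to coincide with Peyre's Tamagawa-type constant.

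The meromorphic continuation step must be carried out by exploiting the geometry of $X$ case by case. For $X$ an equivariant compactification of a homogeneous space (flag, toric, or additive-group case), one uses harmonic analysis on $X(\bA_F)$ and Poisson summation to express $\mathfrak{Z}$ as an adelic integral and continue it spectrally. For $X$ fibered into varieties on which Manin's conjecture is known---as in the Batyrev--Tschinkel example, a cubic surface bundle over $\bP^2$---one would fiber the height zeta function base-by-base, apply the known asymptotics on each good fiber, and integrate over the base, using that $Z^{\mathrm{geom}}$ is designed to absorb both the accumulating sections and the atypical fibers. For smooth low-degree hypersurfaces and complete intersections, the circle method delivers the continuation and the pole order directly.

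The hardest part is the meromorphic continuation itself: it is open in full generality, and each known instance has required a tailored analytic technique compatible with the geometry of $X$. A second, conceptual subtlety is matching the analytic pole order with the geometric invariant $b(F,X,L)$: one must show that every thin subset of $X(F)$ contributing a pole of order $>b(F,X,L)$ to the zeta function already lies inside $Z^{\mathrm{geom}}$---this is precisely what the thinness theorem of LST, together with the explicit description of such thin maps obtained in this paper for the Batyrev--Tschinkel example, is meant to guarantee---and conversely that no genuinely thinner choice of $Z$ would reproduce the conjectured leading behavior $B^{a}\log(B)^{b-1}$. Making these two descriptions of $b$---the geometric one via Fujita invariants of dominant thin maps, and the analytic one via the pole order of $\mathfrak{Z}(\cdot,s)$---agree uniformly across strata and bad fibers is, in my view, the principal obstacle.
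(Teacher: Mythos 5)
This statement is labeled as a \emph{Conjecture}, and the paper neither claims nor supplies a proof of it. The paper's contribution is orthogonal to what you are attempting: it pins down, for the Batyrev--Tschinkel example, the precise thin set $Z$ that the Lehmann--Sengupta--Tanimoto framework predicts should appear in \eqref{eq-manin}, but it makes no attempt to establish the asymptotic itself. Indeed, Conjecture \ref{conj-manin-cubic} in the paper is exactly Conjecture \ref{conj-manin} specialized to this $(X,L)$ with $Z$ made explicit, and the paper notes that even the case of a single smooth cubic surface remains open. So there is no proof in the paper to compare against.

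Your proposal is a reasonable roadmap rather than a proof, and to your credit you say so: the meromorphic continuation of the height zeta function past $\mathrm{Re}(s)=a(X,L)$ with a pole of exact order $b(F,X,L)$ is precisely the open analytic core, and no uniform technique is known. Two further points. First, a minor factual slip: in the version of the Batyrev--Tschinkel example treated here, $\pi_x$ fibers $X$ over $\bP_x^3$, not $\bP^2$ (the $\bP^2$ base appears in Batyrev--Tschinkel's original quadric-coefficient variant, cf.\ Remark \ref{rema1.7}). Second, your ``conceptual subtlety'' at the end conflates two separate issues: the LST theorem (and the present paper's Theorem \ref{theo-factor-through}) guarantees that every thin map with $(a,b)$-values $\geq (a(X,L),b(F,X,L))$ factors through the finitely many pieces of $Z$, which is a purely geometric statement; it does not by itself say anything about pole orders of partial height zeta functions attached to thin sets, and bridging that gap --- showing that removing $Z$ kills exactly the excess contribution to the counting function and no more --- is an analytic problem that is genuinely independent of, and in addition to, the geometric classification carried out here.
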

\begin{rema}
	The invariants $a(X,L)$ and $b(F,X,L)$ will be defined in Section \ref{ab} and $c(F,Z,L)$ is the Peyre's constant introduced in \cite{Peyre95,BT98}.
	When $(X,L)$ is adjoint rigid (see Section \ref{sec-adjoint-rigid}), $c(F,Z,L)$ is independent of the exceptional set $Z$ by definition.
\end{rema}

The key special case of Batyrev and Tschinkel's example is as follows.
\begin{nota}[Fermat cubic surface bundle]\label{notation}
	Let $X$ be the smooth Fano hypersurface in $\bP_x^3\times\bP_y^3$ defined by the equation
	$$x_0 y_0^3+x_1 y_1^3+x_2 y_2^3+x_3 y_3^3=0$$
	over a field $F$ of characteristic $0$ and let $L=-K_X$ be the anticanonical divisor.
	Write $\pi_x:X\ra \bP_x^3$ and $\pi_y:X\ra \bP_y^3$ for the canonical projections.
\end{nota}
\begin{prop}[\cite{batyrev1996rational,FLS18}]\label{prop-BT}
	With Notation \ref{notation}, Manin's conjecture for $(X,L)$ is not true for any proper closed exceptional set.
\end{prop}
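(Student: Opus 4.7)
The strategy is to exhibit a Zariski-dense family of cubic surface fibers of $\pi_x:X\to\bP_x^3$ whose rational point contributions individually outrun the Manin prediction, making it impossible for any proper closed subset $Z$ to serve as the exceptional set. First I compute the Manin invariants. Since $X\subset\bP_x^3\times\bP_y^3$ is a smooth hypersurface of bidegree $(1,3)$, adjunction yields $-K_X=\cO(3,1)|_X$, which is ample; the class $(1-t)K_X$ fails to be pseudo-effective for $t<1$, so $a(X,-K_X)=1$. By the Lefschetz hyperplane theorem, $\Pic(X)\cong\bZ^2$ is generated by $\pi_x^*\cO(1)$ and $\pi_y^*\cO(1)$, giving $b(F,X,-K_X)=2$, so \eqref{eq-manin} would predict $N(X(F)\setminus Z,\cL,B)\sim c(F,Z,\cL)\,B\log B$.

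Next I analyze the cubic surface fibration $\pi_x:X\to\bP_x^3$. Its generic fiber $S_x$ is a smooth diagonal cubic surface $\{\sum x_iy_i^3=0\}\subset\bP_y^3$, and since $\pi_x^*\cO(1)|_{S_x}$ is trivial one has $(-K_X)|_{S_x}=-K_{S_x}$ and the anticanonical height factorizes as $H_\cL(x,y)=H(x)^3H(y)$. Let $\Sigma\subset\bP_x^3(F)$ consist of the points $x=(b_0^3:b_1^3:b_2^3:b_3^3)$ with $b_i\in F^\ast$; since cubing does not annihilate any nonzero homogeneous polynomial over the infinite field $F$, the set $\Sigma$ is Zariski-dense in $\bP_x^3$. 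For each $x\in\Sigma$, the substitution $y_i\mapsto b_i^{-1}y_i$ identifies $S_x$ $F$-isomorphically with the Fermat cubic $\{\sum y_i^3=0\}$, which contains the three $F$-rational lines $\ell_1=\{y_0+y_1=0=y_2+y_3\}$, $\ell_2=\{y_0+y_2=0=y_1+y_3\}$, $\ell_3=\{y_0+y_3=0=y_1+y_2\}$; together with the hyperplane class these force Picard rank $\rho(S_x)\ge 3$ over $F$ (and $\rho(S_x)=7$ when $F\supset\bQ(\zeta_3)$).

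Now let $Z\subsetneq X$ be any proper closed subset. The locus $\{x\in\bP_x^3:S_x\subset Z\}$ is a proper closed subset of $\bP_x^3$ and therefore does not contain the Zariski-dense $\Sigma$; pick $x_0\in\Sigma$ with $S_{x_0}\not\subset Z$, so that $Z\cap S_{x_0}$ is a proper closed subset of $S_{x_0}$. Two cases arise. If some $F$-rational line $\ell\subset S_{x_0}$ is not contained in $Z$, then $(-K_X)|_\ell=\cO_\ell(1)$ and Schanuel on $\ell\cong\bP^1$ yield $N(\ell(F)\setminus Z,\cL,B)\gg B^2$, already dominating $B\log B$. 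Otherwise $Z\cap S_{x_0}$ contains every $F$-rational line of $S_{x_0}$, and Manin's conjecture for the Fermat cubic surface (applicable in the split setting through the work of Heath-Brown and others) then gives $N(S_{x_0}(F)\setminus Z,\cL,B)\gg B(\log B)^{\rho(S_{x_0})-1}\gg B(\log B)^2$. In either case $N(X(F)\setminus Z,\cL,B)$ grows strictly faster than $B\log B$, contradicting \eqref{eq-manin}.

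\textbf{Main obstacle.} The delicate step is the second case of the above dichotomy, which invokes Manin's conjecture for the Fermat cubic surface; this is not yet fully established in all settings. When it is unavailable, one instead leverages the variation of $x_0$ across the dense $\Sigma$: although the sweep-out $\tilde D\subset X$ of $F$-rational lines in fibers is a proper closed divisor, summing fiber-by-fiber contributions on $X\setminus\tilde D$--estimated via unirational parametrizations of $S_x$ ($x\in\Sigma$) by pairs of $F$-rational lines--still produces an asymptotic of order $B(\log B)^k$ with $k\ge 2$, obstructing \eqref{eq-manin} for any proper closed $Z$. The careful accounting between Peyre's adelic constant and these fiber contributions is the technical heart of the argument, carried out rigorously in \cite{batyrev1996rational,FLS18}.
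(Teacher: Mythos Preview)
The paper does not prove this proposition; it is quoted as a known result from \cite{batyrev1996rational} and \cite{FLS18} (cf.\ Remark~\ref{rema1.7}). Your overall architecture---the computation $a=1$, $b=2$, the Zariski-dense set $\Sigma$ of cube points whose fibers are $F$-isomorphic to the Fermat cubic with Picard rank $\geq 3$, and the resulting excess of rational points over the prediction $B\log B$---is precisely the mechanism isolated by Batyrev and Tschinkel.

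There is, however, a genuine gap in your Case 2. Manin's conjecture is not known for \emph{any} smooth cubic surface; in particular it is open for the Fermat cubic, and Heath-Brown's work supplies only upper bounds, not the asymptotic. So the sentence ``Manin's conjecture for the Fermat cubic surface \dots\ then gives $N(S_{x_0}(F)\setminus Z,\cL,B)\gg B(\log B)^{\rho(S_{x_0})-1}$'' is unjustified. You correctly flag this as the main obstacle, but the patch you sketch---summing fiber contributions via the birational parametrization of $S_x$ by a pair of skew $F$-rational lines---is only gestured at, not executed, and a single such parametrization gives at best $\gg B\log B$ on one fiber, which is not enough. The substantive work in the cited references is exactly to supply the needed \emph{unconditional} lower bound: \cite{batyrev1996rational} works over $F\supset\bQ(\zeta_3)$, where each $S_x$ with $x\in\Sigma$ is split, and invokes explicit lower bounds for split diagonal cubic surfaces off the lines; \cite{FLS18} removes the $\zeta_3$ hypothesis by a more refined fibration argument. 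Without importing one of these, your dichotomy collapses as soon as the adversary chooses $Z\supset V_1\cup V_2\cup V_3$ (still proper closed), which places every $F$-rational line of every fiber inside $Z$ and forces you into the unproven Case 2.

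A minor correction: your three lines $\ell_1,\ell_2,\ell_3$ on the Fermat cubic are pairwise incident, and indeed $\ell_1+\ell_2+\ell_3$ is the hyperplane section cut by $y_0+y_1+y_2+y_3=0$, so the hyperplane class lies in their span. The three lines alone span a rank-$3$ sublattice (their intersection matrix has determinant $4$), so the conclusion $\rho(S_x)\geq 3$ survives, though not for the stated reason.
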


\begin{rema}\label{rema1.7}
	In the original paper \cite{batyrev1996rational}, the coefficients $x_i$ are replaced by linear equations in two or more variables,
	so that the example can be generalized to any dimension $\geq3$.
	The assumption that $F$ contains $\bQ(\sqrt{-3})$ in \cite{batyrev1996rational} can be removed by \cite{FLS18}.
\end{rema}

We propose a construction of the thin exceptional set for Batyrev and Tschinkel's example.
\begin{nota}[constructing a thin exceptional set]\label{nota-thin-set}
	We follow Notation \ref{notation}.
	For each $\tau\in\mathfrak{S}_4$, define the hypersurface $T_\tau$ in $\bP_x^3\times\bP_{s,t}^1$ by the equation
	$$ s^3x_{\tau(0)}x_{\tau(1)}-t^3x_{\tau(2)}x_{\tau(3)}=0.$$
	(In fact, it is sufficient to pick three such $\tau$, see Section \ref{sec-proof-of-thm}.)
	We define $f_\tau:X_\tau:=X\times_{\bP_x^3}T_\tau\rightarrow X$ by the base change along 
	$\pi_x$.
	For each $\tau\in\mathfrak{S}_4$, define the subvariety $V_\tau\subset X$ by the equations
	$$x_{\tau(0)}y_{\tau(0)}^3+x_{\tau(1)}y_{\tau(1)}^3=x_{\tau(2)}y_{\tau(2)}^3+x_{\tau(3)}y_{\tau(3)}^3=0.$$
\end{nota}
\begin{rema}\label{rema-algebraic-fiber-space}
	The morphisms $f_\tau:X_\tau\ra X$ are thin maps since they are finite.
	Moreover, $X_\tau$ is geometrically irreducible since the tensor product $\overline{F}(\overline{X})\otimes_{\overline{F}(\overline{\bP_x^3})}\overline{F}(\overline{T_\tau})$ of function fields has only one minimal prime ideal by \cite[Proposition B.98]{ulrich}, where we used the fact that $\overline{F}(\overline{\bP_x^3})$ is algebraically closed in $\overline{F}(\overline{X})$ since $X\ra\bP_x^3$ is an algebraic fiber space, see \cite[Example 2.1.12]{Lazarsfeld2004}.
\end{rema}

The following is the main theorem of this paper.
\begin{theo}
	\label{theo-factor-through}
	We follow Notation \ref{notation} and Notation \ref{nota-thin-set}, where $F$ is a field of characteristic $0$.
	Then for any thin map $f:Y\rightarrow X$ from a smooth geometrically integral variety $Y$ defined over $F$ such that
	\begin{equation}\label{eq-ab}
		(a(Y,f^\ast L),b(F,Y,f^\ast L))\geq(a(X,L),b(F,X,L))=(1,2),
	\end{equation}
	there exists $\tau\in \mathfrak{S}_4$ such that $f$ factors rationally though either $V_\tau\subset X$ or $f_\tau:X_\tau\rightarrow X$.
\end{theo}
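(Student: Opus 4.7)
The overall strategy is to dichotomize by whether $f$ is dominant, invoking the standard behavior of the $a$-invariant under generically finite pullback: if $g\colon Y\to Z$ is a dominant generically finite morphism of smooth projective varieties and $M$ is big and nef on $Z$, then $K_Y+tg^*M=g^*(K_Z+tM)+R$ with $R$ effective, so $a(Y,g^*M)\leq a(Z,M)$. Applied to $f$ (after resolving its image $W$ and compactifying $Y$), this means:

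\emph{Case A} (non-dominant): the image $W:=\overline{f(Y)}$ is a proper subvariety of $X$, and the hypothesis $a(Y,f^*L)\geq 1$ forces $a(W,L|_W)\geq 1$. Since $Y$ maps to $W$ through $f$, factoring through some $V_\tau$ is equivalent to showing $W\subseteq V_\tau$.

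\emph{Case B} (dominant): $a(Y,f^*L)\leq a(X,L)=1$ combined with $(a,b)\geq_{\mathrm{lex}}(1,2)$ forces $a(Y,f^*L)=1$ and $b(F,Y,f^*L)\geq 2$.

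For Case A, I would classify proper subvarieties $W\subseteq X$ with $a(W,L|_W)\geq 1$. Split according to whether $\pi_x|_W$ is dominant onto $\bP_x^3$. If $\pi_x|_W$ is not dominant, its image is a proper subvariety of $\bP_x^3$, and $W$ sits in the restricted cubic-surface bundle, whose $a$-invariant can be bounded via adjunction on $\bP_x^3$ combined with the cubic-surface fiberwise computation; I expect these cases to force $W$ into a very small locus lying in all $V_\tau$ simultaneously. If $\pi_x|_W$ is dominant, I reduce to analyzing the generic fiber, a diagonal Fermat cubic surface $\sum x_iy_i^3=0$ over $\overline{F(\bP_x^3)}$. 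The subvarieties of such a cubic with $a$-invariant $\geq 1$ (against $-K$) are exactly the lines, and after Galois-averaging these can only split off as $x_{\tau(0)}y_{\tau(0)}^3+x_{\tau(1)}y_{\tau(1)}^3=0=x_{\tau(2)}y_{\tau(2)}^3+x_{\tau(3)}y_{\tau(3)}^3$, i.e.\ the nine lines defining $V_\tau$. The three pairings $\{\{\tau(0),\tau(1)\},\{\tau(2),\tau(3)\}\}$ of $\{0,1,2,3\}$ explain why three $\tau$ suffice.

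For Case B, I would use the LST/Lehmann--Tanimoto framework for $b$-jumping covers. With $(X,L)$ adjoint rigid and $a=1$, a $b$-jump under a dominant generically finite $f$ is governed by the change in Galois-invariant rank of a suitable Picard-type module along the geometric generic fiber of $\pi_x$. Concretely, the Picard lattice of a smooth diagonal Fermat cubic surface over the generic point of $\bP_x^3$ has rank $7$ geometrically, and the Galois-invariant piece that contributes to $b(F,X,L)=2$ is exhibited by $\pi_x^*\cO(1)$ and $-K_X$. An increase of $b$ after pullback via $f$ forces the Galois action on the remaining classes to partially trivialize, and on the diagonal Fermat cubic surface the minimal Galois extensions that split new classes are generated by cube roots of the cross ratios $x_{\tau(0)}x_{\tau(1)}/x_{\tau(2)}x_{\tau(3)}$. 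These extensions are precisely cut out by $T_\tau\to\bP_x^3$, so the base change defining $X_\tau$ is the universal source of such splittings. Hence $f$ must factor rationally through some $X_\tau$.

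\emph{Main obstacle.} Case B is the hard part. It requires a careful calculation of how the Galois module structure on the Picard group of a diagonal Fermat cubic surface (and the attendant expectation value of rational points) changes under cyclic cubic base extensions of $\bP_x^3$, and one must verify that \emph{any} generically finite cover producing such a splitting factors through at least one of the three cross-ratio covers, rather than through some more exotic variant. A secondary technical point is ensuring that the factorizations produced are rational maps over $F$ (not just over $\overline{F}$) and that degenerate loci where $\pi_x|_W$ is not flat or where $T_\tau$ is singular do not obstruct the argument; I would handle this by working at generic points and invoking Remark~\ref{rema-algebraic-fiber-space} for the geometric integrality of $X_\tau$.
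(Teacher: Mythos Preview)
Your dominant/non-dominant dichotomy does not align with the $V_\tau$/$f_\tau$ dichotomy, and this causes genuine gaps in both cases. In Case~A you assert that factoring through some $V_\tau$ is equivalent to $W\subseteq V_\tau$, but a non-dominant $f$ can perfectly well factor through $f_\tau$ instead: take $Y=W$ a smooth $\pi_x$-fiber of Picard rank $\geq 2$; then $a(W,L)=1$, $b(F,W,L)\geq 2$, and $W$ lies in no $V_\tau$, yet by Segre's criterion (Lemma~\ref{lemm-segre}) the base point lifts to some $T_\tau$ and hence $W\hookrightarrow X_\tau$. More generally, your Case~A only extracts $a(W,L)\geq 1$ from the hypothesis, but when $a(W,L)=1$ exactly (fibers of $\pi_y$, conics in $\pi_x$-fibers, smooth $\pi_x$-fibers, or $W=X$ itself in the dominant case) you must bring in the $b$-condition on $Y$, and that condition lives on $Y$, not on $W$. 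Your claim that the proper subvarieties of the generic cubic surface with $a\geq 1$ are ``exactly the lines'' is also false: conics have $a=1$.

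The paper's organizing principle, which your plan is missing, is the adjoint-rigid/non-adjoint-rigid dichotomy for $(Y,f^*L)$. When $(Y,f^*L)$ is adjoint rigid one transfers rigidity to the image $Y'$ (Proposition~\ref{prop-adjoint-rigid-2}) and runs the classification of adjoint rigid subvarieties (Proposition~\ref{prop-adjoint-rigid-subvarieties}) together with the non-existence of adjoint rigid $a$-covers (Theorem~\ref{theo-a-cover}, Lemma~\ref{lemm-a-cover-of-subvariety}). When $(Y,f^*L)$ is not adjoint rigid one takes the Iitaka fibration $Y\dashrightarrow W$, shows via Corollary~\ref{coro-family-breaking} that its general fiber maps to one of the adjoint rigid subvarieties classified above, and only in the case where those images are smooth $\pi_x$-fibers does one invoke the monodromy/Segre argument (Theorem~\ref{theo-monodromy-final}) to produce the factorization through $f_\tau$. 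Your Case~B sketch jumps directly to the Picard-lattice computation without first establishing that $(Y,f^*L)$ cannot be adjoint rigid and that the Iitaka map must be (birationally) a base change of $\pi_x$; these reductions are where Theorem~\ref{theo-a-cover} and the family-breaking machinery do real work.
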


Conversely, $V_\tau\subset X$ and $f_\tau:X_\tau\rightarrow X$ are thin maps satisfying (\ref{eq-ab}).
The following corollary follows immediately by Theorem \ref{theo-factor-through} and Lang-Nishimura theorem.
\begin{coro}\label{coro-thin-set}
	We follow Notation \ref{notation} and Notation \ref{nota-thin-set}, where $F$ is a field of characteristic $0$.
	Then the subset
	$$Z:=\bigcup_{\tau\in\mathfrak{S}_4}V_\tau(F)\cup \bigcup_{\tau\in\mathfrak{S}_4} f_\tau(X_\tau(F))$$
	of $X(F)$ is a thin set,
	and we have that
	\begin{equation}\label{eq}
		Z=\bigcup_f f(Y(F))
	\end{equation}
	where $f:Y\rightarrow X$ varies over all $F$-thin maps where $Y$ is a smooth geometrically integral variety satisfying
	$$(a(Y,f^\ast L),b(F,Y,f^\ast L))\geq(a(X,L),b(F,X,L))=(1,2)$$
	in lexicographical order.
	In particular, the thin set $Z$ coincides with the geometric exceptional set defined in Definition \ref{defi-ges}.
\end{coro}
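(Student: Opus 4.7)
The plan is to reduce Corollary~\ref{coro-thin-set} to Theorem~\ref{theo-factor-through} via the Lang--Nishimura theorem, treating the three assertions---thinness of $Z$, the equality (\ref{eq}), and agreement with the geometric exceptional set---in turn.

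First I would verify that $Z$ is a thin set. Each $V_\tau(F)$ is thin because $V_\tau$ is a proper closed subvariety of $X$, and each $f_\tau(X_\tau(F))$ is thin because Remark~\ref{rema-algebraic-fiber-space} already shows $f_\tau$ to be a finite thin map from a geometrically integral variety: the generic fiber of $T_\tau\to\bP_x^3$ is the $\mu_3$-cover $s^3x_{\tau(0)}x_{\tau(1)}=t^3x_{\tau(2)}x_{\tau(3)}$ of degree $3$, which admits no rational section. A finite union of thin sets being thin, $Z$ is thin. For the inclusion $Z\subseteq\bigcup_f f(Y(F))$ in (\ref{eq}), I would invoke the sentence preceding the corollary: both $f_\tau:X_\tau\to X$ and a resolution of singularities $\widetilde V_\tau\to V_\tau\hookrightarrow X$ (available in characteristic $0$) are thin maps from smooth geometrically integral varieties satisfying (\ref{eq-ab}), and any singular rational points of $V_\tau$ missed by the resolution are recovered by inducting on the dimension of their closed strata.

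The substantive step is the reverse inclusion $\bigcup_f f(Y(F))\subseteq Z$. Given a thin map $f:Y\to X$ from a smooth geometrically integral $Y$ satisfying (\ref{eq-ab}), Theorem~\ref{theo-factor-through} supplies $\tau\in\mathfrak{S}_4$ and a rational factorization $g:Y\dashrightarrow W$ with $f=h\circ g$ generically, where $(W,h)$ is either $(V_\tau,j_\tau)$ (with $j_\tau$ the inclusion) or $(X_\tau,f_\tau)$. I would choose a smooth projective compactification $\bar Y$ of $Y$ (Hironaka) and resolve the indeterminacy of the extension $\bar Y\dashrightarrow W$ by successive blow-ups, obtaining a birational morphism $\pi:Y'\to\bar Y$ with $Y'$ smooth projective and a genuine morphism $g':Y'\to W$. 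For any $y\in Y(F)\subseteq\bar Y(F)$, Lang--Nishimura applied to $\pi$ at the smooth $F$-point $y$ yields $y'\in Y'(F)$ with $\pi(y')=y$; then $f(y)=h(g'(y'))\in h(W(F))\subseteq Z$, as required.

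Once (\ref{eq}) is established, the identification of $Z$ with the geometric exceptional set of Definition~\ref{defi-ges} is immediate, since that set is by construction the right-hand side of (\ref{eq}). The main obstacle has already been absorbed by Theorem~\ref{theo-factor-through}; the only non-trivial remaining work is the Lang--Nishimura step above (routine modulo compatible choices of compactification and resolution), together with the bookkeeping needed to ensure that each $V_\tau$ is represented in (\ref{eq}) by a smooth geometrically integral dominator with the correct $(a,b)$-values.
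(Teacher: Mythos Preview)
Your reduction to Theorem~\ref{theo-factor-through} and the Lang--Nishimura step are essentially the same as the paper's proof, and your treatment of thinness is fine. There is, however, a genuine gap in the last step.

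You write that the identification of $Z$ with the geometric exceptional set of Definition~\ref{defi-ges} is immediate because ``that set is by construction the right-hand side of (\ref{eq}).'' This is not correct. Definition~\ref{defi-ges} does \emph{not} take the union over all thin maps with $(a,b)\geq(1,2)$: in the equality case $(a,b)=(1,2)$ it imposes the further restrictions (a), (b), (c). A priori the geometric exceptional set is only a \emph{subset} of the right-hand side of (\ref{eq}). The missing case is that of a thin map $f:Y\to X$ with $(a,b)=(1,2)$, $\dim Y=\dim X$, $\kappa(Y,K_Y+f^\ast L)=0$, and $f$ not face-contracting; such an $f$ contributes to the right-hand side of (\ref{eq}) but not to the geometric exceptional set. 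The paper closes this gap by invoking Theorem~\ref{theo-a-cover}: there is no adjoint rigid $a$-cover of $(X,L)$, so no thin map with $\dim Y=\dim X$ and $\kappa=0$ exists at all, and the restrictions (a), (b), (c) become vacuous. You need either to cite Theorem~\ref{theo-a-cover} for this, or to argue the reverse inclusion $Z\subseteq\text{(geometric exceptional set)}$ directly by checking that your chosen thin maps $\widetilde V_\tau\to X$ and $X_\tau\to X$ actually fall under one of the clauses (1), (2)(a), or (2)(b) of Definition~\ref{defi-ges} (they do: $\dim\widetilde V_\tau<\dim X$, and $(X_\tau,f_\tau^\ast L)$ has positive Iitaka dimension since $X_\tau\to T_\tau$ realizes its canonical fibration), but you have not done this.

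A secondary remark: for the inclusion $Z\subseteq\bigcup_f f(Y(F))$, the paper avoids your ``inducting on the dimension of closed strata'' for the singular locus of $V_\tau$ by instead observing that every $F$-point of $V_\tau$ lies on an $h_2$-line $l$ in a $\pi_x$-fiber with $a(l,L|_l)=2$, and every $F$-point of $f_\tau(X_\tau(F))$ lies on a smooth $\pi_x$-fiber $S$ with $b(F,S,L|_S)\geq 2$ (or on a singular fiber, already in $V_\tau$). These are themselves smooth thin maps landing squarely in case (1) or (2)(a) of Definition~\ref{defi-ges}, so no induction or resolution bookkeeping is needed.
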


Conversely, any rational point on $V_\tau$ lies on a line $l$ on a smooth $\pi_x$-fiber, which satisfies $a(l,L|_l)=2$;
any rational point in $f_\tau(X_\tau(F))$ lies on a smooth $\pi_x$-fiber $Y$ with $a(Y,L|_Y)=1$ and $b(F,Y,L|_Y)\geq 2$.

With Corollary \ref{coro-thin-set}, we can propose a precise statement of Manin's conjecture for $X$, which is a restatement of \cite[Conjecture 5.1]{LST}.

\begin{conj}[Manin's conjecture for the diagonal cubic surface bundle]\label{conj-manin-cubic}
	Let $(X,L)$ be defined as in Notation \ref{notation} where we assume that $F$ is a number field.
	Then Conjecture \ref{conj-manin} is true for $(X,L)$ with the exceptional set $Z$ defined in Corollary \ref{coro-thin-set}.
\end{conj}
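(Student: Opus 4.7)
The strategy is to exploit the \emph{second} projection $\pi_y: X \to \bP_y^3$, which is generically a $\bP^2$-bundle since the defining equation of $X$ is linear in $x$. Each fiber $\pi_y^{-1}(y) = \{x \in \bP_x^3 : \sum_i y_i^3 x_i = 0\}$ is a hyperplane, and by adjunction $\cL = -K_X = \cO(3,1)|_X$, so $H_\cL(x,y) = H(x)^3 H(y)$ up to adelic factors. Hence
\[
N(X(F) \setminus Z, \cL, B) = \sum_{y \in \bP^3(F) \setminus \pi_y(Z)} \#\{ x \in \pi_y^{-1}(y)(F) \setminus Z_y : H(x) \leq (B/H(y))^{1/3} \},
\]
where $Z_y := Z \cap \pi_y^{-1}(y)$.

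The next step is a uniform Schanuel-type asymptotic on the hyperplane $\{\sum y_i^3 x_i = 0\}$: the count with $H(x) \leq T$ should be $\sim c_y T^3 / H(y)^3$ with a power-saving error, where $c_y$ is a product of local densities determined by the coefficients $(y_i^3)$. Specializing $T = (B/H(y))^{1/3}$ gives a fiber contribution of $\sim c_y B / H(y)^4$.

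Summing over $y$ with $H(y) \leq B$ outside the thin set $\pi_y(Z)$, and using Schanuel on $\bP^3$ (i.e.\ $\#\{y : H(y) \leq T\} \sim c_{\bP^3} T^4$), partial summation yields
\[
\sum_{H(y) \leq B} c_y / H(y)^4 \sim C \log B,
\]
so that the total asymptotic is $\sim CB\log B = CB^{a(X,L)}\log(B)^{b(F,X,L)-1}$. One would then identify $C$ with Peyre's constant $c(F, Z, \cL)$ by factoring $c_y/H(y)^4$ into an adelic Euler product and matching each local factor with the Tamagawa measure on $X$.

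The main obstacle is twofold. First, the Schanuel error must be uniform in $y$ with a power saving large enough to survive summation against $\sum H(y)^{-4+\varepsilon}$; Masser--Vaaler- or Thunder-type bounds would be the natural input. Second, the set $Z$ interacts subtly with this fibration: $V_\tau$ cuts each hyperplane fiber $\pi_y^{-1}(y)$ in a line (contributing a lower-order but not obviously negligible count), and the points of $f_\tau(X_\tau(F))$ sit above those $x \in \bP^3(F)$ for which $x_{\tau(0)}x_{\tau(1)}/(x_{\tau(2)}x_{\tau(3)})$ is a cube in $F$---a Galois condition sieving out a thin but non-trivial subset of each hyperplane fiber. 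Proving that the combined removal of these contributions produces exactly $CB\log B$ with $C$ matching Peyre's adelic constant is the most delicate part, reducing in the end to careful bookkeeping of local densities place by place and, in parallel, to verifying via the $\pi_x$ fibration that the $V_\tau$ and $X_\tau$ contributions indeed dominate $B\log B$ (so that their removal is necessary as well as sufficient).
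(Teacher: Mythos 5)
This statement is an open conjecture, not a theorem. The paper presents it explicitly as Conjecture~\ref{conj-manin-cubic} and remarks in the Comparison-to-previous-works paragraph that ``Manin's conjecture for the diagonal cubic surface bundle is still open. Notably, the conjecture is open for even a single smooth cubic surface.'' There is no proof in the paper to compare your attempt against, and your proposal does not constitute a proof either.

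What you have written is a heuristic outline of a strategy, and you are candid that ``the main obstacle is twofold'' and that the thin-set removal is ``the most delicate part.'' But that delicate part \emph{is} the conjecture. Concretely: the $\pi_y$-fibration of $X$ as a $\bP^2$-bundle over $\bP_y^3$ combined with a Schanuel count on hyperplanes reproduces the predicted order $B\log B$ for the \emph{total} count $N(X(F),\cL,B)$ -- this is a reasonable sanity check -- but it gives essentially the same leading term whether or not you remove $Z$, since removing a thin subset from a fixed $\bP^2$-fiber changes the fiberwise Schanuel count only in a lower-order term. The entire content of Batyrev--Tschinkel's Proposition~\ref{prop-BT} is that this naive accounting produces the \emph{wrong} constant: the rational points piling up on the $\pi_x$-fibers of Picard rank $\geq 2$ contribute a positive (in fact divergent, in the appropriate sense) proportion that must be subtracted, and the surviving count must then be shown to match Peyre's adelic product. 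To carry out your outline one would need, at minimum, a version of Schanuel on $\bP^2$ \emph{minus a cube-ratio thin set} with a power-of-$\log$ saving that is uniform in the coefficient vector $(y_0^3,\dots,y_3^3)$, together with a delicate exchange of limits when summing the error terms against $\sum H(y)^{-4}$ over the full range $H(y)\le B$. Neither step is available in the literature, and the constant-matching with Peyre's Tamagawa measure after removing a genuinely thin (non-closed) set has never been done for any variety of this kind. Your final sentence also confuses the logical structure: showing that the $V_\tau$ and $X_\tau$ contributions are large (which is what BT and Frei--Loughran--Sofos prove) establishes that removal is \emph{necessary}, but says nothing about whether the remaining count behaves as conjectured, which is the actual assertion.

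In short: the proposal is a plausible heuristic that recovers the expected shape of the asymptotic, but it identifies rather than resolves the difficulty, and the statement remains open as the paper itself says.
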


The structure of the paper is as follows.
In Section \ref{sec-preliminaries}, we review the background on birational geometry relevant to Manin's conjecture,
where the geometric exceptional is defined in Section \ref{sec-ges}.
Section \ref{sec-the-example} is devoted to studying the geometric exceptional set for Batyrev and Tschinkel's example.
The proofs of Theorem \ref{theo-factor-through} and Corollary \ref{coro-thin-set} are presented in Section \ref{sec-proof-of-thm}.

\subsection*{Comparison to previous works}

The geometric exceptional set for (smooth) del Pezzo surfaces is almost well-understood.
It was shown in \cite{LT19} that a del Pezzo surface of degree $\geq2$ or a general del Pezzo surface of degree $1$ have a proper closed geometric exceptional set.
Nevertheless, examples of del Pezzo surfaces of degree $1$ with Zariski dense geometric exceptional set have been discovered in \cite{gao2023zariski},
which provide the first classes of counterexamples to the closed-set version of Manin's conjecture in dimension $2$.

When comes to higher dimensions, Batyrev and Tschinkel's cubic surface bundles \cite{batyrev1996rational} provide such counterexamples in any dimension $\geq3$.
In \cite[Section 12]{LT19}, the geometric exceptional set for the diagonal quadric surface bundle has been computed, for which the 
Manin's conjecture has been established in \cite{BTB20} with the same exceptional set.
Manin's conjecture for the diagonal cubic surface bundle is still open.
Notably, the conjecture is open for even a single smooth cubic surface.

\noindent
{\bf Acknowledgments.}
The author thanks his advisor, Sho Tanimoto, for suggesting the problem and for many useful conversations.
The author also thanks anonymous referees for careful reading and helpful suggestions.
The author was partially supported by JST FOREST program Grant number JPMJFR212Z and JSPS Bilateral Joint Research Projects Grant number JPJSBP120219935 and Grant-in-Aid for JSPS Fellows Number 24KJ1234.

\subsection*{Notations and Conventions}
A variety $X$ is a separated scheme of finite type over a field $F$.
In this paper, we assume that $F$ is of characteristic $0$.
We denote by $\overline{X}$ the base change to a fixed algebraic closure $\overline{F}$.
A subvariety $Y$ of $X$ is a subscheme which is a variety.
When $X$ is projective, we sometimes use subvariety to indicate projective subvariety.


For real numbers $a_1,b_1,a_2,b_2$, we write $(a_1,b_1)>(a_2,b_2)$ for the lexicographical order, that is, $(a_1,b_1)>(a_2,b_2)$ if and only if either $a_1>a_2$, or $a_1=a_2$ and $b_1>b_2$. The notation $(a_1,b_1)=(a_2,b_2)$ means that $a_1=a_2$ and $b_1=b_2$. And we write $(a_1,b_1)\geq(a_2,b_2)$ when either $(a_1,b_1)>(a_2,b_2)$ or $(a_1,b_1)=(a_2,b_2)$.
For two functions $f,g:\bR\rightarrow\bR$, we write $f(B)\sim g(B)$ to indicate $\lim_{B\rightarrow\infty}f(B)/g(B)=1$.

\section{Preliminaries}\label{sec-preliminaries}

\subsection{Thin maps and thin sets}\label{sec-thin-set}
A morphism $f:Y\ra X$ between varieties is called \textit{generically finite} if the preimage of the generic point of any irreducible component of $X$ is a finite set.
Note that we do not require $f$ to be dominant in the above definition.
The notion of thin sets appears in \cite{serre2016topics}.
\begin{defi}\label{defi-thin-set}
	Let $f:Y\rightarrow X$ be a generically finite morphism of projective varieties over a field $F$.
	Then $f$ is called a \textit{thin map} if it does not admit any rational section (\ie\ a rational point on the generic fiber).
	For a projective variety $X$ over a field $F$, a subset $Z\subset X(F)$ is called a \textit{thin set} if $Z$ is a finite union of $f(Y(F))$ where $f:Y\rightarrow X$ is a thin map.
\end{defi}
\begin{rema}\label{rema-surjective}
	There are two types of a thin map $f:Y\rightarrow X$. When $f$ is dominant, we must have $\deg f\geq2$; whereas when $f$ is not dominant, it can either have degree $\geq2$ or be birational onto its image.
\end{rema}

\subsection{$a$- and  $b$-invariants in Manin's conjecture}\label{ab}
We quickly review some basic concepts in birational geometry, see \cite{Lazarsfeld2004} for more details.
Let $X$ be a projective variety.
Then we can define an intersection pairing between Cartier divisors and $1$-cycles.
We denote by $N^1(X)_\bZ$ (\resp $N_1(X)_\bZ$) the group of Cartier divisors (\resp $1$-cycles) quotiented by those which have zero intersection with any $1$-cycles (\resp effective Cartier divisors).
We define
$$N^1(X):=N^1(X)_\bZ\otimes\bR,\quad N_1(X):=N_1(X)_\bZ\otimes\bR$$
and attach them the Euclidean topology as $\bR$-vector spaces.

A subset $C$ of an $\bR$-vector space is called a \textit{cone} if $\bR_{\geq0}\cdot C\subseteq C$.
A Cartier divisor (\resp $1$-cycle) is called \textit{nef} if it has non-negative intersection with any effective $1$-cycle (\resp Cartier divisor).
After tensoring by $\bR$, nef divisors (\resp $1$-cycle) form a convex cone denoted by $\Nef^1(X)$ (\resp $\Nef_1(X)$).
We define the \textit{pseudo-effective cone} $\Eff^1(X)$ to be the closure of the convex cone spanned by effective $\bR$-divisors in $N^1(X)$.
It turns out that the $\Eff^1(X)$ and $\Nef_1(X)$ are dual in the sense that
$$\Nef_1(X)=\left\{ Z\in N_1(X) \mid  D\cdot Z\geq 0 \text{ for any }D\in\Eff^1(X) \right\}.$$
A Cartier divisor $D$ is called \textit{big} if $mD\equiv A+E$ for some positive integer $m$, ample divisor $A$, and effective divisor $E$.

Let $L$ be a line bundle on smooth projective variety $X$.
If $H^0(X,mL)=0$ for all $m>0$, we define $\kappa(X,L)=-\infty$; otherwise, we define $\kappa(X,L)$ to be the maximum of $\dim (\phi_m(X))$ for $m>0$, where $\phi_m$ is the rational map $X\dasharrow \bP H^0(X,mL)$ associated to the linear system $\lvert mL\rvert$.
We call $\kappa(X,L)$ the \textit{Iitaka dimension} of $(X,L)$.

The $a$-invariant, also known as the Fujita invariant or the pseudo-effective threshold, and its negative, referred to as the Kodaira energy, is defined as follows.
\begin{defi}\label{defi-a}
	Let $X$ be a projective variety over a field of characteristic $0$ and $L$ be a big and nef $\bQ$-divisor on $X$.
	If $X$ is smooth, the $a$-invariant is defined as
	$$a(X,L):=\inf\{ t\in\bR\mid K_X+tL\in \Eff^1(X)\}.$$
	If $X$ is not smooth, then take a smooth resolution $\beta:\widetilde{X}\rightarrow X$ and define $a(X,L):=a(\widetilde{X},\beta^\ast L)$.
\end{defi}
\begin{rema}
	By Proposition \ref{prop-propoties-of-a}, the definition is independent of $\beta$, and the $a$-invariant is a birational invariant.
\end{rema}

\begin{exam}
	When $X$ is a smooth Fano variety and $L=-K_X$, we have that $a(X,L)=1$.
\end{exam}

The following is one of the main results in \cite{BCHM10}.
\begin{prop}\label{prop-a}
	Let $X$ be a smooth projective variety and $L$ be a big and nef $\bQ$-divisor on $X$.
	If $a(X,L)>0$, then $a(X,L)$ is a rational number.
	In this case, we can characterize the $a$-invariant by
	$$a(X,L)=\min\left\{ \frac{r}{s}\in\bQ \middle| h^0(X,sK_X+rL)>0 \right\}.$$
\end{prop}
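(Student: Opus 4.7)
The plan is to reduce both assertions to the main finite-generation and minimal-model results of BCHM \cite{BCHM10}. Set $a := a(X,L) > 0$ throughout.

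First I would dispose of the easy direction. If $r, s$ are positive integers with $h^0(X, sK_X + rL) > 0$, then $sK_X + rL$ is linearly equivalent to an effective divisor, so $K_X + (r/s)L \in \Eff^1(X)$, forcing $r/s \geq a$ by Definition \ref{defi-a}. Hence the infimum on the right-hand side is at least $a$; the set itself is non-empty because $L$ is big, so $sK_X + rL$ is big whenever $r/s$ is sufficiently large, and some positive multiple then has sections.

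For rationality of $a$, I would fix a rational $t_0 > a$, so that $K_X + t_0 L = (K_X + aL) + (t_0 - a)L$ is big (as the sum of a pseudo-effective divisor and a big divisor, using closedness of the pseudo-effective cone). By Kodaira's lemma together with a small general perturbation, represent $t_0 L$ by an effective $\bQ$-divisor $\Delta$ with $(X, \Delta)$ klt; BCHM then yields finite generation of $R(X, K_X + \Delta)$ and a good log minimal model. Letting $t_0$ decrease toward $a$ through rationals, the associated log canonical models decompose the relevant slice of $\Eff^1(X)$ into finitely many rational polyhedral chambers, and $a$ must coincide with a rational wall of this decomposition. Hence $a \in \bQ$.

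Writing $a = r_0/s_0$ in lowest terms, I would then apply BCHM at the boundary value $t = a$ itself: since $a > 0$, the divisor $aL$ is big, and a klt perturbation of $(X, aL)$ still admits a good log minimal model on which $K_X + aL$ becomes semiample. Finite generation produces a non-zero section of $m(s_0 K_X + r_0 L)$ for some positive integer $m$; then $(r, s) = (m r_0, m s_0)$ lies in the defining set on the right-hand side and achieves $r/s = a$. Combined with the first paragraph, the minimum is attained and equals $a$.

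The main obstacle is precisely the boundary case $t = a$: pseudo-effectivity of $K_X + aL$ alone does not produce sections, and one cannot avoid invoking BCHM's existence theorem for log minimal models of klt pairs with big boundary. Everything else, namely the easy inequality and the passage from finite generation to an effective representative, is formal once BCHM is in hand, which is why the proposition is stated as a direct consequence of \cite{BCHM10}.
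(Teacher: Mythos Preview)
Your proposal is correct and follows the same route as the paper: both reduce the statement to the finite-generation and non-vanishing results of \cite{BCHM10}. The paper's proof is entirely by citation (to \cite[Corollary~1.1.7]{BCHM10} and \cite[Theorem~2.16]{HTT15} for rationality, and to \cite[equation~(3.2)]{LT19} for the characterization), whereas you sketch the underlying BCHM arguments; your chamber-decomposition phrasing for rationality is a bit informal, but the mechanism is the right one and nothing essential is missing.
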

\begin{proof}
	For the first assertion, see \cite[Corollary 1.1.7]{BCHM10} when $L$ is ample
	and \cite[Theorem 2.16]{HTT15} when $L$ is big and nef.
	For the second assertion, see \cite[equation (3.2)]{LT19}.
\end{proof}

The $a$-invariant is a birational invariant and is preserved under field extension by the following proposition.
\begin{prop}\label{prop-propoties-of-a}
	Let $X$ be a projective variety defined over a field $F$ of characteristic zero and $L$ be a nef and big $\bQ$-divisor.
	\begin{enumerate}[ label={\rm(\arabic*)}]
		\item {(\rm\cite[Proposition 2.7]{HTT15})} Assume that $X$ is $\bQ$-factorial and has canonical singularities.
		Let $\beta:\widetilde{X}\rightarrow X$ be a birational morphism where $\widetilde{X}$ is smooth.
		Then $a(\widetilde{X},\beta^\ast L)=a(X,L)$.
		\item {\rm (\cite[Corollary 4.5]{LST})} Assume that $X$ is smooth.
		For any field extension $F^\prime/F$, let $X^\prime$ be any irreducible component of $X_{F^\prime}$.
		Then we have that $a(X^\prime,L)=a(X,L)$ and $\kappa(X^\prime,K_{X^\prime}+a(X^\prime, L)L)=\kappa(X,K_X+a(X, L)L)$.
	\end{enumerate}
\end{prop}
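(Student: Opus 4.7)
The plan is to handle the two parts separately, and in each case to exploit the characterization of the $a$-invariant via the pseudo-effective threshold, or equivalently via vanishing of $h^0(sK + rL)$ from Proposition \ref{prop-a}.

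For part (1), I would use the defining property of canonical singularities: for any resolution $\beta : \widetilde{X} \to X$, we may write $K_{\widetilde{X}} = \beta^{\ast} K_X + E$ with $E$ an effective $\beta$-exceptional $\mathbb{Q}$-divisor. For any $t \in \mathbb{R}$,
$$K_{\widetilde{X}} + t\,\beta^{\ast} L \;=\; \beta^{\ast}(K_X + tL) + E.$$
Effectiveness of $E$ immediately gives $a(\widetilde{X}, \beta^{\ast} L) \leq a(X, L)$, since pullback of a pseudo-effective class is pseudo-effective (we may replace $X$ by a small $\mathbb{Q}$-factorial resolution and then compute on $\widetilde X$). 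For the reverse inequality, I would push forward: if $K_{\widetilde{X}} + t\,\beta^{\ast} L$ lies in $\Eff^1(\widetilde{X})$, applying $\beta_{\ast}$ kills $E$ (exceptional) and returns $K_X + tL \in \Eff^1(X)$, where we use $\mathbb{Q}$-factoriality to make sense of the pseudo-effective threshold on $X$ through a resolution independent of choices. This is exactly the argument of \cite[Proposition 2.7]{HTT15}.

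For part (2), I would use the arithmetic characterization of $a(X,L)$ via $h^0(X, sK_X + rL)$ from Proposition \ref{prop-a}, together with flat base change for coherent cohomology. For a field extension $F'/F$ and an irreducible component $X'$ of $X_{F'}$, flat base change gives
$$H^0(X_{F'}, sK_{X_{F'}} + rL_{F'}) \;\cong\; H^0(X, sK_X + rL) \otimes_{F} F',$$
so the positivity of $h^0$ is preserved; decomposing $X_{F'}$ into its (Galois-conjugate, hence numerically equivalent) irreducible components then identifies $a(X', L|_{X'})$ with $a(X,L)$. For the Iitaka-dimension statement, I would apply the same flat base change reasoning to all multiples $m(K_X + a(X,L)L)$, using that the dimension of the image of $\phi_m$ is insensitive to extension of the base field.

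The main technical point I expect to need care with is part (2) when $X_{F'}$ becomes reducible: one must argue that the invariants of one component $X'$ genuinely agree with those of the whole base change, which uses Galois-transitivity of the components together with the fact that numerical and pseudo-effective data are Galois-stable. Part (1) is otherwise routine once the discrepancy formula $K_{\widetilde{X}} - \beta^{\ast}K_X \geq 0$ is in hand. In both cases a self-contained write-up would be short; since \cite{HTT15} and \cite{LST} contain the details, I would simply cite them and record the two statements as a proposition for later use.
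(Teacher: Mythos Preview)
Your proposal is correct in spirit, and your closing remark matches exactly what the paper does: the paper gives no proof at all for this proposition, simply citing \cite[Proposition 2.7]{HTT15} for (1) and \cite[Corollary 4.5]{LST} for (2). Your sketch of the underlying arguments (discrepancy formula plus pushforward for (1), flat base change for cohomology plus Galois transitivity on components for (2)) is the standard route and is consistent with those references, so there is nothing to correct.
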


We list two geometric properties of the $a$-invariant which will be repeatedly used.
\begin{prop}\label{prop-a-dominant} {\rm (\cite[Lemma 4.7]{LST})} 
	Let $f:Y\rightarrow X$ be a dominant generically finite morphism of projective varieties and $L$ be a big and nef $\bQ$-divisor on $X$.
	Then $a(Y,f^\ast L)\leq a(X,L)$.
\end{prop}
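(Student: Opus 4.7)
The plan is to reduce to a situation where both source and target are smooth and then exploit the ramification formula together with the fact that pulling back a pseudo-effective divisor by a dominant morphism yields a pseudo-effective divisor.

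First I would reduce to the smooth case. By Definition \ref{defi-a}, the $a$-invariant of a projective variety is defined by passing to a resolution, so I may pick resolutions $\beta_X\colon\widetilde X\to X$ and $\beta_Y\colon\widetilde Y\to Y$. By resolving the indeterminacy of $\beta_X^{-1}\circ f\circ \beta_Y$ (\eg\ by blowing up $\widetilde Y$ further using Hironaka), I obtain a smooth projective $\widetilde Y$ with a dominant generically finite morphism $\widetilde f\colon \widetilde Y\to \widetilde X$ fitting into a commutative square. Since $a(Y,f^{\ast}L)=a(\widetilde Y,\widetilde f^{\ast}\beta_X^{\ast}L)$ and $a(X,L)=a(\widetilde X,\beta_X^{\ast}L)$, it suffices to prove the inequality for $\widetilde f$ and the big-and-nef $\bQ$-divisor $\beta_X^{\ast}L$. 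Thus we may assume from the outset that $X$ and $Y$ are smooth.

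Next I would invoke the ramification formula. For a dominant generically finite morphism $f\colon Y\to X$ of smooth projective varieties, one has
\[
K_Y=f^{\ast}K_X+R,
\]
where $R$ is the (effective) ramification divisor. Now take any rational $t>a(X,L)$. Then $K_X+tL$ lies in $\Eff^1(X)$, so there exists an effective $\bQ$-divisor $E$ on $X$ with $K_X+tL\equiv E$. Pulling back by the dominant morphism $f$ preserves pseudo-effectivity (the pullback of an effective $\bQ$-divisor along a dominant morphism of smooth varieties is again effective), so $f^{\ast}(K_X+tL)\equiv f^{\ast}E\in\Eff^1(Y)$. Combining with the ramification formula,
\[
K_Y+t\,f^{\ast}L=f^{\ast}(K_X+tL)+R\equiv f^{\ast}E+R,
\]
which is effective, hence belongs to $\Eff^1(Y)$. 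Therefore $a(Y,f^{\ast}L)\leq t$; letting $t\downarrow a(X,L)$ yields the desired inequality.

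The step that requires the most care is the reduction to the smooth case and the verification that the ramification formula applies to the chosen $\widetilde f$; once that is set up, the heart of the argument is the short computation $K_Y+tf^{\ast}L=f^{\ast}(K_X+tL)+R$, which packages the role of the effective ramification divisor as exactly what makes the $a$-invariant non-increasing under dominant generically finite pullback. I would also remark that this monotonicity is sharp: equality can fail, \eg\ for étale covers $R=0$, so the argument produces equality, whereas for a cover with substantial ramification the $a$-invariant of $Y$ can strictly decrease.
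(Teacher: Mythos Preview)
Your argument is correct and is the standard proof of this fact. The paper itself does not supply a proof here; it simply cites \cite[Lemma~4.7]{LST}, so there is nothing to compare against beyond noting that your ramification-divisor argument is exactly the usual one.

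One small point of precision: when you take $t>a(X,L)$ and assert that $K_X+tL\equiv E$ for some \emph{effective} $\bQ$-divisor $E$, this needs a word of justification, since $\Eff^1(X)$ is defined as the \emph{closure} of the effective cone. The cleanest fix is to observe that $K_X+tL=(K_X+a(X,L)L)+(t-a(X,L))L$ is pseudo-effective plus a positive multiple of a big class, hence big, and a big $\bQ$-divisor always has an effective representative. Alternatively you can bypass this by arguing directly that pullback along a dominant morphism preserves the pseudo-effective cone (approximate by effective classes and use continuity of $f^\ast$ on $N^1$), which is in fact what you parenthetically suggest. Either way the conclusion stands.
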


See the beginning of Section \ref{sec-families} for the definition of a dominant family.
The first version of the following result was proved in \cite[Proposition 4.1]{LTT} where the base field was assumed to be algebraically closed.
\begin{prop}\label{prop-a-dominant-member}{\rm (\cite[Lemma 4.8]{LST})} 
	Let $X$ be a smooth projective variety and $L$ be a big and nef $\bQ$-divisor on $X$.
	Let $Y$ be a general member of a dominant family of subvarieties of $X$.
	Then $a(Y,L)\leq a(X,L)$.
\end{prop}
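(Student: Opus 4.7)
My plan is to produce an effective witness of pseudo-effectivity for $K_Y+a(X,L)\,L|_Y$ by restricting a divisor in the class $K_X+a(X,L)L$ to a general member $Y$, and absorbing the adjunction discrepancy into $\det N_{Y/X}$, whose effectivity will follow from the fact that $Y$ genuinely moves inside $X$.

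Set $a=a(X,L)$; I may assume $a>0$, since otherwise $K_X+aL$ is already pseudo-effective and the conclusion is immediate. By Proposition \ref{prop-a}, there is an effective $\bQ$-divisor $D\equiv_\bQ K_X+aL$ on $X$. Let $p:\cU\to W$ be the parameter family, $s:\cU\to X$ the dominant evaluation; after resolving singularities and shrinking $W$ to a dense open, I may assume $\cU$ and $W$ are smooth, $p$ is smooth, and $s|_Y$ is a closed embedding for general fibers $Y=\cU_w$. Such a general $Y$ is not contained in $\Supp(D)$, so $D|_Y$ is an effective $\bQ$-divisor on $Y$. Adjunction for the smooth subvariety $Y\subset X$ reads $K_Y=K_X|_Y+\det N_{Y/X}$, so
$$K_Y+aL|_Y\;\equiv_\bQ\;D|_Y+\det N_{Y/X}.$$

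It therefore suffices to show $\det N_{Y/X}$ is effective on general $Y$. For general $u\in\cU$ lying over $w\in W$, surjectivity of $ds_u:T_{\cU,u}\to T_{X,s(u)}$ (the infinitesimal form of dominance of $s$, automatic in characteristic zero by generic smoothness) combined with the short exact sequence $0\to T_Y\to T_\cU|_Y\to T_{W,w}\otimes\cO_Y\to 0$ forces the Kodaira--Spencer map $T_{W,w}\otimes\cO_Y\to N_{Y/X}$ to be surjective at a general point of $Y$. Since the family is dominant, $\dim W\geq r:=\mathrm{codim}_X Y=\mathrm{rank}\,N_{Y/X}$, so taking the $r$-th exterior power produces a nonzero map from the trivial line bundle $\bigwedge^r T_{W,w}\otimes\cO_Y$ to $\det N_{Y/X}$, hence a nonzero global section of $\det N_{Y/X}$. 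Thus $\det N_{Y/X}$ is effective, the right-hand side above is effective, and $K_Y+aL|_Y$ is pseudo-effective, giving $a(Y,L|_Y)\leq a$.

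The main technical step is upgrading the pointwise surjectivity of the Kodaira--Spencer map to generic surjectivity of a map of coherent sheaves; this needs a judicious choice of general $u$ and $w$ together with smoothness of $p$ and generic smoothness of $s$, the latter of which is where characteristic zero enters. A secondary issue is that a general member $Y$ may only be defined over $\overline{F}$, but this is harmless by Proposition \ref{prop-propoties-of-a}(2), which lets me carry out the geometric argument over $\overline{F}$ and descend.
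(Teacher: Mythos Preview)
Your argument works when the general member $Y$ is smooth, but the reduction ``after resolving singularities\ldots I may assume\ldots $s|_Y$ is a closed embedding for general fibers $Y=\cU_w$'' is not justified. Resolving $\cU$ replaces the fibers by their resolutions, and there is no reason the original general member need be smooth (e.g.\ the family of singular quadric cones dominating $\bP^3$). Once $s|_Y$ is merely birational onto its image, the adjunction formula $K_Y=K_X|_Y+\det N_{Y/X}$ and the very object $N_{Y/X}$ are unavailable, so the heart of your computation collapses.

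The standard fix, which is what the cited references do, is to run the argument on the total space rather than on $Y$. After resolving so that $\widetilde{\cU}$ and $W$ are smooth and $p$ is smooth, cut $W$ by $\dim W-r$ general hyperplanes through a general point $w$ so that $s:\widetilde{\cU}\to X$ becomes dominant and generically finite; Riemann--Hurwitz then gives $K_{\widetilde{\cU}}=s^*K_X+R$ with $R\geq 0$. A smooth fiber $\widetilde{Y}$ has trivial normal bundle in $\widetilde{\cU}$, so $K_{\widetilde{Y}}=K_{\widetilde{\cU}}|_{\widetilde{Y}}$, and restricting $s^*D+R$ to a general $\widetilde{Y}$ produces an effective divisor in $|K_{\widetilde{Y}}+a\,s^*L|_{\widetilde{Y}}|$. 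Your Kodaira--Spencer computation of $\det N_{Y/X}$ is precisely the specialization of $R|_{\widetilde{Y}}$ to the smooth-fiber case, so the idea is right but the generality is not. Two smaller points: the sentence ``otherwise $K_X+aL$ is already pseudo-effective and the conclusion is immediate'' is vacuous, since $K_X+aL$ is pseudo-effective by definition for every $a$; the case $a\leq 0$ (or more generally the passage from pseudo-effective to effective) is handled by approximating with $K_X+(a+\epsilon)L$. And $\bigwedge^r T_{W,w}$ is not a line unless $\dim W=r$, though this does not affect the conclusion that $\det N_{Y/X}$ has a nonzero section.
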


Varieties with large $a$-values have been classified in \cite{fujita1989remarks} (see also \cite[Table 7.1]{beltrametti2011adjunction} and \cite{liu2022note}).
\begin{prop}\label{prop-large-a} {\rm (\cite[Theorem 1.3]{liu2022note})}
	Let $X$ be a normal projective variety of dimension $n$ over a field of characteristic $0$ and let $L$ be an ample divisor on $X$.
	If $a(X,L)> n-1$, then either
	\begin{enumerate}[ label={\rm(\arabic*)}]
		\item $(X,L)\cong (\bP^n,\cO(1))$; or
		
		
		\item a smooth resolution of $(X,L)$ is isomorphic to a $\bP^{n-1}$-bundle over a smooth curve $C$ with polarization $\cO(1)$; or
		\item a smooth resolution of $(X,L)$ is isomorphic to $(\bP_{\bP^2}(\cO^{\oplus(n-2)}\oplus\cO(2)),\cO(1))$.
	\end{enumerate}
		%
		%
\end{prop}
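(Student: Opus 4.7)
The plan is to translate the $a$-invariant hypothesis into a non-pseudo-effectivity statement and then apply the minimal model program together with adjunction-theoretic classification. By Definition \ref{defi-a} and Proposition \ref{prop-propoties-of-a}(1), the condition $a(X,L) > n-1$ is equivalent to $K_{\widetilde X} + (n-1)\beta^\ast L \notin \Eff^1(\widetilde X)$ for a smooth resolution $\beta \colon \widetilde X \to X$. In particular this adjoint divisor is not nef, so after passing to a terminal model (which does not change the $a$-invariant) one may run a $K$-MMP with scaling of $\beta^\ast L$, terminating in a Mori fiber space $\varphi \colon X' \to Z$; every extremal ray $R$ encountered along the way satisfies $(K + (n-1)L)\cdot R < 0$, which combined with $L \cdot R \geq 1$ gives length $\ell(R) = -K \cdot R > n-1$.

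Because of the large length, the Ionescu--Wi\'sniewski length inequality forces the general fiber $F$ of $\varphi$ to satisfy $\dim F \geq \ell(R) - 1 \geq n - 2$, so $\dim Z \leq 2$; moreover $F$ is a Fano variety of Picard rank one whose Fano index measured against $L|_F$ exceeds $n-1$, and by the Kobayashi--Ochiai characterization of projective space we obtain $F \cong \bP^{\dim F}$. The case $\dim Z = 0$ gives $(X', L') \cong (\bP^n, \cO(1))$, yielding case (1). The case $\dim Z = 1$ gives a $\bP^{n-1}$-bundle $X' \to C$ over a smooth curve with $L$ the relative tautological class, yielding case (2). The case $\dim Z = 2$ gives a $\bP^{n-2}$-bundle $X' \to S$ over a smooth surface; an application of Proposition \ref{prop-a-dominant} combined with the induction hypothesis applied to the polarized pair $(S, H)$ forces $S \cong \bP^2$, and a direct intersection-theoretic computation of $a(\bP(\cE), \cO_{\bP(\cE)}(1))$ for each splitting type $\cE = \bigoplus \cO_{\bP^2}(a_i)$ on $\bP^2$ singles out $\cE \cong \cO^{\oplus(n-2)} \oplus \cO(2)$ as the only splitting producing $a > n-1$, giving case (3).

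The principal obstacle lies in two places. First, one must verify that the birational modifications in the MMP do not destroy the projective-bundle structure on the Mori fiber space output, so that it pulls back to a smooth birational model of $X$; this uses rigidity of relative Picard rank-one Mori contractions whose fibers are $\bP^{k}$, together with the fact that small modifications cannot alter such highly constrained Fano-fibered geometry. Second, the $\dim Z = 2$ case is the most delicate: reducing to $S \cong \bP^2$ requires careful tracking of the induced polarization on the base (the pushforward $\varphi_\ast \cO_{X'}(1)^{\otimes k}$ for $k \gg 0$), and the intersection-theoretic computation singling out the precise splitting $\cO^{\oplus(n-2)} \oplus \cO(2)$ among all scrolls must rule out, for instance, $\cO^{\oplus(n-3)} \oplus \cO(1)^{\oplus 2}$ and other candidate splittings with comparable Chern classes. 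This last step is essentially a finite Segre-class computation but is where the specific arithmetic shape of case (3) emerges.
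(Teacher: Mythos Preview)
The paper does not prove Proposition~\ref{prop-large-a}; it is quoted verbatim from \cite[Theorem 1.3]{liu2022note} (with antecedents in Fujita's classification), and no argument is given in the present paper. So there is no ``paper's own proof'' against which to compare your proposal.

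That said, your outline follows the standard strategy in the adjunction-theoretic literature (Fujita, Ionescu--Wi\'sniewski, Andreatta, and more recently Liu), and is broadly correct in spirit. A few points deserve care if you want to turn the sketch into a proof. First, the assertion that \emph{every} extremal ray contracted in the MMP satisfies $(K+(n-1)L)\cdot R<0$ is not automatic from merely running a $K$-MMP with scaling; you need either to run a $(K+(n-1)L)$-MMP directly (which requires writing $(n-1)L$ as a boundary, e.g.\ via a general member argument) or to argue that $\beta^\ast L$ remains big and nef through the steps so that the scaling parameter stays above $n-1$ until the fiber space is reached. Second, to apply Kobayashi--Ochiai to the general fiber $F$ you need the index of $F$ against $L|_F$ to exceed $\dim F$, not merely $n-1$; when $\dim F=n-2$ this is a separate check. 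Third, in the $\dim Z=2$ case your reduction to split bundles on $\bP^2$ silently assumes the vector bundle $\cE$ splits, which is false in general on $\bP^2$; one must either argue that the large-length condition forces a section of the right twist (hence a splitting off of a trivial summand, inductively), or appeal to the explicit classification in the cited sources. These are exactly the places where the cited papers do the real work.
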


The $b$-invariant is defined as follows.
\begin{defi}
	Let $X$ be a projective variety over a field $F$ of characteristic $0$ and let $L$ be a big and nef $\bQ$-divisor.
	If $X$ is smooth, we define
	$$\cF_{X,L}:=\{\alpha\in\Nef_1(X) \mid (K_X+a(X,L)L)\cdot\alpha=0\},$$
	and the $b$-invariant of $X$ is defined to be
	$$b(F,X,L):=\dim \cF_{X,L}.$$
	If $X$ is not smooth, we take a smooth resolution $\beta:\widetilde{X}\rightarrow X$ and define $b(F,X,L):=b(F,\widetilde{X},\beta^\ast L)$.
	We may simply denote $b(X,L)$ when $F$ is explicit.
\end{defi}

\begin{rema}\label{rema-b-is-birational}
	The definition is independent of $\beta$ by \cite[Propostion 2.10]{HTT15}.
	By the same Proposition, the $b$-invariant is a birational invariant.
	This definition of $b$-invariant is equivalent to the one in \cite[Definition 2.8]{HTT15}, see the paragraph after \cite[Definition 4.1]{LT19} for an explanation.
\end{rema}

\begin{exam}
	Let $X$ be a smooth Fano variety over a field $F$ and set $L=-K_X$. Then $b(F,X,L)$ equals to the Picard rank of $X$.
\end{exam}

The $b$-invariant does not enjoy so much geometric properties as the $a$-invariant, and it is not stable under field extension; but it can only increase.

\begin{prop}\label{prop-properties-of-b}{\rm (\cite[Proposition 4.13]{LST})}
	Let $X$ be a geometrically integral smooth projective variety defined over a field $F$ of characteristic $0$ and $L$ be a big and nef $\bQ$-divisor on $X$.
	Then for any finite field extension $F^\prime /F$, we have that
	$$b(F,X,L)\leq b(F^\prime,X_{F^\prime},L_{F^\prime}).$$
\end{prop}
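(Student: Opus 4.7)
The plan is to exhibit an injective $\bR$-linear map $\pi^{\ast}\colon \cF_{X,L}\hookrightarrow \cF_{X_{F'},L_{F'}}$ arising from the base change $\pi\colon X_{F'}\to X$. Since $F$ has characteristic zero, the extension $F'/F$ is separable, so $\pi$ is finite \'etale of degree $d:=[F':F]$. In particular $X_{F'}$ is smooth (and integral, since $X$ is geometrically integral by assumption), and $\pi$ is flat, so there is both a proper pushforward $\pi_{\ast}$ and a flat pullback $\pi^{\ast}$ on $1$-cycles and on divisors, each descending to numerical equivalence classes, and satisfying the projection formula together with the identity $\pi_{\ast}\pi^{\ast}=d\cdot\mathrm{id}$ on $N_{1}$.

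I would then proceed in three short steps. \textbf{Step 1.} Pullback preserves nefness of $1$-cycles: if $\alpha\in\Nef_{1}(X)$ and $D'$ is effective on $X_{F'}$, then $\pi_{\ast}D'$ is effective on $X$, so by the projection formula and the degree-preservation of proper pushforward of $0$-cycles one has $D'\cdot\pi^{\ast}\alpha=\pi_{\ast}D'\cdot\alpha\geq 0$. Hence $\pi^{\ast}\alpha\in\Nef_{1}(X_{F'})$. \textbf{Step 2.} Since $\pi$ is \'etale we have $K_{X_{F'}}=\pi^{\ast}K_{X}$, and by Proposition \ref{prop-propoties-of-a}(2), $a(X_{F'},L_{F'})=a(X,L)$; hence $K_{X_{F'}}+a(X_{F'},L_{F'})L_{F'}=\pi^{\ast}(K_{X}+a(X,L)L)$. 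Combined with the identity $\pi^{\ast}D\cdot\pi^{\ast}\alpha=d\,(D\cdot\alpha)$ (an immediate consequence of the projection formula and $\pi_{\ast}\pi^{\ast}=d\cdot\mathrm{id}$), this shows that if $(K_{X}+a(X,L)L)\cdot\alpha=0$, then $(K_{X_{F'}}+a(X_{F'},L_{F'})L_{F'})\cdot\pi^{\ast}\alpha=0$, so $\pi^{\ast}$ sends $\cF_{X,L}$ into $\cF_{X_{F'},L_{F'}}$. \textbf{Step 3.} The relation $\pi_{\ast}\pi^{\ast}=d\cdot\mathrm{id}$ with $d>0$ forces $\pi^{\ast}$ to be injective on all of $N_{1}(X)$, hence on the linear span of $\cF_{X,L}$.

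Putting these together yields $b(F,X,L)=\dim\cF_{X,L}\leq\dim\cF_{X_{F'},L_{F'}}=b(F',X_{F'},L_{F'})$, which is what we want. The main point requiring care, and the step I regard as the principal obstacle, is setting up flat pullback on numerical $1$-cycle groups over a non-algebraically-closed base field and confirming that the projection formula together with degree-preservation under proper pushforward of $0$-cycles delivers the intersection identity $\pi^{\ast}D\cdot\pi^{\ast}\alpha=d\,(D\cdot\alpha)$; once that framework is in place, the rest of the proof reduces to a short chain of projection-formula manipulations together with the invariance of the $a$-invariant under base change.
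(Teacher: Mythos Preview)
The paper does not give its own proof of this proposition; it is stated with a citation to \cite[Proposition 4.13]{LST} and used as a black box. So there is nothing in the paper to compare your argument against.

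Your proof is correct in outline and essentially complete. One small remark: the identity you write as $\pi^{\ast}D\cdot\pi^{\ast}\alpha=d\,(D\cdot\alpha)$ depends on whether you compute intersection numbers on $X_{F'}$ over $F$ or over $F'$; with the over-$F'$ convention (which is the natural one for computing $b(F',X_{F'},L_{F'})$) the factor of $d$ disappears and one simply has $\pi^{\ast}D\cdot\pi^{\ast}\alpha=D\cdot\alpha$. Likewise in Step~1 the equality $D'\cdot\pi^{\ast}\alpha=\pi_{\ast}D'\cdot\alpha$ is off by the same factor. None of this affects the argument, since you only use these identities to transport vanishing and non-negativity conditions, and the factor $d>0$ is harmless either way. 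The point you flag as the principal obstacle (that $\pi^{\ast}$ descends to $N_1$ and the projection formula holds in this setting) is indeed routine once one unwinds the definitions, so your proof stands.
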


The notion of face-contracting morphisms, first defined in \cite[Definition 3.6]{GMC}, is crucial for ensuring that the geometric exceptional set is contained in a thin set.
In this paper, this notion appears only in the definition of the geometric exceptional set (Definition \ref{defi-ges}).
\begin{defi}
	Let $X,Y$ be smooth projective varieties and $L$ be a big and nef $\bQ$-divisor on $X$.
	Let $f:Y\rightarrow X$ be a dominant generically finite morphism with $a(Y,f^\ast L)=a(X,L)$.
	Then the pushforward $f_\ast: N_1(Y)\rightarrow N_1(X)$ maps $\cF_{Y,f^\ast L}$ into $\cF_{X,L}$ \cite[Lemma 4.25]{LST}, and we say that $f$ is a \textit{face-contracting} morphism if $f_\ast:\cF_{Y,f^\ast L}\rightarrow \cF_{X,L}$ is not injective.
\end{defi}

\subsection{Adjoint rigid varieties}\label{sec-adjoint-rigid}

Let $X$ be a smooth projective variety over a field $F$ of characteristic $0$ and $L$ be a big and nef $\bQ$-divisor.
Let $d$ be a positive integer such that $d\Delta$ is integral for the divisor $\Delta$ mentioned in Remark \ref{rema-bchm}.
Then we can define a graded ring
$$R(X,K_X+a(X,L)L):=\bigoplus_{m\geq0}H^0(X,md(K_X+a(X,L)L)),$$
which is known as the section ring associated to $(X,a(X,L)L)$.
By \cite[Theorem 1.2]{BCHM10}, the ring $R(X,K_X+a(X,L)L)$ is a finite generated $F$-algebra (see Remark \ref{rema-bchm}), and thus defines a rational map
$$\phi:X\dasharrow Z:=\Proj\,R(X,K_X+a(X,L)L).$$
We say the pair $(X,L)$ is \textit{adjoint rigid} if the image of $\phi$ is of dimension $0$.

\begin{rema}\label{rema-bchm}
	As in the proof of \cite[Theorem 2.3]{LTT}, there exists a $\bQ$-divisor $\Delta$ that is $\bQ$-equivalent to $a(X,L)L$, such that $(X,\Delta)$ is terminal and, therefore, klt.
	Then one can use \cite[Theorem 1.2]{BCHM10} to conclude that $R(X,K_X+a(X,L)L)$ is finitely generated.
\end{rema}

\begin{rema}\label{rema-rationally-conn}
	It follows from \cite{HM07} that a geometrically integral adjoint rigid variety is geometrically rationally connected.
\end{rema}

The notion of adjoint rigidity is useful in the study of geometric exceptional set.
Roughly speaking, when studying the $a$- and $b$-invariants of a variety, we can reduce the problem to studying its adjoint rigid subvarieties, by the following proposition.
\begin{prop}\label{prop-adjoint-rigid}{\rm (\cite[Lemma 4.6]{LTDuke}, \cite[Lemma 4.3]{LST})} 
	Let $Y$ be a smooth uniruled projective variety defined over a field of characteristic $0$ and $L$ be a big and nef $\bQ$-divisor.
	Suppose that $(Y,L)$ is not adjoint rigid and
	let $g:Y\dashrightarrow Z$ be the rational map associated to $(Y, K_Y+a(Y, L)L)$.
	Let $E$ be the closure of a general fiber of $g$ in $Y$.
	Then
	$$a(E,L|_E)=a(Y,L),\quad b(\overline{F},E,L|_E)\geq b(\overline{F},Y,L),$$
	and $(E,L|_E)$ is adjoint rigid.
\end{prop}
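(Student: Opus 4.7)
The plan is to pass to a smooth birational model where the Iitaka fibration of $K_Y+a(Y,L)L$ becomes a morphism, verify on a general fiber that the $a$-invariant and adjoint rigidity descend as predicted, and then use the N\'eron--Severi structure of this fibration to obtain the $b$-inequality. Uniruledness of $Y$ and Proposition \ref{prop-a} give $a(Y,L)>0$, so by BCHM (Remark \ref{rema-bchm}) the section ring is finitely generated, and since $(Y,L)$ is not adjoint rigid, $\dim Z\geq 1$. Take $\pi:\widetilde Y\to Y$ birational with $\widetilde Y$ smooth projective such that $\widetilde g:=g\circ\pi:\widetilde Y\to\widetilde Z$ is a surjective morphism with connected fibers onto a smooth projective $\widetilde Z$ birational to $Z$. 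A general fiber $\widetilde E$ of $\widetilde g$ is smooth, and $\pi|_{\widetilde E}$ is birational onto a general fiber $E$ of $g$. By birational invariance of $a$ (Proposition \ref{prop-propoties-of-a}(1)), $b$ (Remark \ref{rema-b-is-birational}), and adjoint rigidity, it suffices to establish all three claims for $(\widetilde E,\pi^*L|_{\widetilde E})$.

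\emph{$a$-invariant and adjoint rigidity.} By the construction of the Iitaka fibration, $\kappa((K_{\widetilde Y}+a(Y,L)\pi^*L)|_{\widetilde E})=0$. Since the normal bundle of a general fiber $\widetilde E$ in $\widetilde Y$ is trivial, adjunction yields $K_{\widetilde Y}|_{\widetilde E}=K_{\widetilde E}$, and hence $\kappa(\widetilde E,K_{\widetilde E}+a(Y,L)L|_{\widetilde E})=0$. The restriction $L|_{\widetilde E}$ is nef; for bigness, apply Kodaira's lemma to write $\pi^*L\sim_{\bQ}A+D$ with $A$ ample and $D$ effective, and use that a general $\widetilde E$ avoids $\Supp(D)$ to conclude $L|_{\widetilde E}\sim_{\bQ}A|_{\widetilde E}+D|_{\widetilde E}$ is big. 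Proposition \ref{prop-a-dominant-member} then gives $a(\widetilde E,L|_{\widetilde E})\leq a(Y,L)$; if the inequality were strict, then $K_{\widetilde E}+a(Y,L)L|_{\widetilde E}$ would decompose as a pseudo-effective divisor plus a positive multiple of the big divisor $L|_{\widetilde E}$, hence would itself be big, contradicting Iitaka dimension $0$. So $a(\widetilde E,L|_{\widetilde E})=a(Y,L)$, and the same Iitaka-dimension-zero computation shows that $(\widetilde E,L|_{\widetilde E})$ is adjoint rigid.

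\emph{$b$-inequality and main obstacle.} Base-change to $\overline F$ and invoke the characterization (Remark \ref{rema-b-is-birational}) of $b$ as the codimension of the minimal supported face of $\Eff^1$ containing $K+aL$. Finite generation of the section ring implies that a multiple of $K_{\widetilde Y}+a(Y,L)\pi^*L$ can be written as $\widetilde g^*A+E_0$ with $A$ big on $\widetilde Z$ and $E_0$ effective on $\widetilde Y$; bigness of $A$ then permits adding $\pm\varepsilon\widetilde g^*\beta$ for any $\beta\in N^1(\widetilde Z)$ to $K+aL$ while remaining pseudo-effective, so the (injective) pullback $\widetilde g^*N^1(\widetilde Z)$ lies inside the linear span of this minimal face on $\widetilde Y$. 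Combining this with the facts that $\iota^*:N^1(\widetilde Y)\to N^1(\widetilde E)$ kills $\widetilde g^*N^1(\widetilde Z)$ (restriction along a constant map vanishes) and that $\rho(\widetilde Y)\leq\rho(\widetilde Z)+\rho(\widetilde E)$, a careful dimension count delivers $b(\overline F,E,L|_E)\geq b(\overline F,Y,L)$. The main obstacle is precisely this dimension count: one must track how the minimal pseudo-effective face interacts with both the Iitaka fibration and restriction to a general fiber, and the crucial technical input is the finite generation of the section ring supplied by BCHM, as worked out in \cite[Lemma 4.6]{LTDuke} and \cite[Lemma 4.3]{LST}.
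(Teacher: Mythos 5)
The paper's own proof of this proposition is simply a citation: the $a$-invariant and adjoint rigidity statements are attributed to \cite[Lemma 4.3]{LST}, and the $b$-inequality to \cite[Lemma 4.6]{LTDuke}. Your write-up unpacks the citations; the sketch for the $a$-invariant and adjoint rigidity is sound — pass to a log resolution where the Iitaka fibration is a morphism, use trivial normal bundle of a general fiber plus adjunction to identify $K_{\widetilde Y}|_{\widetilde E}$ with $K_{\widetilde E}$, observe that $\kappa(\widetilde E,(K_{\widetilde Y}+a\pi^*L)|_{\widetilde E})=0$, get $\leq$ from Proposition~\ref{prop-a-dominant-member}, and rule out $<$ since $\dim Z\geq 1$ forces $\dim\widetilde E<\dim Y$ so $K_{\widetilde E}+aL|_{\widetilde E}$ cannot be big. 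One small imprecision: a general fiber need not \emph{avoid} $\Supp D$; what you need, and what is true, is that it is not \emph{contained} in $\Supp D$, so $D|_{\widetilde E}$ is well-defined and effective.

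Your sketch of the $b$-inequality has a genuine gap. The inequality $\rho(\widetilde Y)\leq\rho(\widetilde Z)+\rho(\widetilde E)$ is false in general for a fibration $\widetilde g:\widetilde Y\to\widetilde Z$ with general fiber $\widetilde E$: for instance, blowing up a point of $\widetilde Z\times\widetilde E$ already violates it. The actual mechanism in \cite[Lemma 4.6]{LTDuke} does not compare Picard ranks this way; rather it tracks the restriction map $N^1(\widetilde Y)\to N^1(\widetilde E)$ and relates the minimal face containing $K+aL$ on $\widetilde Y$ to the one on $\widetilde E$, using that the kernel of restriction contains not just $\widetilde g^*N^1(\widetilde Z)$ but also all divisor classes supported over proper subvarieties of $\widetilde Z$ (these also lie in the span of the minimal face, which is the point you would need). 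Since you ultimately defer this to the reference, as the paper itself does, the overall logic is not broken; but the intermediate claim as written is incorrect and should be replaced by the correct reduction to the image of the restriction map.
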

\begin{proof}
	The conclusion on $a$-invariants and adjoint rigidity is proved in \cite[Lemma 4.3]{LST} over an arbitrary field of characteristic $0$, and the conclusion on $b$-invariants is proved in \cite[Lemma 4.6]{LTDuke} over an algebraically closed field of characteristic $0$.
\end{proof}
In the above proposition, the fiber $E$ is projective and connected.
By generic smoothness (\cite[Theorem 21.6.4]{vakil}, the general fiber $E$ is irreducible.
Thus the $a$- and $b$-invariants are well-defined for $E$.

Let $\phi$ be a rational map.
We say that a rational map $X^\prime\ra Z^\prime$ is \textit{birationally equivalent} to $\phi$ if there is a commutative diagram
\[\begin{tikzcd}
	{X^\prime} & X \\
	{Z^\prime} & Z
	\arrow["u", dashed, from=1-1, to=1-2]
	\arrow[dashed, from=1-1, to=2-1]
	\arrow["\phi", dashed, from=1-2, to=2-2]
	\arrow["v"', dashed, from=2-1, to=2-2]
\end{tikzcd}\]
for some birational rational maps $u$ and $v$.

Over an arbitrary field of characteristic $0$, the following proposition characterizes the $b$-invariant in the situation of Proposition \ref{prop-adjoint-rigid}.

\begin{prop}\label{prop-b-inv-and-monodromy}{\rm (\cite[Lemma 4.15]{LST})} 
	Let $X$ be a geometrically uniruled and geometrically integral smooth projective variety defined over a field of characteristic $0$ and $L$ be a big and nef $\bQ$-divisor on $X$.
	Let $\pi:X\dashrightarrow W$ be a rational map.
	Suppose that there exists a non-empty open subset $W^\circ\subset W$, such that
	\begin{enumerate}[ label={\rm(\arabic*)}]
		\item $W^\circ$ is smooth; and
		\item $f$ is defined and smooth over $W^\circ$; and
		\item $a(X_w,L|_{X_w})=a(X,L)$ for any closed point $w\in W^\circ$.
		\item $\pi$ is birationally equivalent to the rational map associated to $(X, K_X+a(X, L)L)$.
	\end{enumerate}
	Then for any closed point $w\in W^\circ$ and a geometric point $\overline{w}$ above $w$, we have that
	$$b(F,X,L)=\dim \left( \left(N^1(X_w)\cap N^1(\overline{X}_{\overline{w}})^{\pi_1^{\et}(\overline{W^\circ},\overline{w})} \right)/\Span (\{ E_i \}^r_{i=1})\right),$$
	where $\{ E_i \}^r_{i=1}$ is the finite set of irreducible divisors which  dominates $W$ under $\pi$ and which satisfy $K_X+a(X,L)L-c_i E_i\in \Eff^1(X)$ for some $c_i>0$.
\end{prop}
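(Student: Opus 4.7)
The plan is to proceed in three stages: reducing to the case where $\pi$ is a morphism, computing the face $\cF_{\overline{X},L_{\overline{F}}}$ via the adjoint rigid fiber over $\overline{F}$, and then descending to $F$ via monodromy.

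First, by the birational invariance of the $b$-invariant (Remark~\ref{rema-b-is-birational}), I would resolve the indeterminacy of $\pi$ by a sequence of blow-ups to obtain a smooth birational model on which $\pi:X\to W$ is a genuine morphism, smooth over $W^\circ$ (possibly after shrinking $W^\circ$). The list $\{E_i\}$ and the pseudoeffectivity of $K_X+a(X,L)L-c_iE_i$ are preserved up to strict transforms, and neither side of the claimed equality is changed.

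Second, working over $\overline{F}$, condition (4) together with Proposition~\ref{prop-adjoint-rigid} forces $(X_w, L|_{X_w})$ to be adjoint rigid for general $w$, so some positive multiple of $K_{X_w}+a(X,L)L|_{X_w}$ is $\bQ$-linearly equivalent to a rigid effective divisor. Combining this with the pseudoeffectivity of each $K_X+a(X,L)L-c_iE_i$, I would argue that $K_X+a(X,L)L$ admits a numerical decomposition of the form $V+\sum c_iE_i$, where $V$ is effective and $\pi$-vertical over $W^\circ$. It follows that a nef class $\alpha\in \mathrm{Nef}_1(\overline{X})$ lies in $\cF_{\overline{X},L_{\overline{F}}}$ if and only if $\alpha$ is supported in a single geometric fiber $\overline{X}_{\overline{w}}$ and, viewed in $N_1(\overline{X}_{\overline{w}})$, is orthogonal to every restriction $E_i|_{\overline{X}_{\overline{w}}}$. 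Dualizing on the smooth projective fiber then yields
$$\dim \cF_{\overline{X}, L_{\overline{F}}} = \dim \bigl(N^1(\overline{X}_{\overline{w}})/\Span(\{E_i|_{\overline{X}_{\overline{w}}}\})\bigr).$$

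Third, to descend from $\overline{F}$ to $F$, I would use the smoothness of $\pi$ over $W^\circ$ to obtain a restriction map $N^1(X)\to N^1(X_w)\hookrightarrow N^1(\overline{X}_{\overline{w}})$ whose image lies inside the $\pi_1^{\et}(\overline{W^\circ},\overline{w})$-invariants, via the specialization formalism for Néron--Severi groups in smooth proper families; conversely, every monodromy-invariant class descends to $N^1(X_w)$ and lifts to $N^1(X)$. Intersecting this description with the quotient identification from Step 2, and combining it with Proposition~\ref{prop-properties-of-b} to translate between the face dimensions over $F$ and $\overline{F}$, yields the stated formula. The main obstacle lies in Step 2: isolating the horizontal rigid contributions $E_i$ from the $\pi$-vertical effective part of $K_X+a(X,L)L$. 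One must verify that every horizontal divisor appearing with positive coefficient in some effective representative of a positive multiple of $K_X+a(X,L)L$ is among the $\{E_i\}$, and that intersection with the $E_i$ captures precisely the obstructions preventing a nef class on the fiber from extending to a class in $\cF_{\overline{X}, L_{\overline{F}}}$. The monodromy descent in Step 3 is comparatively standard given smooth base change of the Néron--Severi group in a smooth proper family.
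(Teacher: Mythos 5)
The paper does not supply its own proof of this proposition: it is stated verbatim with the citation \cite[Lemma 4.15]{LST} and invoked elsewhere (notably in the proof of Theorem~\ref{theo-monodromy-final}) as a black box. So there is no in-paper argument to compare your sketch against, and I can only assess your outline on its own merits.

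Your overall three-stage structure (resolve indeterminacy, compute the face over $\overline F$ via the fibration, descend to $F$) is sensible, and Step~1 is fine given the birational invariance of the $b$-invariant. But Step~2, which you yourself flag as the crux, contains a genuine gap. You write that over $\overline F$ one obtains
$\dim \cF_{\overline{X}, L_{\overline{F}}} = \dim \bigl(N^1(\overline{X}_{\overline{w}})/\Span(\{E_i|_{\overline{X}_{\overline{w}}}\})\bigr)$,
with no reference to $\pi_1^{\et}(\overline{W^\circ},\overline{w})$. That cannot be right: the restriction $N^1(\overline X)\to N^1(\overline X_{\overline w})$ lands in, and for a very general $\overline w$ surjects onto, the monodromy-invariant subspace $N^1(\overline X_{\overline w})^{\piet(\overline{W^\circ},\overline w)}$, which is typically a proper subspace of $N^1(\overline X_{\overline w})$. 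Specializing the claimed formula to $F=\overline F$ (where $N^1(X_w)\cap N^1(\overline X_{\overline w})^{\piet}=N^1(\overline X_{\overline w})^{\piet}$) shows the correct answer must involve the monodromy invariants. In other words, geometric monodromy already enters when computing the $b$-invariant over an algebraically closed field; it is not an artifact of the $F$ versus $\overline F$ descent you carry out in Step~3. Your Step~3 should therefore only be responsible for the Galois part of the intersection $N^1(X_w)\cap N^1(\overline X_{\overline w})^{\piet}$, while the $\piet$-invariance must already appear in Step~2.

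A secondary issue in Step~2: the assertion that a nef class $\alpha\in\Nef_1(\overline X)$ lying in $\cF_{\overline X,L}$ ``is supported in a single geometric fiber and, viewed in $N_1(\overline X_{\overline w})$, is orthogonal to the $E_i|_{\overline X_{\overline w}}$'' conflates numerical classes with cycles (nef classes are not supported anywhere) and uses a map $N_1(\overline X)\to N_1(\overline X_{\overline w})$ that does not exist; the natural map goes the other way, by pushforward. What one wants to show is that $\cF_{\overline X,L}$ is the image of a suitable face of $\Nef_1(\overline X_{\overline w})$ under pushforward, and then dualize via the restriction map $N^1(\overline X)\to N^1(\overline X_{\overline w})$ together with the minimal face of $\Eff^1(\overline X)$ containing $K_{\overline X}+a(X,L)L_{\overline F}$, whose horizontal part is spanned by the $E_i$. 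Making this precise, and in particular verifying that the vertical effective components contribute nothing beyond the fiber identification, is exactly where the monodromy-invariant subspace must appear, and your sketch leaves that step unresolved.
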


The following proposition says that in the situation of Proposition \ref{prop-adjoint-rigid}, the adjoint rigidity can be passed to the image.
\begin{prop}\label{prop-adjoint-rigid-2}{\rm (\cite[Lemma 4.9]{LST})} 
	Let $f:Y\rightarrow X$ be a generically finite surjection of smooth projective varieties over a field $F$ of characteristic $0$ and $L$ be a big and nef $\bQ$-divisor on $X$ with $a(Y,f^\ast L)=a(X,L)$.
	Let $S$ be the image of the closure of a general fiber of the rational map associated to $(X,K_X+a(X, f^\ast L)f^\ast L)$.
	Then $(S,L|_S)$ is adjoint rigid with $a(S,L|_S)=a(X,L)$.
\end{prop}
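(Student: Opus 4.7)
The plan is to reduce, via Proposition~\ref{prop-adjoint-rigid} applied to $(Y,f^\ast L)$, to a statement about the restriction $f|_{\overline{E}}\colon\overline{E}\to S$, where $E$ is a general fiber of the adjoint Iitaka fibration on $Y$. I interpret the rational map in the statement as $\phi\colon Y\dashrightarrow W$ associated to $(Y,K_Y+a(Y,f^\ast L)f^\ast L)$ (well-defined since $a(Y,f^\ast L)=a(X,L)$); a general fiber $E$ has closure $\overline{E}\subset Y$, and $S=f(\overline{E})\subseteq X$.

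First I would dispatch the trivial case where $(Y,f^\ast L)$ is already adjoint rigid: then $W$ is a point, $\overline{E}=Y$, and $S=f(Y)=X$ by surjectivity, so the claim reduces to adjoint rigidity of $(X,L)$ itself and follows from the general principle isolated below. Otherwise Proposition~\ref{prop-adjoint-rigid} applied to $(Y,f^\ast L)$ guarantees that $(\overline{E},f^\ast L|_{\overline{E}})$ is adjoint rigid with $a$-value equal to $a(Y,f^\ast L)=a(X,L)$.

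For the $a$-invariant of $S$, the restriction $g:=f|_{\overline{E}}\colon\overline{E}\to S$ is dominant and generically finite, so Proposition~\ref{prop-a-dominant} (applied after a smooth resolution of $S$) yields $a(S,L|_S)\geq a(\overline{E},f^\ast L|_{\overline{E}})=a(X,L)$. Conversely, as $w$ ranges over $W$ the images $f(\overline{E}_w)\subset X$ form a dominant family of subvarieties (they cover $X$ since $f$ is surjective and $\phi$ dominant), so Proposition~\ref{prop-a-dominant-member} provides the matching upper bound $a(S,L|_S)\leq a(X,L)$. For adjoint rigidity of $(S,L|_S)$, I would isolate the following pullback principle: for any dominant generically finite morphism $g\colon A\to B$ of smooth projective varieties with $a(A,g^\ast M)=a(B,M)=:a$, the ramification formula $K_A=g^\ast K_B+R$ with $R$ effective rewrites as $K_A+a\,g^\ast M=g^\ast(K_B+aM)+R$, giving $\kappa(B,K_B+aM)\leq\kappa(A,K_A+a\,g^\ast M)$ via pullback of sections; combining this with the non-negativity of $\kappa(B,K_B+aM)$ in the klt/BCHM setting (cf.\ Remark~\ref{rema-bchm}) shows that adjoint rigidity of $A$ forces adjoint rigidity of $B$. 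Applying this to $g=f|_{\overline{E}}$ (or to $f$ itself in the trivial case) closes the argument.

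The main obstacle I anticipate is the careful handling of singularities in this last step: $\overline{E}$ and $S$ need not be smooth, so one must choose compatible smooth resolutions and check that the ramification/pullback computation as well as the Iitaka-dimension non-negativity of the adjoint divisor survive on those models. Once this is in place, the family-theoretic upper bound for $a(S,L|_S)$ and the pullback inequality for adjoint rigidity are fairly mechanical combinations of the results recalled in Section~\ref{sec-preliminaries}.
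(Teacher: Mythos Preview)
The paper does not give its own proof of this proposition; it is simply quoted from \cite[Lemma 4.9]{LST}, so there is no argument in the paper to compare against. Your outline is a correct reconstruction of a standard proof: Propositions~\ref{prop-adjoint-rigid}, \ref{prop-a-dominant}, and \ref{prop-a-dominant-member} pin down $a(S,L|_S)=a(X,L)$, and the Riemann--Hurwitz identity $K_A+a\,g^\ast M=g^\ast(K_B+aM)+R$ with $R\geq 0$ yields $\kappa(B,K_B+aM)\leq\kappa(A,K_A+a\,g^\ast M)$, which combined with $\kappa(B,K_B+aM)\geq 0$ (via Proposition~\ref{prop-a}, using $a(B,M)=a>0$) forces adjoint rigidity of $(S,L|_S)$. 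The resolution-of-singularities bookkeeping you flag is genuine but routine: replace $\overline{E}\to S$ by a morphism between smooth models and invoke the birational invariance of the $a$-value and Iitaka dimension (Proposition~\ref{prop-propoties-of-a}). The only minor point to make explicit is that the images $f(\overline{E}_w)$ must be organized into a bona fide dominant family in the sense of Section~\ref{sec-families} (e.g.\ via the graph in $W\times X$ and a flattening stratification) before Proposition~\ref{prop-a-dominant-member} applies; this is standard.
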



\subsection{Families of varieties}\label{sec-families}
	
	
	A \textit{family of varieties} is a proper, flat morphism $p:\cU\rightarrow W$ of schemes such that the generic fiber of $p$ is geometrically irreducible,
	and $W$ is irreducible.
	Let $X$ be a projective variety.
	A \textit{family of projective subvarieties of $X$} is a family $p$ as above with the following commutative diagram
	\[\begin{tikzcd}
		\cU & {X\times W} & X \\
		& W
		\arrow[hook, from=1-1, to=1-2]
		\arrow["p"', from=1-1, to=2-2]
		\arrow[from=1-2, to=1-3]
		\arrow[from=1-2, to=2-2]
	\end{tikzcd}\]
	with canonical projections such that the left horizontal arrow realizing $\cU$ as a closed subscheme of $X\times W$.
	We say the above family of projective subvarieties of $X$ is a \textit{dominant family} if the composition of the horizontal maps is dominant.
	

	The following proposition says that the $a$- and $b$-invariants are generically constant in a family of projective varieties over an algebraically closed field.
	
	\begin{prop}\label{prop-constant-in-family}
		{\rm (\cite[Theorem 4.3 and Proposition 4.4]{LTDuke}, \cite[Theorem 4.6]{LST})} 
		Let $p:\cU\rightarrow W$ be a family of projective varieties defined over an algebraic closed field of characteristic $0$ and $L$ be an $f$-big and $f$-nef $\bQ$-divisor on $\cU$. Then the following holds.
		\begin{enumerate}[label={\rm(\roman*)}]
			\item There exists a nonempty open subset $W^\circ$ of $W$ such that $a(\cU_t,L|_{\cU_t})$ and $\kappa(\cU_t,K_{\cU_t}+a(\cU_t,L|_{\cU_t})L|_{\cU_t})$ are constant with respect to $t\in W^\circ$.
			\item If a general fiber of $f$ is adjoint rigid, then there exists a nonempty open subset $W^\circ$ of $W$ such that $b(\cU_t,L|_{\cU_t})$ is constant with respect to $t\in W^\circ$.
		\end{enumerate}
	\end{prop}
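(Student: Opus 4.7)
The plan is to reduce to a smooth projective family via resolution of singularities, then apply semi-continuity of cohomology together with Noetherian induction to control the numerical invariants.

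First, I would use Hironaka resolution applied to the total space $\cU$ to obtain a smooth $\widetilde\cU$, and then use generic flatness together with generic smoothness to shrink $W$ so that $\widetilde\cU \to W$ is a smooth projective family with connected fibers and $L$ replaced by its pullback. Since the $a$- and $b$-invariants are birational invariants (Proposition \ref{prop-propoties-of-a}(1) and Remark \ref{rema-b-is-birational}), this reduction does not change the invariants on fibers. Moreover, after a further shrinking, we may assume the relative canonical bundle $K_{\widetilde\cU/W}$ is a line bundle that restricts to $K_{\widetilde\cU_t}$ on fibers.

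For part (i), I would exploit the characterization from Proposition \ref{prop-a}: if $a(\cU_t,L|_{\cU_t}) > 0$, then it equals $\min\{r/s \in \bQ : h^0(\cU_t, sK_{\cU_t} + rL|_{\cU_t}) > 0\}$. For each fixed pair $(r,s)$, upper semi-continuity of $h^0$ in a flat family shows that the locus in $W$ where $h^0(\cU_t, sK_{\cU_t} + rL|_{\cU_t}) > 0$ is closed. Applying BCHM to the generic fiber yields a uniform bound on the denominator of $a$, so only finitely many ratios $r/s$ are relevant to determining whether the $a$-invariant is achieved at a given value. Taking the intersection of the open subsets where each such $h^0$ is either generically zero or generically positive (Noetherian induction), I obtain a $W^\circ \subset W$ on which $a(\cU_t,L|_{\cU_t})$ is constant. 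The Iitaka dimension statement follows similarly: the ranks $h^0(\cU_t, md(K_{\cU_t}+a L|_{\cU_t}))$ are upper semi-continuous, and after shrinking $W$ they stabilize to their values at the generic point for all $m$ up to a stabilization bound, forcing $\kappa$ to be constant.

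For part (ii), assuming adjoint rigidity of a general fiber, I would use Proposition \ref{prop-b-inv-and-monodromy} together with the rigidity to identify the $b$-invariant as the dimension of the face $\cF_{\cU_t,L|_{\cU_t}} \subset \Nef_1(\cU_t)$ orthogonal to $K_{\cU_t} + a L|_{\cU_t}$. After shrinking $W^\circ$, I would use the existence of a relative minimal model program (granted by BCHM applied in families) together with the fact that the relative Picard group has locally constant rank on a smooth projective family over an algebraically closed field to identify $N^1(\cU_t)$ and $\Nef_1(\cU_t)$ across fibers. The class of the adjoint divisor then traces out a locally constant direction, so the orthogonal face has locally constant dimension.

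The main obstacle will be part (ii): while constancy of $N^1$ and $\Nef_1$ in smooth families is standard, controlling that the face $\cF_{\cU_t,L|_{\cU_t}}$ itself varies in a locally constant way requires that the adjoint divisor class and the extremal rays cutting out $\cF$ both vary locally constantly. Adjoint rigidity is essential here: without it, the adjoint map has positive dimensional image and the face could jump even when the Néron–Severi group is constant. With adjoint rigidity in hand, the adjoint divisor class becomes numerically rigid in the family, which is what makes the argument close.
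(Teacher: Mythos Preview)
The paper does not give its own proof of this proposition; it cites \cite{LTDuke} and \cite{LST}, and immediately after the statement remarks that ``the first part of the above proposition is due to the invariance of log plurigenera proved in \cite{HMX13}.'' That remark identifies the genuine input, and it is exactly what your sketch for part (i) is missing.

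Your argument for (i) uses only upper semi-continuity of $t\mapsto h^0(\cU_t,sK_{\cU_t}+rL|_{\cU_t})$. That gives one inequality: if $h^0$ is positive at the generic fibre it is positive everywhere, so $a(\cU_t)\le a(\cU_\eta)$ for all $t$. But it does \emph{not} give the reverse inequality on any open set. Upper semi-continuity allows $h^0(\cU_t,sK_{\cU_t}+rL|_{\cU_t})$ to jump up at special $t$, and when that happens for some $r/s<a(\cU_\eta)$ the value $a(\cU_t)$ drops. Your attempt to control this by ``bounding the denominator of $a$ via BCHM on the generic fibre'' does not work: BCHM applied to $\cU_\eta$ tells you only that $a(\cU_\eta)$ is rational, not that the $a(\cU_t)$ have uniformly bounded denominators, so you are still intersecting infinitely many open sets. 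The same issue recurs for $\kappa$: semi-continuity lets $h^0(\cU_t,md(K_{\cU_t}+a_0L|_{\cU_t}))$ jump up, so $\kappa$ could rise on special fibres, and no ``stabilization bound'' comes from semi-continuity alone. What actually forces constancy is the invariance of log plurigenera of Hacon--M\textsuperscript{c}Kernan--Xu, which says these $h^0$'s are literally constant in a smooth family with klt boundary; this is a deep theorem and is the content of the citation.

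For (ii) your outline is in the right spirit (constancy of $N^1$ in a smooth family plus control of the relevant face), but it is too vague to assess. In any case the paper defers to the cited references for this as well.
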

	The first part of the above proposition is due to the invariance of log plurigenera proved in \cite{HMX13}.
	Working with families of varieties allows us to reduce the problem of identifying thin maps to $(X,L)$ with higher geometric invariants to identifying families of subvarieties with higher geometric invariants, where the later ones form a bounded family when $L$ is semiample by \cite{Hacon2015}.
	
	Combining above arguments, we obtain the following corollary of \cite[Theorem 4.19]{LST}.
	\begin{coro}\label{coro-family-breaking}
		Let $X$ be a geometrically uniruled smooth projective variety over a field of characteristic $0$ with a big and semiample $\bQ$-divisor $L$. Then there exist a proper closed subset $V\subset X$ and finitely many families $(s_i:\cU_i\rightarrow X,p_i:\cU_i\rightarrow W_i)$ $(i\in I)$ where $W_i$ is a closed subvariety of $\Hilb(X)$, such that the following assertions hold.
		\begin{enumerate}[ label={\rm(\arabic*)}]
			\item For each $i$, a general fiber $E$ of $p_i$ is an adjoint rigid subvariety of $X$ with $a(E,L|_E)= a(X,L)$.
			\item Let $f:Y\rightarrow X$ be a generically finite morphism of smooth projective varieties with $a(Y,f^\ast L)=a(Y^\prime,L)=a(X,L)$, where $Y^\prime$ denotes the image of $f$.
			%
			Let $Y\dashrightarrow W$ be the rational map associated to $(Y,K_Y+a(Y,f^\ast L)f^\ast L)$.
			Then there exist $i\in I$ and rational maps $g$ and $h$ such that the following diagram commutes.
			\[\begin{tikzcd}
				Y & {\cU_i} & X \\
				W & {W_i}
				\arrow["{s_i}", from=1-2, to=1-3]
				\arrow["{p_i}", from=1-2, to=2-2]
				\arrow["g", dashed, from=2-1, to=2-2]
				\arrow["f", bend left ,from=1-1, to=1-3]
				\arrow[dashed, from=1-1, to=2-1]
				\arrow["h", dashed, from=1-1, to=1-2]
			\end{tikzcd}\]
		\end{enumerate}
	\end{coro}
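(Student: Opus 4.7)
The plan is to deduce this from \cite[Theorem 4.19]{LST} by combining the adjoint rigidity reductions of Propositions \ref{prop-adjoint-rigid} and \ref{prop-adjoint-rigid-2} with a boundedness argument. The heart of the matter is that every generically finite $f:Y\to X$ with $a(Y,f^*L)=a(X,L)$ must, up to restricting to a general Iitaka fiber, dominate an adjoint rigid subvariety of $X$ whose $a$-value equals $a(X,L)$; the corollary is then a statement that such subvarieties form a bounded family.

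First, starting with such an $f:Y\to X$, I would pass to the rational map $g':Y\dashrightarrow W$ associated to $(Y,K_Y+a(Y,f^*L)f^*L)$. By Proposition \ref{prop-adjoint-rigid}, a general fiber $E$ of $g'$ is adjoint rigid and satisfies $a(E,f^*L|_E)=a(Y,f^*L)=a(X,L)$. Applying Proposition \ref{prop-adjoint-rigid-2} to the restriction $f|_E:E\to f(E)$ shows that the image $f(E)\subset X$ is itself adjoint rigid with $a(f(E),L|_{f(E)})=a(X,L)$. So every $f$ as in the hypothesis produces, via its Iitaka fibration, a family of adjoint rigid subvarieties of $X$ realizing the maximal $a$-value.

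Next I would invoke the boundedness step underlying \cite[Theorem 4.19]{LST}: since $L$ is big and semiample, the Hacon--McKernan--Xu theorem \cite{Hacon2015} implies that the set of adjoint rigid subvarieties $S\subset X$ with $a(S,L|_S)=a(X,L)$ is bounded. Consequently only finitely many components of $\Hilb(X)$ parametrize such $S$; let these be the $W_i$, set $\cU_i\subset X\times W_i$ to be the universal family with projections $s_i,p_i$, and take $V$ to be the union of those boundary loci (e.g.\ images of non-dominant components, or the common loci where the subvarieties degenerate to non-adjoint-rigid ones) outside of which the construction is well-behaved. Part (1) is then immediate.

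For part (2), since the image $f(E_y)$ of the Iitaka fiber through a general $y\in Y$ corresponds to a well-defined point $[f(E_y)]\in W_i$ for some $i$, the desired maps are $h:Y\dashrightarrow \cU_i$, $y\mapsto(f(y),[f(E_y)])$, and $g:W\dashrightarrow W_i$, $[E]\mapsto[f(E)]$; commutativity of the diagram is built into these definitions. The main obstacle — the only nontrivial input — is the boundedness of adjoint rigid subvarieties of $X$ with $a$-value $a(X,L)$, which is where semiampleness of $L$ (as opposed to the weaker big-and-nef assumption elsewhere in the paper) is crucial, and which is precisely what \cite[Theorem 4.19]{LST} supplies via \cite{Hacon2015}.
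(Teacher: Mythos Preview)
Your approach is essentially the paper's own: use Propositions \ref{prop-adjoint-rigid} and \ref{prop-adjoint-rigid-2} to produce adjoint rigid subvarieties of $X$ with $a$-value $a(X,L)$ from the Iitaka fibration of $Y$, then invoke the boundedness result of \cite{Hacon2015} (packaged in \cite[Theorem 4.17/4.19]{LST}) to get finitely many Hilbert components $W_i$ and factor $W$ through one of them via the universal property. Two small points you gloss over that the paper handles explicitly:

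\begin{enumerate}
\item To apply Proposition \ref{prop-adjoint-rigid-2} to $f|_E:E\to f(E)$ you need the hypothesis $a(E,f^*L|_E)=a(f(E),L|_{f(E)})$, which is not automatic. The paper checks this via the squeeze
\[
a(X,L)=a(Y',L)\;\geq\; a(f(E),L)\;\geq\; a(E,f^*L)\;=\;a(Y,f^*L)\;=\;a(X,L),
\]
using Proposition \ref{prop-a-dominant-member} (since the $f(E)$ sweep out $Y'$) and Proposition \ref{prop-a-dominant}; this is also where the hypothesis $a(Y',L)=a(X,L)$ is actually used.
\item Your pointwise description $y\mapsto (f(y),[f(E_y)])$ is not yet a morphism to $\Hilb(X)$: the universal property requires a \emph{flat} family of subschemes. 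The paper obtains one by replacing $Y$ with the closure $Y''$ of its image in $W\times X$, so that $Y''\to W$ is generically flat with general fiber the adjoint rigid subvariety $f(E)$, and then invokes the universal property (plus irreducibility of $W$) to land in a single $W_i$.
\end{enumerate}

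With these two details filled in, your argument coincides with the paper's.
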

	\begin{proof}
		Let $W_i\in\Hilb(X)$ be the closure of the finitely many parametrizing varieties constructed in \cite[Theorem 4.17]{LST} and let $p_i:\cU_i\rightarrow W_i$ be the corresponding universal families.
		Then (1) follows from the same theorem.
		In the situation of (2), let $E$ be the closure of a general fiber of $Y\dasharrow W$ and $E^\prime$ be its image in $X$. Then
		\begin{align*}
			a(X,L)&=a(Y^\prime,L)&\\
			&\geq a(E^\prime,L)&\text{(by Proposition \ref{prop-a-dominant-member})}\\
			&\geq a(E,L)&\text{(by Proposition \ref{prop-a-dominant})}\\
			&=a(Y,f^\ast L)&\text{(by Proposition \ref{prop-adjoint-rigid})}\\
			&=a(X,L),&
		\end{align*}
		which implies that $a(E,f^\ast L)= a(E^\prime,L)$ and thus $E^\prime$ is adjoint rigid as well by Proposition \ref{prop-adjoint-rigid-2}.
		Let $Y^{\prime\prime}$ be the closure of the image of $Y$ in $W\times X$.
		Then the induced morphism $Y^{\prime\prime}\ra W$ is generically flat whose general fiber $E^{\prime\prime}$ is an adjoint rigid subvariety of $X$ with $a(E^{\prime\prime},L)=a(X,L)$.
		By universal property of Hilbert schemes, $W$ factors rationally through $\Hilb(X)$, whose image lies in $W_i$ for some $i$ by \cite[Theorem 4.17]{LST} and the irreducibility of $W$.
	\end{proof}

	\subsection{Geometric exceptional sets}\label{sec-ges}
	
	The geometric exceptional set was defined for geometrically rational connected varieties in the original papers \cite{LTDuke,LST}.
	For Fano varieties, it reads as follows.
	\begin{defi} {\rm(Geometric exceptional sets for Fano varieties)}\label{defi-ges}
		Let $X$ be a smooth projective Fano variety over a number field $F$ and set $L=-K_X$.
		Then the geometric exceptional set $Z$ is the union of $f(Y(F))$ where $Y$ is a smooth projective variety and $f:Y\rightarrow X$ is a thin map in the following cases:
		\begin{enumerate}[ label={\rm(\arabic*)}]
			\item when $(a(Y,f^\ast L),b(F,Y,f^\ast L))>(a(X,L),b(F,X,L))$; or
			\item when $(a(Y,f^\ast L),b(F,Y,f^\ast L))=(a(X,L),b(F,X,L))$ and either
			\begin{enumerate}[label={\rm(\alph*)}]
				\item $\dim(Y)<\dim(X)$; or
				\item $\dim(Y)=\dim(X)$ and $\kappa(Y,K_Y+a(Y,f^\ast L)f^\ast L)>0$; or
				\item $\dim(Y)=\dim(X)$, $\kappa(Y,K_Y+a(Y,f^\ast L)f^\ast L)=0$ and $f$ is face contracting.
			\end{enumerate}
		\end{enumerate}
	\end{defi}
	\begin{rema}\label{rema-ges}
		See \cite[Section 5] {LST} for some idea behind Definition \ref{defi-ges}.
	\end{rema}

	\section{Geometric exceptional set for Batyrev and Tschinkel's example}\label{sec-the-example}
	
	Let us start by reviewing some notations and introducing new ones.
	We work over a field $F$ of characteristic $0$.
	Let $X$ be the hypersurface in $\bP_x^3\times\bP_y^3$ defined by the equation $$\sum_{i=0}^{3}x_i y_i^3=0$$
	and set $L=-K_X$.
	We denote by $h_1,h_2\in N^1(X)$ the pullbacks, via the projections $\pi_x$ and $\pi_y$, of the hyperplane classes in $\bP^3_x$ and $\bP^3_y$ respectively.
	By Lefschetz hyperplane theorem \cite[Theorem 3.1.17]{Lazarsfeld2004}, $N^1(X)_{\bZ}$ is generated by $h_1$ and $h_2$.
	The variety $X$ is a smooth Fano $5$-fold with $L=-K_X=3h_1+h_2$.
	Thus the $a$- and $b$-invariants of $X$ are $a(X,L)=1$ and $b(F,X,L)=2$.
	We denote by $H_i$ $(i=0,1,2,3)$ the coordinate hyperplanes of $\bP^3_x$.
	A fiber of $\pi_x$ is smooth if and only if it does not lie above the union of $H_i$.
	
	\subsection{Adjoint rigid subvarieties of $X$}\label{sec-3.1}
	The goal of this section is to prove Proposition \ref{prop-adjoint-rigid-subvarieties}, which classifies subvarieties of $X$ with higher $a$-values and adjoint rigid subvarieties of $X$ with the same $a$-value.
	
	The following lemma studies the $a$-invariant and the adjoint-rigidity of singular fibers of $\pi_x$.
	\begin{lemm}\label{lemm.fiber}
		Let $S$ be the fiber of $\pi_x$ at a closed point $p\in\bP^3_x$. Then
		\begin{enumerate}[ label={\rm(\arabic*)}]
			\item If $p$ is not contained in any $H_i$, then $S$ is a smooth cubic surface and $h^0(S,K_S+L)=1$.
			We have that $a(S,L)=1$ and that $(S,L)$ is adjoint rigid.
			\item If $p$ is contained in exactly one $H_i$, then $S$ is the cone over a smooth cubic curve.
			We have that $a(S,L)=2$ and that $(S,L)$ is not adjoint rigid.
			\item  If $p$ is contained in at least two $H_i$'s, and let $T$ be an irreducible component of $S_\mathrm{red}$.
			Then $a(T,L)=3$ and $(T,L)$ is adjoint rigid.
		\end{enumerate}
	\end{lemm}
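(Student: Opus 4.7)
The plan is to dispatch each case directly from the defining equation of the fiber, using the fact that $L|_S$ coincides with the hyperplane class $\cO_S(1)$ coming from the $\bP_y^3$-factor (since $h_1$ restricts trivially to any $\pi_x$-fiber), and invoking Proposition \ref{prop-propoties-of-a}(2) to base-change to $\overline{F}$ whenever convenient. In each case I would compute the $a$-invariant and adjoint rigidity directly from the definition after identifying an explicit resolution.

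Case (1), with all $a_i \neq 0$, is immediate: $S$ is a smooth cubic surface in $\bP_y^3$, so hypersurface adjunction gives $K_S = -\cO_S(1) = -L|_S$; hence $K_S + L \sim 0$, $h^0(S, K_S+L) = h^0(S, \cO_S) = 1$, $a(S,L) = 1$, and the Iitaka dimension is $0$, i.e.\ $(S,L)$ is adjoint rigid.

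For (2), the $\mathfrak{S}_4$-symmetry of the defining equation lets me take $a_0 = 0$. Then the equation $a_1 y_1^3 + a_2 y_2^3 + a_3 y_3^3 = 0$ does not involve $y_0$, so $S$ is the projective cone with apex $[1{:}0{:}0{:}0]$ over the smooth plane cubic $C := \{a_1 y_1^3 + a_2 y_2^3 + a_3 y_3^3 = 0\}$. Blowing up the apex produces a resolution $\mu : \widetilde{S} \to S$ which realizes $\widetilde{S}$ as a $\bP^1$-bundle over the elliptic curve $C$, with exceptional section $e$ (of self-intersection $-3$) and ruling class $f$. I would then read off $\mu^*L = e + 3f$ from $\mu^*L \cdot f = 1$ and $\mu^*L \cdot e = 0$, and $K_{\widetilde{S}} = -2e - 3f$ from $K_{\widetilde{S}} \cdot f = -2$ together with the adjunction $(K_{\widetilde{S}} + e) \cdot e = 2g(C) - 2 = 0$. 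Since $\Eff^1(\widetilde{S}) = \bR_{\ge 0}\, e + \bR_{\ge 0}\, f$, the class $K_{\widetilde{S}} + t\,\mu^*L = (t-2)e + 3(t-1)f$ is pseudo-effective precisely when $t \ge 2$, giving $a(S,L) = 2$; and $K_{\widetilde{S}} + 2\mu^*L = 3f$ induces the projection to $C$, which has Iitaka dimension $1$, so $(S,L)$ is not adjoint rigid.

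For (3), after permuting indices I may assume $a_0 = a_1 = 0$, and the defining equation of $S$ then involves at most the two variables $y_2, y_3$. Passing to $\overline{F}$ and factoring, every geometric component of $S_{\mathrm{red}}$ is a hyperplane $\bP^2 \subset \bP_y^3$ with restricted polarization $\cO_{\bP^2}(1)$; Proposition \ref{prop-propoties-of-a}(2) reduces $a(T,L)$ and the adjoint rigidity of any $F$-irreducible component $T$ to those of $(\bP^2, \cO_{\bP^2}(1))$, where $K + 3L \sim 0$ yields $a = 3$ and adjoint rigidity at once. The only delicate ingredient is the intersection-theoretic analysis on the resolved cone in case (2) -- correctly identifying the pseudo-effective cone of the elliptic ruled surface and pinning down the coefficient of $f$ in $K_{\widetilde{S}}$ via adjunction on the negative section; everywhere else the computation is routine once the resolution is in hand.
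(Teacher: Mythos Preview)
Your argument is correct in all three cases. Cases (1) and (3) match the paper's treatment essentially verbatim. In case (2) you take a genuinely different route: you compute the classes $\mu^*L=e+3f$ and $K_{\widetilde S}=-2e-3f$ explicitly, identify the pseudo-effective cone of the elliptic ruled surface as $\bR_{\ge 0}e+\bR_{\ge 0}f$, and read off $a=2$ and $\kappa=1$ directly. The paper instead argues more structurally: it first observes that $(S,L)$ cannot be adjoint rigid because $\widetilde S$ is not rationally connected (Remark~\ref{rema-rationally-conn}); then, since a general fiber of the canonical fibration is adjoint rigid and hence a rational curve, and the only rational curves on an elliptic ruled surface are the rulings, the canonical map must coincide birationally with the projection to $C$; finally $a$ is recovered from the single relation $(a\,\beta^*L+K_{\widetilde S})\cdot l=0$ on a ruling $l$. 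Your approach is more elementary and self-contained but requires knowing the effective cone of the ruled surface; the paper's approach avoids that input at the cost of invoking the rational-connectedness criterion and Proposition~\ref{prop-adjoint-rigid}. Both are clean; either would be acceptable here.
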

	\begin{proof}
		Let $p=(a_1,a_2,a_3,a_4)$.
		Then the cubic surface $S$ is defined by the equation $a_1y_1^3+a_2y_2^3+a_3y_3^3+a_4y_4^3=0$ and
		the number of $H_i$ containing $p$ equals to the number of $i$ such that $a_i=0$.
		
		In case $(1)$, we have that $K_S+L\equiv K_S+h_2\equiv0$. Thus $h^0(S,K_S+L)=1$ and $a(S,L)=1$ by Proposition \ref{prop-a}.
		
		In case $(2)$, the surface $S$ is a cone over a smooth cubic curve $E$.
		Blowing up the cone point $\beta:\widetilde{S}\rightarrow S$ gives a ruled surface $\pi:\widetilde{S}\rightarrow E$.
		I claim that $\pi:\widetilde{S}\rightarrow E$ is birationally equivalent to the rational map $\pi^\prime$ associated to $(\widetilde{S},K_{\widetilde{S}}+a(\widetilde{S},L)\beta^\ast (L))$.
		Indeed, $S$ is not adjoint rigid since it is not rationally connected by Remark \ref{rema-rationally-conn};
		on the other hand, a general fiber of the $\pi^\prime$ is adjoint rigid by Proposition \ref{prop-adjoint-rigid} and so is a rational curve,
		but the only rational curves on $\widetilde{S}$ are the fibers of $\pi$.
		Let $l$ be a general fiber of $\pi$, we have that$(a(S,L)\beta^{\ast} L+K_{\widetilde{S}})\, l=0$ by above arguments, where $K_{\widetilde{S}}\cdot l=-2$ by adjunction and $\beta^\ast L\cdot l=1$.
		We conclude that $a(S,L)=2$.
		
		In case $(3)$, after base change to the algebraic closure, the surface $\overline{S}$ is a union of planes or a non-reduced plane. 
		By \cite[Corollary 4.5]{LST}, the $a$-value of $T$ is equal to the $a$-invariant of any irreducible component of $\overline{T}$.
		Thus $a(T,L)=3$, and $T$ is adjoint rigid.
	\end{proof}
	
	Our next goal is to prove Proposition \ref{prop.linearspace}.
	\begin{lemm}\label{lemm-singularity-1}
		Let $Y\subset X$ be the $\pi_x$-preimage of a line $l$ in $\bP_x^3$.
		Assume that $Y\nsubseteq \pi_x^{-1}(H_j) \,(j=0,1,2,3)$, then $Y$ is normal with canonical singularities.
	\end{lemm}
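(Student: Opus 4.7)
The plan is to realize $Y$ as a hypersurface of bidegree $(1,3)$ in $l\times\bP^3_y\cong\bP^1\times\bP^3$, cut out by $F=\sum_{i=0}^3 x_i(t)y_i^3=0$, where each $x_i|_l$ is by hypothesis a nonzero linear form on $l\cong\bP^1_t$. Since $Y$ is a Cartier divisor in a smooth variety, it is Cohen--Macaulay, in particular $S_2$; by Serre's criterion, normality follows once we show that $\Sing(Y)$ has codimension at least two in $Y$.

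First I would classify the singular fibers of $\pi_x:Y\to l$ by the cardinality of the set $I(t_0):=\{i\in\{0,1,2,3\}\mid x_i(t_0)=0\}$. The hypothesis forces $|I(t_0)|\leq 3$, since each $x_i|_l$ has a unique zero on $l$ and no point of $\bP^3$ has all four coordinates vanishing. A direct Jacobian computation of $\partial F/\partial t$ and $\partial F/\partial y_i$ in an appropriate affine chart then reveals three cases. When $|I(t_0)|\leq 1$, the partial $\partial F/\partial t$ is nonzero at each candidate critical point, so $Y$ is smooth over $t_0$. When $|I(t_0)|=2$, there are exactly three isolated singular points of $Y$ over $t_0$, located on the line common to the three planes making up the reducible singular cubic surface. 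When $|I(t_0)|=3$, the singular locus of $Y$ in the fiber is a smooth Fermat-type cubic curve inside the residual coordinate plane corresponding to the unique non-vanishing $x_i$. In all three cases $\dim\Sing(Y)\leq 1$, which gives normality.

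To upgrade normality to canonical singularities, I would identify the local analytic type at each singular point. In the isolated case $|I(t_0)|=2$, a shift in the two vanishing-coordinate variables centers the equation so that its quadratic part has rank two and the cubic part restricted to the two-dimensional kernel is the non-degenerate binary cubic $x_j(0)y_j^3+x_k(0)y_k^3$ with $j,k\notin I(t_0)$; a general hyperplane section then carries an $A_2$ Du Val singularity, identifying the threefold germ as compound $A_2$, hence terminal and in particular canonical. In the non-isolated case $|I(t_0)|=3$, the analytic rescaling $y_0\mapsto y_0\,x_0(t)^{-1/3}$ eliminates the $t$-dependence of the $y_0^3$ term, and the implicit function theorem applied to the nonvanishing linear part of the coefficient of $t$ brings the equation to the analytic normal form $y_0^3+tu=0$ in $\bA^4$ with a free coordinate $v$. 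This germ is locally the product of the $A_2$ Du Val surface singularity with $\bA^1_v$; blowing up the singular line $\{y_0=t=u=0\}$ resolves it with exceptional discrepancy $0$, confirming that the germ is canonical.

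The main technical hurdle will be the non-isolated case $|I(t_0)|=3$, where the singular locus is a full curve: one must carefully verify that $\Sing(Y)$ in the fiber is exactly the smooth Fermat cubic (and not larger) and identify the right analytic change of variables producing the normal form $y_0^3+tu=0$. Once that form is in hand the discrepancy calculation is routine, as is the remaining bookkeeping for the isolated case and the $S_2$ half of Serre's criterion.
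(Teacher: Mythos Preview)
Your proposal is correct. The normality half is essentially identical to the paper's: realize $Y$ as a hypersurface (hence $S_2$), compute the Jacobian in affine charts, stratify by $|I(t_0)|$, and bound $\dim\Sing(Y)$ case by case. The paper's three cases $b_i\neq 0$ / one $b_i=0$ / two $b_i=0$ in its chosen chart correspond exactly to your $|I|=1,2,3$.

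For canonicity the two arguments diverge in method. The paper uniformly invokes the hyperplane-section criterion of Koll\'ar--Mori [KM, Theorem~5.34]: at each singular point it cuts by a generic hyperplane, then by Hensel-type formal manipulations reduces the resulting surface germ to the shape $(\text{unit})y_1'^2+g$ with $\mathrm{mult}_0(g)=2$, and concludes Du Val via [KM,~4.25]. Your route is more explicit. In the isolated case $|I|=2$ you also pass to a hyperplane section, but you identify the precise type $cA_2$ (hence terminal, not just canonical). In the non-isolated case $|I|=3$ you bypass the hyperplane section entirely: after absorbing the unit coefficient into $y_0^3$ and using the implicit function theorem on the linear part of the $t$-coefficient, you obtain the normal form $y_0^3+tu=0$ with a free coordinate $v$, i.e.\ $A_2\times\bA^1$, and read off canonicity from the crepant blowup of the singular line. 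The paper's approach is uniform across the two cases and leans on a single black-box criterion; yours requires two separate local arguments but yields sharper information (terminality when $|I|=2$, the explicit analytic type when $|I|=3$) and avoids the somewhat lengthy formal power series bookkeeping.
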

	\begin{proof}
		We may assume that the base field $F$ is algebraically closed.
		The morphism $\pi_x|_{Y}:Y\ra \bP^1$ can be covered by finitely many morphisms of the form of
		$Y^\prime\ra \bA^1_s$, where $Y^\prime$ is the hypersurface in $\bA^4_{s,y_1,y_2,y_3}$ defined by
		$$s+\sum_{i=1}^{3}(a_is+b_i)y_i^3=0$$
		for some constants $a_i,b_i\in F $.
		We notice that
		\begin{enumerate}[ label={\rm(\arabic*)}]
			\item at least one of $a_i$ and $b_i$ is nonzero for each $i$ (by the assumption that $Y\nsubseteq \pi_x^{-1}(H_j)$); and
			\item $b_1,b_2,b_3$ do not vanish simultaneously (since the fiber at $s=0$ is the cubic surface with coefficients $(0:b_1:b_2:b_3)\in\bP^3_x$).
		\end{enumerate}
		
		We first show that $Y$ is normal.
		It is sufficient to show that $\dim \Sing(Y)\leq1$ since $Y$ is a complete intersection subscheme of $X$ \cite[II.Proposition 8.23]{Hartshorne1977AlgebraicG}.
		This reduces the task to showing that $\dim \Sing(Y^\prime)\leq1$.
		By the Jacobian criterion for smoothness, we obtain that
		$$S:=\Sing (Y^\prime)=\left\{ (0,y_1,y_2,y_3)\mid \sum_{i=1}^{3}a_iy_i^3=-1,\quad b_iy_i=0\,(i=1,2,3) \right\}.$$
		We divide it into three cases.
		\begin{enumerate}[ label={\rm(\arabic*)}]
			\item Suppose that none of $b_i$ is zero, then $S=\emptyset$.
			\item Suppose, without loss of generality, that $b_1=0$ and $b_2,b_3\neq0$. Then $\dim S=0$.
			$$S=\left\{ (0,y_1,0,0)\mid a_1y_1^3=-1 \right\}.$$
			\item Suppose, without loss of generality,  that $b_1=b_2=0$ and $b_3\neq0$. Then $\dim S=1$.
			$$S=\left\{ (0,y_1,y_2,0)\mid a_1y_1^3+a_2y_2^3=-1 \right\}.$$
		\end{enumerate}
		Hence we conclude that $Y$ is normal.
		
		Next, we show that $Y$ has canonical singularities.
		Again, it is enough to show that $Y^\prime$ has canonical singularities.
		By \cite[Theorem 5.34]{Kollr1998}, we only need to show that through any closed point $z\in S$, a general hyperplane section of $Y^\prime$ has canonical singularities.
		We have two cases to consider.
		\begin{enumerate}[ label={\rm(\arabic*)}]
			\item If $b_1=0$ and $b_2,b_3\neq0$, fix a closed point $z\in S$, we have that $z=(0,z_1,0,0)$ and $a_1z_1^3=-1$.
			A general hyperplane $H$ through $z$ is
			$$s+c_1(y_1-z_1)+c_2y_2+c_3y_3=0,$$
			where we may assume that $c_1,c_2,c_3\neq0$.
			The hyperplane section $ Y^\prime \cap H$ is given by the equation
			$$f(y_1,y_2,y_3)=-(c_1(y_1-z_1)+c_2y_2+c_3y_3)(a_1y_1^3+a_2y_2^3+a_3y_3^3+1)+b_2y_2^3+b_3y_3^3,$$
			Now we make a change of coordinates $y_1^\prime:=y_1-z_1$ and consider the polynomial $f_1(y_1^\prime,y_2,y_3):=f(y_1,y_2,y_3)$ in the formal polynomial ring $F [[y_1^\prime,y_2,y_3]]$.
			We have that
			\begin{align*}
				&f_1(y_1^\prime,y_2,y_3)\\
				=&-(c_1y_1^\prime+c_2y_2+c_3y_3)(a_1(y_1^\prime+z_1)^3+a_2y_2^3+a_3y_3^3+1)+b_2y_2^3+b_3y_3^3\\
				=&-(c_1y_1^\prime+c_2y_2+c_3y_3)(\text{(unit)}\cdot y_1^\prime+a_2y_2^3+a_3y_3^3)+b_2y_2^3+b_3y_3^3\\
				=&\text{(unit)}\cdot y_1^{\prime2}-\text{(unit)}\cdot(c_2y_2+c_3y_3)y_1^\prime+b_2y_2^3+b_3y_3^3
			\end{align*}
			in the formal polynomial ring $F [[y_1^\prime,y_2,y_3]]$ by repeatedly using Hensel's lemma to eliminate higher-order terms as in \cite[4.24.(3)]{Kollr1998}.
			By a coordinate change, the scheme defined by $f_1(y_1^\prime,y_2,y_3)$ is isomorphic to the scheme defined by
			$$f_2(y_1^\prime,y_2,y_3):=\text{(unit)}y_1^{\prime2}+\text{(unit)}((c_2y_2+c_3y_3)^2+b_2y_2^3+b_3y_3^3),$$
			where the latter part is of multiplicity $2$.
			Thus by the arguments in \cite[(4.25)]{Kollr1998}, the singularity is canonical.
			
			
			\item If $b_1=b_2=0$ and $b_3\neq0$, let $z\in  Y^\prime$ be a singular point, then $z=(0,z_1,z_2,0)\in\bA^4$ and $a_1z_1^3+a_2z_2^3=-1$.
			Proceeding analogously, a general hyperplane section of $ Y $ at $z$ is given by
			$$f(y_1,y_2,y_3)=-(c_1(y_1-z_1)+c_2(y_2-z_2)+c_3y_3)(a_1y_1^3+a_2y_2^3+a_3y_3^3+1)+b_3y_3^3.$$
			Now we make a change of coordinates $y_1^\prime:=y_1-z_1$ and $y_2^\prime:=y_2-z_2$, and consider the polynomial $f_1(y_1^\prime,y_2^\prime,y_3):=f(y_1,y_2,y_3)$ in the formal polynomial ring $ F [[y_1^\prime,y_2^\prime,y_3]]$.
			We have that
			\begin{align*}
				&f_1(y_1^\prime,y_2^\prime,y_3)\\
				=&-(c_1y_1^\prime+c_2y_2^\prime+c_3y_3)(a_1(y_1^\prime+z_1)^3+a_2(y_2^\prime+z_2)^3+a_3y_3^3+1)+b_3y_3^3\\
				=&-(c_1y_1^\prime+c_2y_2^\prime+c_3y_3)(\text{(unit)}\cdot z_1^2y_1^\prime+\text{(unit)}\cdot z_2^2y_2^\prime)+b_3y_3^3
			\end{align*}
			in the formal polynomial ring $F [[y_1^\prime,y_2,y_3]]$.
			It is impossible for both $z_1$ and $z_2$ to be zero, so we may, without loss of generality, assume that $z_1 \neq 0$.
			Regarding $f_1(y_1^\prime,y_2^\prime,y_3)$ as a polynomial in $y_1^\prime$, since the degree $2$ term is nonzero, by a coordinate change, the scheme defined by $f_1$ is isomorphic to the one defined by
			$$f_2(y_1^\prime,y_2^\prime,y_3):=\text{(unit)}\cdot y_1^{\prime2}+f_3(y_2^\prime,y_3),$$
			where $f_3$ is a polynomial of multiplicity $2$.
			Thus the singularity is canonical in this case.\qedhere
		\end{enumerate}
	\end{proof}
	
	\begin{lemm}
		Let $Y\subset X$ be the $\pi_x$-preimage of a plane in $\bP_x^3$.
		Assume that $Y\nsubseteq \pi_x^{-1}(H_j) \,(j=0,1,2,3)$, then $Y$ is normal with canonical singularities.
	\end{lemm}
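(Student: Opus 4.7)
The plan is to adapt the strategy of Lemma \ref{lemm-singularity-1}, first performing a local Jacobian computation to establish normality, and then reducing the canonical singularities statement to the already-proved line case.

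\textbf{Local model.} After base change to the algebraic closure, cover $\pi_x|_Y \colon Y \to P \cong \bP^2$ by standard affine patches: parametrize $P$ by $(s,t) \in \bA^2$ so that $x_i = \ell_i(s,t) := a_i + b_i s + c_i t$, and pick the fiber chart $y_0 = 1$. Then $Y$ is locally isomorphic to the hypersurface $Y^\prime$ in $\bA^5_{s,t,y_1,y_2,y_3}$ defined by
\[
F \;:=\; \ell_0(s,t) + \sum_{i=1}^{3} \ell_i(s,t)\, y_i^{3}.
\]
The hypothesis $Y \nsubseteq \pi_x^{-1}(H_j)$ says that each $\ell_j$ is not identically zero. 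Since $P \hookrightarrow \bP^3_x$ is the embedding of a $2$-plane, the four forms $\ell_0, \ldots, \ell_3$ span a $3$-dimensional subspace of the space of affine linear functions on $\bA^2_{s,t}$; in particular no three of them can simultaneously be scalar multiples of a single non-zero linear form, since that would force the image of $P$ to lie in a line.

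\textbf{Step 1 (Normality).} By the Jacobian criterion the singular locus of $Y^\prime$ is cut out by $\ell_i(s,t)\, y_i^{2} = 0$ for $i=1,2,3$ together with $F = \partial_s F = \partial_t F = 0$. Stratify by $J := \{\, i \in \{1,2,3\} : y_i \neq 0 \,\}$, on which $y_i = 0$ for $i \notin J$, $\ell_i(s,t) = 0$ for $i \in J$, $\ell_0(s,t) = 0$, and $b_0 + \sum_{i \in J} b_i y_i^{3} = c_0 + \sum_{i \in J} c_i y_i^{3} = 0$. The case $J = \emptyset$ forces $\ell_0 \equiv 0$, excluded by hypothesis. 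For $|J| = 1$ the $(s,t)$-locus is cut by two affine forms (so has dimension $\leq 1$) and the $y$-fiber is finite, giving a stratum of dimension $\leq 1$. For $|J| = 2$ the planarity observation guarantees that the three relevant forms $\ell_0, \ell_i, \ell_j$ are not all proportional, so their common vanishing is $0$-dimensional, while the $y$-direction is $\leq 1$-dimensional; total $\leq 1$. For $|J| = 3$ the four forms $\ell_0, \ldots, \ell_3$ spanning a $3$-dimensional subspace make their common vanishing $0$-dimensional, and the $y$-direction is $\leq 1$-dimensional; total $\leq 1$. Hence $\dim \Sing(Y^\prime) \leq 1$, comfortably within the threshold for Serre's $R_1$. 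Since $Y$ is a hypersurface in the smooth variety $\bP^3_x \times \bP^3_y$ it is Cohen-Macaulay, so Serre's $R_1 + S_2$ criterion yields normality.

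\textbf{Step 2 (Canonical singularities).} Given $z \in \Sing(Y)$, choose a line $l \subset P$ through $\pi_x(z)$ with $l \nsubseteq P \cap H_j$ for each $j = 0, 1, 2, 3$; this is possible because only four lines in the $\bP^1$-family of lines through $\pi_x(z)$ are forbidden. Then $Y_l := \pi_x^{-1}(l)$ is an effective Cartier divisor on $Y$ (cut out by the linear form defining $l$ inside $P$), and by Lemma \ref{lemm-singularity-1} it is normal with canonical singularities. As $Y_l$ is Gorenstein, canonical equals klt there, so inversion of adjunction (\cite[Theorem 5.50]{Kollr1998}) applied to the pair $(Y, Y_l)$ shows that $(Y, Y_l)$ is plt near $Y_l$, and hence $Y$ is klt at $z$. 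Being itself Gorenstein, $Y$ is therefore canonical at $z$. Alternatively one could mimic the direct Hensel argument of Lemma \ref{lemm-singularity-1} by taking two successive general hyperplane sections through $z$ and normalizing the resulting surface germ to a quadratic-leading form.

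\textbf{Main obstacle.} The delicate point is the $|J| = 2$ stratum in Step 1: a naïve count gives dimension $2$, exactly matching the codimension threshold for $R_1$. The extra input that pushes the dimension strictly below is the planarity constraint forbidding three of the $\ell_j$ from being proportional to a single linear form, a feature absent from the line case of Lemma \ref{lemm-singularity-1}.
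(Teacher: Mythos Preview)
Your proof is correct and follows the same two-step architecture as the paper: normality via Serre's criterion using a Jacobian computation, and canonical singularities via inversion of adjunction reducing to the line case of Lemma~\ref{lemm-singularity-1}.

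The differences are in execution rather than strategy. For normality, the paper argues in one line that $\Sing(Y)$ lies in $\pi_x^{-1}(\bigcup_j H_j)$, a codimension-$1$ locus, and invokes the complete-intersection property; taken literally this only gives $R_0$, so your explicit stratification by $J$ and the planarity constraint (no three $\ell_j$ proportional) is more careful and actually establishes $\dim\Sing(Y')\leq 1$, hence $R_1$ with room to spare. For the canonical part, the paper fixes a divisor $E$ over $Y$ with center $T$ and cuts by a \emph{general} $H\in|h_1|$ meeting $T$, then applies inversion of adjunction in the form of \cite[Corollary~1.4.5]{BCHM10}; you instead work pointwise at $z\in\Sing(Y)$ and choose a specific line $l\subset P$ through $\pi_x(z)$ avoiding the four lines $P\cap H_j$. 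Both reductions feed the same input (a normal canonical hyperplane section from Lemma~\ref{lemm-singularity-1}) into inversion of adjunction, so the arguments are interchangeable. Your pointwise formulation is slightly cleaner in that it avoids tracking a fixed exceptional divisor $E$.

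One small remark: in your ``main obstacle'' paragraph, note that even the naive bound $\dim\leq 2$ for the $|J|=2$ stratum would already suffice for $R_1$ (since $\dim Y=4$), so the planarity observation, while correct and yielding the sharper bound $\leq 1$, is not strictly needed for normality.
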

	\begin{proof}
		Since $\pi_x$ is smooth outside $\bigcup_{j=0}^3 H_j$, the singular locus of $Y$ lies in the preimage of $\bigcup_{j=0}^3 H_j$, which has codimension $1$ in $Y$.
		Thus $Y$ is normal since it is a complete intersection subscheme of $X$ \cite[II.Proposition 8.2.3]{Hartshorne1977AlgebraicG}.
		To show that it is canonical, let $E$ be a divisor over $Y$ at a center $T$ of codimension $\geq2$.
		Let $H\in\left|h_1 \right|$ be a general hyperplane section of $Y$ which intersects $T$.
		Then $K_Y+H$ is Cartier and $H$ is normal with canonical singularities by Lemma \ref{lemm-singularity-1}.
		By inversion of adjunction \cite[Corollary 1.4.5]{BCHM10}, the discrepancy of $(Y,H)$ with respect to $E$ is positive.
		Thus the discrepancy of $Y$ with respect to $E$ is positive as well by \cite[Lemma 2.27]{Kollr1998}.
	\end{proof}
	
	\begin{prop}\label{prop.linearspace}
		Let $Y$ be the $\pi_x$-preimage of a line or plane in $\bP_x^3$ and suppose that $Y$ is not contained in the union of singular fibers of $\pi_x$. Then $a(Y,L)=1$ and $(Y,L)$ is not adjoint rigid.
	\end{prop}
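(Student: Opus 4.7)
The plan is to realize $Y$ as a bidegree $(1,3)$ divisor in $\bP^d \times \bP_y^3$, where $d=1$ in the line case and $d=2$ in the plane case, and then compute the adjoint class directly via adjunction and exploit the structure of $Y$ as a cubic surface bundle over $\bP^d$. Concretely, restricting the defining equation $\sum x_i y_i^3 = 0$ to the chosen line or plane inside $\bP^3_x$ yields an equation of bidegree $(1,3)$ on $\bP^d \times \bP_y^3$, so $Y$ sits inside $\bP^d \times \bP^3$ as such a hypersurface.

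The two preceding lemmas tell me that $Y$ is normal with canonical singularities, so if $\beta: \widetilde Y \to Y$ is a smooth resolution then $K_{\widetilde Y} = \beta^* K_Y + E$ with $E \geq 0$ exceptional, and $a(Y, L)$ is computed on $\widetilde Y$ by Definition \ref{defi-a}. Writing $H_1, H_2$ for the hyperplane classes on the two factors, adjunction in $\bP^d \times \bP^3$ gives
\[
K_Y = \bigl(-(d+1)H_1 - 4H_2 + H_1 + 3H_2\bigr)\big|_Y = (-dH_1 - H_2)|_Y,
\]
while $L|_Y = (3H_1 + H_2)|_Y$. Hence $K_Y + L|_Y = (3-d) H_1|_Y$, which is the pullback of an effective (in fact ample) class from the base $\bP^d$. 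Pulling back to $\widetilde Y$ and adding the effective exceptional $E$ shows $K_{\widetilde Y} + \beta^* L|_Y$ is effective, giving $a(Y, L) \leq 1$.

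For the reverse inequality I would restrict to a general fiber $S$ of $\pi_x|_Y$. By hypothesis $Y$ is not contained in the union of singular fibers, so the assumption that $Y \not\subset \pi_x^{-1}(H_j)$ for any $j$ places $S$ in the smooth locus of $Y$; its strict transform in $\widetilde Y$ is isomorphic to $S$, and $S$ is a smooth cubic surface with $K_S + L|_S \equiv 0$ by Lemma \ref{lemm.fiber}(1). Therefore
\[
(K_{\widetilde Y} + t\beta^* L|_Y)|_S = K_S + t L|_S = (t-1) H_2|_S,
\]
which is anti-ample on $S$ for $t < 1$. Since the fibers $S$ sweep out all of $\widetilde Y$, pseudo-effectiveness of $K_{\widetilde Y} + t\beta^* L|_Y$ would force $(t-1) H_2|_S \cdot H_2|_S^{\dim S - 1} \geq 0$, contradicting $t<1$. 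Combined with the upper bound this gives $a(Y, L) = 1$. Non-adjoint-rigidity then follows immediately: the Iitaka dimension of $K_{\widetilde Y} + \beta^* L|_Y$ is bounded below by that of $\beta^*((3-d)H_1|_Y)$, which is $d \geq 1$, since this is pulled back via $\pi_x|_Y$ from the positive-dimensional base $\bP^d$.

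The main point requiring care is the bookkeeping around singularities of $Y$: one must check that general fibers of $\pi_x|_Y$ are smooth and lie in the smooth locus so that adjunction transfers cleanly between $\widetilde Y$ and $S$, and that the canonical-singularity hypothesis ensures the effective discrepancy $E$ does not spoil either the upper bound on $a$ or the lower bound on the Iitaka dimension. Both of these reduce to the previous two lemmas together with the smoothness of $\pi_x$ over $\bP^3_x \setminus \bigcup_j H_j$.
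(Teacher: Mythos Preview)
Your proof is correct and follows essentially the same approach as the paper: both compute $K_Y + L|_Y \sim (3-d)\,h_1|_Y$ via adjunction (you in $\bP^d\times\bP^3_y$, the paper inside $X$) and deduce $a(Y,L)=1$ and non-adjoint-rigidity from the fact that this class is pulled back from the $d$-dimensional base $\bP^d$. The only difference is packaging: the paper invokes Proposition~\ref{prop-propoties-of-a} to compute $a$ directly on $Y$ (canonical singularities) and leaves the lower bound $a\geq 1$ implicit (since $(3-d)h_1|_Y$ is nef but not big, hence on the boundary of $\Eff^1(Y)$), whereas you work on a resolution and spell out $a\geq 1$ by restricting to a general smooth fiber---an argument that could equally be replaced by a citation of Proposition~\ref{prop-a-dominant-member}.
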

	\begin{proof}
		Since $X$ is Cohen-Macaulay and $Y$ is Cartier,
		we have that $K_Y+L|_Y\sim (3-\dim (\pi_x(Y)))h_1|_Y$ by adjunction.
		Since $Y$ has only canonical singularities by the above two lemmas, we can compute $a(Y,L)$ without taking a resolution of singularities by Proposition \ref{prop-propoties-of-a}.
		Thus the conclusion follows.
	\end{proof}
	
	In the subsequent proof, we will frequently use the following extension theorem of Tsuji and Takayama, which can be seen as a generalization of the Kawamata-Viehweg vanishing.
	
	\begin{theo}{\rm(\cite[Theorem 4.1]{takayama2006pluricanonical})}\label{theo-extension}
		Let $X$ be a smooth projective variety over an algebraically closed field of characteristic $0$ and $S$ be a general member of a generically smooth dominant family of subvarieties of $X$.
		Assume that $L\sim_\bQ A+E$ is an integral divisor on $X$ with $A$ a big and nef $\bQ$-divisor and $E$ an effective $\bQ$-divisor.
		Moreover, assume that $S\not\subset\Supp E$ and $(S,E|_S)$ is a klt pair.
		Then the morphism
		$$H^0(X,m(K_X+S+L))\rightarrow H^0(S,m(K_S+L|_S))$$
		is surjective for any $m>0$.
	\end{theo}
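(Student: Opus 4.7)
The plan is to attack the surjectivity via the adjunction short exact sequence
\begin{equation*}
0 \to \cO_X(m(K_X+S+L)-S) \to \cO_X(m(K_X+S+L)) \to \cO_S(m(K_S+L|_S)) \to 0,
\end{equation*}
where I identify the restriction using the adjunction formula $(K_X+S)|_S=K_S$. The surjectivity is then equivalent to the vanishing of $H^1$ of the left-hand term, whose underlying class is $mK_X+(m-1)S+mL = K_X+(m-1)(K_X+S)+mL$. The obstacle to a direct Kawamata--Viehweg argument is that neither $(m-1)(K_X+S)$ nor $L$ itself is assumed to carry any positivity; only the decomposition $L\sim_\bQ A+E$ supplies a big and nef piece.

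First I would introduce the asymptotic multiplier ideal $\cJ_m := \cJ(\|m(K_X+S+L)\|)$ attached to the graded linear series of $m(K_X+S+L)$, twist the restriction sequence by $\cJ_m$, and apply a Nadel-type vanishing theorem to kill the relevant $H^1$. This is where the hypotheses interact crucially: the klt condition on $(S,E|_S)$ forces $(X,S+E)$ to be purely log terminal along $S$, which, combined with the big and nef part $A$ of $L$, provides enough positivity to feed into the vanishing statement without creating additional singular loci along $S$.

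The heart of the argument is then the restriction theorem for asymptotic multiplier ideals, which gives the inclusion $\cJ_m\cdot\cO_S \subseteq \cJ_S(\|m(K_S+L|_S)\|)$. This allows one to view an arbitrary section of $m(K_S+L|_S)$ on $S$ --- which a priori has no reason to extend to $X$ --- as a section in the right multiplier ideal on $S$, hence as arising from a section on $X$ modulo a cohomology group that has been arranged to vanish. I would close the loop by induction on $m$, using the extensions at level $m-1$ together with general effective elements of $|A|$ to contribute extra positivity and produce the extensions at level $m$.

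The hardest technical step I anticipate is showing that the inclusion in the restriction theorem is tight enough at the level of sections, so that every section of $m(K_S+L|_S)$ on $S$ really does lie in the image of $\cJ_m\cdot\cO_S$ rather than in a strictly larger multiplier ideal; equivalently, one needs to verify that the extension constructed on $X$ restricts back to the given section on $S$, without losing any vanishing orders along $E|_S$. This is the plt (as opposed to klt) refinement of the Demailly--Ein--Lazarsfeld restriction theorem that sits at the core of the Tsuji--Takayama argument, and controlling it uniformly in $m$ is the main obstacle separating a naive strategy from a working proof.
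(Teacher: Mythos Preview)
The paper does not prove this theorem. It is stated as a citation of \cite[Theorem 4.1]{takayama2006pluricanonical} and used as a black box; the only argument supplied in the paper is the paragraph immediately following the statement, which invokes the Lefschetz principle to pass from the original setting over $\bC$ to an arbitrary algebraically closed field of characteristic $0$. So there is nothing in the paper for your proposal to be compared against.

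What you have written is a sketch of Takayama's own argument (and the closely related Hacon--McKernan lifting technique): twist by asymptotic multiplier ideals, apply Nadel vanishing to kill the $H^1$ obstruction in the adjunction sequence, use the plt restriction theorem for multiplier ideals to control $\cJ_m\cdot\cO_S$, and run an induction on $m$. As an outline of the source proof this is accurate in spirit, and you have correctly identified the two genuine difficulties: first, that a direct Kawamata--Viehweg argument fails because $(m-1)(K_X+S)+mL$ carries no a priori positivity; second, that the delicate point is the \emph{tightness} of the multiplier-ideal restriction, which is exactly what requires the plt hypothesis along $S$ (coming from $(S,E|_S)$ klt via inversion of adjunction) rather than merely klt on the ambient pair. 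One minor slip: surjectivity on $H^0$ is implied by, but not equivalent to, the vanishing of $H^1$ of the kernel.

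If your goal is to reproduce the paper's treatment, the correct thing to do is simply cite Takayama and add the Lefschetz principle remark. If your goal is to supply an independent proof of the extension theorem, your plan is the right one, but it is a substantial undertaking and you should be aware that the inductive step and the precise form of the restriction theorem need to be stated and proved carefully, not just invoked.
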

	
	Although the original statement of \cite[Theorem 4.1]{takayama2006pluricanonical} is over $\bC$, it still holds over an arbitrary field of characteristic $0$ by the Lefschetz principle.
	Indeed, the objects in the assumption of Theorem \ref{theo-extension} only involve finitely many elements in the base field, which generate a subfield over $\bQ$ that is isomorphic to a subfield of $\bC$.
	Moreover, the properties appearing in the assumption and conclusion are stable under field extension and descent.
	
	\begin{prop}\label{prop-adjoint-rigid-subvarieties}
		If $Y$ is a closed subvariety of $X$ with $a(Y,L)>1$, then either
		\begin{enumerate}[ label={\rm(\arabic*)}]
			\item $Y$ is contained in the union of singular fibers of $\pi_x$; or
			\item $Y$ is the image of a family of subvarieties of $X$ whose general member is a line in a smooth fiber of $\pi_x$ (including the case where $Y$ itself is a line in a smooth fiber of $\pi_x$).
		\end{enumerate}
		If $Y$ is an adjoint rigid closed subvariety with $a(Y,L)=1$, then either
		\begin{enumerate}[resume,label={\rm(\arabic*)}]
			\item $Y=X$; or
			\item $Y$ is contained in $\bigcup_{i=0}^3 H_i$ (\ie\,the union of singular fibers of $\pi_x$); or 
			\item $Y$ is a smooth fiber of $\pi_x$; or 
			\item $Y$ is a fiber of $\pi_y$; or 
			\item $Y$ is a conic in a fiber of $\pi_x$. 
		\end{enumerate}

	\end{prop}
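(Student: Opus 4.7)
The plan is a case analysis on the image $W := \pi_x(Y) \subset \bP^3_x$, using three main inputs: Lemma \ref{lemm.fiber} (controlling the $a$-invariants and adjoint rigidity of all $\pi_x$-fibers), Proposition \ref{prop.linearspace} (controlling preimages of linear subspaces), and the classical geometry of a smooth cubic surface, on which lines are the only positive-dimensional subvarieties with $a$-value $>1$ and conics (together with the surface itself) are the only adjoint-rigid subvarieties with $a$-value $1$.

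For the first statement, I would first reduce to the case where $(Y, L|_Y)$ is adjoint rigid. If not, Proposition \ref{prop-adjoint-rigid} applied to the rational map $Y \dashrightarrow Z$ associated to $K_Y + a(Y, L|_Y) L|_Y$ gives a dominant family of adjoint-rigid subvarieties $E \subset Y$ with $a(E, L|_E) = a(Y, L|_Y) > 1$; once each such $E$ is classified as either a line in a smooth $\pi_x$-fiber or contained in a singular $\pi_x$-fiber, $Y$ itself falls into case (1) or case (2) by taking the union over the Iitaka base. In the adjoint-rigid case with $a(Y, L|_Y) > 1$, if $W \subset \bigcup_i H_i$ we are in case (1); otherwise, the generic $\pi_x$-fiber $S_w$ is a smooth cubic surface with $a(S_w, L|_{S_w}) = 1$ by Lemma \ref{lemm.fiber}(1). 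If $\pi_x|_Y$ is generically finite, I would derive a contradiction to $a(Y, L|_Y) > 1$ by applying Proposition \ref{prop-a-dominant} to $Y \to W$ together with an extension of Proposition \ref{prop.linearspace} to arbitrary $W$ via the cubic-surface fibration $\pi_x^{-1}(W) \to W$. If $\pi_x|_Y$ has positive-dimensional general fibers, adjoint rigidity of $Y$ combined with Proposition \ref{prop-adjoint-rigid} applied to $\pi_x|_Y$ forces $a(Y_w, L|_{Y_w}) = a(Y, L|_Y) > 1$ on $S_w$, so $Y_w$ must be a line on the smooth cubic surface, and case (2) follows.

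For the second statement (adjoint-rigid $Y$ with $a(Y, L|_Y) = 1$) the same case analysis applies: the three easy subcases $W = \bP^3_x$, $W \subset \bigcup H_i$, and $W$ a point not on $\bigcup H_i$ yield respectively $Y = X$ (case (3), after eliminating proper dominant adjoint-rigid subvarieties using Proposition \ref{prop-adjoint-rigid-2}), case (4), and either case (5) or case (7) (reading off the adjoint-rigid subvarieties with $a = 1$ of a smooth cubic surface). The substantive remaining case is $W$ of positive dimension not contained in $\bigcup H_i$. Here the key claim is that the only adjoint-rigid $Y$ with $a(Y, L|_Y) = 1$ are the fibers of $\pi_y$, which project onto planes in $\bP^3_x$ and are $\bP^2$'s with $L|_Y = \cO(3)$, giving $a = 1$ and adjoint rigidity. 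To rule out other candidates I would invoke Proposition \ref{prop-large-a} to constrain the geometry of $(Y, L|_Y)$ and Proposition \ref{prop-a-dominant-member} applied to the family of $\pi_x$-fibers of $Y$ to control the general fiber.

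The main obstacle I expect is precisely this last case, where $W$ is a positive-dimensional non-linear subvariety of $\bP^3_x$. The difficulty is that the adjoint rigidity condition forces the Iitaka dimension of $K_Y + L|_Y$ to be zero, while $\pi_x|_Y: Y \to W$ imposes a competing fibration structure; ruling out sporadic subvarieties requires showing these two structures are incompatible unless $Y$ is a $\pi_y$-fiber. I anticipate that the extension theorem (Theorem \ref{theo-extension}) will be the key technical tool here, used to transfer pluricanonical sections from $(Y_w, L|_{Y_w})$ on the generic fiber back to $(Y, L|_Y)$, thereby pinning down $Y$ and completing the classification.
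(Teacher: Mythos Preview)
Your decomposition by $W=\pi_x(Y)$ is natural, but two of the steps you label ``easy'' are precisely where the work lies, and the propositions you cite do not deliver what you need.

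First, in the adjoint-rigid case with $a(Y,L)>1$ and positive-dimensional $\pi_x$-fibers, you invoke Proposition~\ref{prop-adjoint-rigid} ``applied to $\pi_x|_Y$'' to get $a(Y_w,L)=a(Y,L)>1$. But that proposition is about the Iitaka fibration of $(Y,K_Y+a(Y,L)L)$, which for an adjoint-rigid $Y$ is the constant map, not $\pi_x|_Y$. The only general inequality available is $a(Y_w,L)\le a(Y,L)$ from Proposition~\ref{prop-a-dominant-member}, which goes the wrong way. Likewise, in the generically-finite subcase you appeal to an ``extension of Proposition~\ref{prop.linearspace} to arbitrary $W$'': that proposition is proved only for lines and planes via explicit canonical-singularity computations, and even granting an analogue for general $W$ it would bound $a(\pi_x^{-1}(W),L)$, not $a(Y,L)$ for a proper $Y\subsetneq\pi_x^{-1}(W)$; neither Proposition~\ref{prop-a-dominant} nor~\ref{prop-a-dominant-member} bridges that gap. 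Second, in the $a=1$ statement you put $W=\bP^3_x$ among the easy cases and cite Proposition~\ref{prop-adjoint-rigid-2} to force $Y=X$. That proposition concerns images of Iitaka fibers under a generically finite \emph{cover} of $X$; it says nothing about ruling out proper adjoint-rigid $Y\subsetneq X$ with $\pi_x(Y)=\bP^3_x$ (these exist a priori in dimensions $3$ and $4$).

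The paper's proof is organized instead by $\dim Y$, and the engine is Theorem~\ref{theo-extension} used much earlier than you anticipate: in every dimension $\ge 3$ one slices $\widetilde Y$ by one or two general members of $|h_1|$, uses the extension theorem to push $h^0$ bounds down to the resulting surface $Y_2$, and then finishes with Proposition~\ref{prop-large-a} together with the explicit intersection-number bound $K_{Y_2}^2\ge 10$ (resp.\ $\ge 13$) to contradict $Y_2$ being weak del Pezzo. In particular, the cases you call easy ($a>1$ with $\dim Y\ge 3$, and $W=\bP^3_x$ with $a=1$) are exactly Case~4.B and its dimension-$3$ analogue in the paper, and they already require the extension-theorem machinery you reserved for the end.
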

	
	\begin{proof}
		Let $Y$ be a subvariety of $X$.
		We divide the proof into several cases based on the dimension of $Y$.
		Since the projection maps are flat and $X$ dominates each $\bP^3$, by dimension counting we have that $\dim\pi_i(Y)\geq \dim Y-2$.
		We will use this formula to make a further classification in each case.
		
		We do not need a full classification of $Y$ in the cases (1) and (4) of Proposition \ref{prop-adjoint-rigid-subvarieties} through out this paper, so
		we may assume that $Y$ is not contained in the union of singular fibers of $\pi_x$ throughout the proof.
		
		\textbf{Dimension 5.}
		We have that $X$ is adjoint rigid with $a(X,L)=1$.
		
		\textbf{Dimension 4.}
		When $\dim Y=4$, we have that $2\leq\dim\pi_i(Y)\leq 3$ for $i=x$ or $y$.
		
		\textbf{Case 4.A.}
		Suppose that $Y$ dominates neither of $\bP^3$'s, then $Y\cong S_1\times S_2$ for some surfaces $S_1$ and $S_2$. This is impossible since then $\pi_y(\pi_x^{-1}(p))$ will be constant with respect to $p\in S_1$, but by the definition of $X$, there are no two points in $\bP^3_x$ correspond to the same surface in $\bP^3_y$.
		
		\textbf{Case 4.B.}
		Now suppose that $Y$ dominates $\bP^3_x$.
		
		Let $\rho:\widetilde{Y}\rightarrow Y$ be a resolution of singularities of $Y$ and abuse notations of $\pi_i$ and $h_i$.
		Then $\widetilde{Y}$ is smooth and irreducible.
		Let $Y_1$ be the intersection of $\widetilde{Y}$ with a general member of $\left| h_1 \right|$.
		We may assume that $Y_1$ is smooth and irreducible by Bertini's theorem.
		
			
			
			We may write $2h_1+h_2\sim_\bQ A+E$, where $A\sim_\bQ 2h_1+\frac{1}{2}h_2$ is big and nef and $E\sim_\bQ \frac{1}{2}h_2$ is effective.
			We may assume that $2E|_{Y_1}$ is a smooth and integral divisor of $Y_1$ by Bertini's theorem.
			Then $(Y_1,E|_{Y_1})$ is a klt pair by \cite[Corollary 2.31]{Kollr1998},
			and Theorem \ref{theo-extension} implies that
			$$h^0(Y,m(K_{\widetilde{Y}}+3h_1+h_2))\geq h^0(Y_1,m(K_{Y_1}+2h_1+h_2))$$
			for any $m>0$.
			Let $Y_2$ be the intersection of $Y_1$ with a general member of $\left| h_1 \right|$.
			By the same token, we have that
			$$h^0(Y,m(K_{\widetilde{Y}}+3h_1+h_2))\geq h^0(Y_2,m(K_{Y_2}+h_1+h_2))$$
			for any $m>0$.
			This shows that $a(Y,L)\leq a(Y_2,h_1+h_2)$ and $\kappa(\widetilde{Y},K_{\widetilde{Y}}+L)\geq \kappa(Y_2,K_{Y_2}+h_1+h_2)$.
			
			Suppose that $a(Y,L)>1$. We have that $a(Y_2,h_1+h_2)>1$.
			Let $\nu:Y_2^\nu\rightarrow Y_2^\prime$ be the normalization map of the image $Y_2^\prime$ of $Y_2$ in $X$, then $h_1+h_2$ is ample on $Y_2^\nu$ since a normalization map is finite.
			So the polarized variety $(Y_2^\nu,h_1+h_2)$ is isomorphic to one of the four cases listed in Proposition \ref{prop-large-a} (in dimension $n=2$).
			Since there is a surjective morphism $\pi_x:Y_2^\nu\rightarrow \bP^1$ (where we abuse the notation of $\pi_x$), we can not have $Y_2^\nu\cong \bP^2$.
			Thus case (1) and (3) in Proposition \ref{prop-large-a} are impossible.
			So there exists a rational map from $(Y_2^\nu,h_1+h_2)$ to a smooth curve $C$ whose general fiber $f$ has intersection $1$ with $(h_1+h_2)$.
			Thus either $f\cdot h_1=1$ and $f\cdot h_2=0$, or $f\cdot h_1=0$ and $f\cdot h_2=1$. 
			In other words, the image of $f$ in $X$ is an $h_1$-line in a fiber of $\pi_y$, or an $h_2$-line in a fiber of $\pi_x$.
			Suppose that it is the former.
			Then, we must have $\dim(\pi_y(Y_2))=1$ since $\dim(Y_2)=2$.
			Hence $Y_2$ is isomorphic to $C_1\times C_2$ for some line $C_1\subset \bP_x^3$ and curve $C_2\subset \bP_y^3$.
			By the generality in the definition of $Y_2$, this implies that $Y$ is isomorphic to $\bP_x^3\times C_2$.
			But there is no such $Y$ inside $X$, as it is easy to find three smooth fibers of $\pi_x$ whose intersection is empty.
			So we conclude that the image of $f$ in $X$ is an $h_2$-line in a fiber of $\pi_x$.
			Let $\alpha: \widetilde{Y}\ra B\ra \pi_x(Y)$ be the Stein factorization of $\pi_x|_{Y}\circ\rho$ and let $B_2$ be the intersection of $B$ with the pullback of the same two $h_1$ as in the definition of $Y_2$.
			Then we have the following diagram:
			\[\begin{tikzcd}
				C & {Y_2} & {Y_2^\prime} \\
				& {B_2} & {C_1}
				\arrow["\pi"', dashed, from=1-2, to=1-1]
				\arrow["{\rho_2}", from=1-2, to=1-3]
				\arrow["{\alpha_2}"', from=1-2, to=2-2]
				\arrow["{\pi_x|_{Y_2^\prime}}", from=1-3, to=2-3]
				\arrow[from=2-2, to=2-3]
			\end{tikzcd}\]
			Since $\alpha_2$ contracts a general fiber of $\pi$, there exists a rational map $\varphi:C\dasharrow B_2$ by rigidity lemma \cite[Lemma 1.15]{Debarre2001}.
			Since a general fiber of both $\pi$ and $\alpha_2$ is smooth and connected, the map $\varphi$ is birational.
			By generality of the two $h_2$ in the definition of $Y_2$, we conclude that $\rho(\alpha^{-1}(b))$ is a line in a smooth $\pi_x$-fiber for a general closed point $b\in B$.

			We have shown that if $Y$ is in neither case (1) or (2) of Proposition \ref{prop-adjoint-rigid-subvarieties}, then $a(Y_2,h_1+h_2)\leq1$.
			Now suppose that $a(Y_2,h_1+h_2)=1$ and that $(Y_2,h_1+h_2)$ is adjoint rigid.
			Let $\nu:Y_2^\nu\rightarrow Y_2^\prime$ be the normalization map of the image $Y_2^\prime$ of $Y_2$ in $X$, then $h_1+h_2$ is ample on $Y_2^\nu$.
			It follows by \cite[Lemma 5.3]{Lehmann2019} that $Y_2^\nu$ has canonical singularities and $-K_{Y_2^\nu}\equiv h_1+h_2$.
			Thus $Y_2$ is a weak del Pezzo surface, and we have that $-K_{Y_2}\equiv h_1+h_2$ since the resolution is crepant.
			Moreover, since $Y$  dominates $\bP^3_x$, we have that $Y\equiv ah_1+bh_2$ for some $a\geq0$ and $b\geq1$.
			Suppose that $Y\equiv h_2$.
			Then a general fiber of $(\pi_x|_{Y}\circ \rho)|_{Y_2}:Y_2\ra \bP^1$ is a smooth curve of genus $1$.
			So $Y_2$ is an elliptic surface, which can not be a weak del Pezzo surface since $K_{Y_2}^2\leq0$.
			Thus we have that $a,b\geq1$ and so
			$$K_{Y_2}^2= (h_1+h_2)^2 h_1^2 (ah_1+bh_2)(h_1+3h_2) = 3a+7b \geq 10,$$
			but the volume of the anticanonical divisor of a weak del Pezzo surface must be $\leq9$.
			We conclude that if $a(Y_2,h_1+h_2)=1$, then $\kappa(Y,K_Y+L)\geq\kappa(Y_2,K_{Y_2}+h_1+h_2)\geq1$, \ie, $(Y,L)$ is not adjoint rigid.

			\textbf{Case 4.C.}
			The remaining case is when $Y$ maps to a surface in $\bP^3_x$ and  dominates $\bP^3_y$, which is equivalent to $Y\equiv ch_1$ for some positive integer $c$.
			In this case, $Y_2$ may be reducible and we denote its irreducible components by $S_j$ $(j=1\cdots,m)$.
			Note that $S_j$ equals $\pi_x^{-1}(p_j)$ for some closed point $x_j\in\bP_x^3$.
			As we assumed that $Y$ is not contained in $\bigcup_{i=0}^3 H_i$, we may further assume that 
			$S_j$ is a smooth fiber of $\pi_x$ for each $j$ by the generality of the two hyperplane sections in the definition of $Y_2$.
			By the Kawamata-Viehweg vanishing, we have that
			\begin{equation}
				\begin{split}\label{eq1}
					h^0(\widetilde{Y},K_{\widetilde{Y}}+3h_1+h_2)\geq &h^0(Y_2,K_{Y_2}+h_1+h_2)\\
					=&\sum_{j=1}^{m}h^0(S_j,K_{S_j}+h_2).
				\end{split}
			\end{equation}
			Suppose that the right-hand side of (\ref{eq1}) is zero. Then it follows by Lemma \ref{lemm.fiber} that each $x_j$ is contained in some $H_i$,
			which contradicts our assumption on $S_j$.
			
			
			Thus if $Y$ is not contained in the union of singular fibers of $\pi_x$, then
			$$h^0(\widetilde{Y},K_{\widetilde{Y}}+3h_1+h_2)\geq 1,$$
			which implies that $a(Y,L)\leq 1$.
			Moreover, since $Y_2$ is defined by general hyperplane sections, we may assume that each $S_j$ is a smooth $\pi_x$-fiber.
			Then, the right-hand side of (\ref{eq1}) equals $c$.
			When $h^0(\widetilde{Y},K_{\widetilde{Y}}+3h_1+h_2)\geq2$, $Y$ is not adjoint rigid by definition.
			When $h^0(\widetilde{Y},K_{\widetilde{Y}}+3h_1+h_2)=1$ we must have $c=1$ by (\ref{eq1}).
			In this case, Proposition \ref{prop.linearspace} and the assumption that $Y$ is not contained in the union of singular fibers of $\pi_x$ implies that $Y$ is not adjoint rigid.
			
			\textbf{Dimension $3$}.
			Suppose that $\pi_x(Y)$ has dimension $3$ or $2$.
			The argument in this case is similar to that in Case 4.
			Let $Y_1$ be the intersection of $\widetilde{Y}$ with a general member of $\left|h_1\right|$, and let $Y_2$ be the intersection of $\widetilde{Y}$ with two general member of $\left|h_1\right|$.
			As in Case 4.B, Theorem \ref{theo-extension} implies that
			$$h^0(\widetilde{Y},m(K_{\widetilde{Y}}+3h_1+h_2))\geq h^0(Y_2,m(K_{Y_2}+h_1+h_2))$$
			for any $m\geq1$.
			Suppose that $\pi_x(Y)$ has dimension $3$.
			The situation is similar to that in Case 4.B, but we can use a simpler argument.
			We may assume that $Y_2$ is a smooth and irreducible curve by Bertini's theorem.
			By Lefschetz hyperplane theorem \cite[Theorem 3.1.17]{Lazarsfeld2004},
			$Y$ is numerical equivalent to $ah_1^2+bh_1h_2+ch_2^2$ for some non-negative integers $a,b,$ and $c$.
			Since we assumed that $\dim(\pi_x(Y))=3$, we have that $c\geq1$.
			Then a direct computation of intersection numbers shows that $\deg(h_1+h_2)=3b+4c\geq4$ on $Y_2$.
			This and the Riemann-Roch theorem imply that
			$$h^0(Y_2,K_{Y_2}+h_1+h_2)=h^0(Y_2,-h_1-h_2)+\deg(h_1+h_2)-1+g(Y_2)\geq 3.$$
			Therefore, we have that $a(Y,L)\leq1$, and when $a(Y,L)=1$, the variety $Y$ is not adjoint rigid.
			
			Suppose that $\pi_x(Y)$ has dimension $2$.
			There are several ways to conclude that there is no such $Y$ satisfying the assumptions of Proposition \ref{prop-adjoint-rigid-subvarieties} by mimicking the above arguments.
			A quick way is to notice that $Y\equiv ah_1^2+bh_1h_2$ for some integers $a\geq0$ and $b\geq1$.
			Since $2h_1+h_2$ is ample on $Y_1^\nu$, as above, \cite[Lemma 5.3]{Lehmann2019} implies that either
			\begin{itemize}
				\item $a(Y_1^\nu,2h_1+h_2)<1$; or
				\item the pair $(Y_1^\nu,2h_1+h_2)$ is not adjoin rigid; or
				\item $Y_1$ is a weak del Pezzo surface and $-K_{Y_1}\equiv 2h_1+h_2$.
			\end{itemize}
			Suppose that either $a(Y,L)>1$, or $(Y,L)$ is adjoint rigid with $a(Y,L)=1$, then the only possibility is the third case.
			But a computation of intersection numbers shows that
			$$K_{Y_1}^2=(2h_1+h_2)^2(ah_1^2+bh_1h_2)h_1(h_1+3h_2)=3a+13b\geq13,$$
			which contradicts the fact that $Y_1$ is a weak del Pezzo surface.

			%

			When $\pi_x$ maps $Y$ to a curve and $\pi_y$ maps $Y$ to a surface, we must have $Y^\prime\cong C\times S$ for some curve $C$ and surface $S$, but there is no such subvariety of $X$.
			
			So the only remaining case is when $\pi_x$ maps $Y$ to a curve and $\pi_y$ is dominant,  which means that $Y=ch_1^2$ for some positive integer $c$.
			The argument is almost the same as in Case.4.C, except that we take a general hyperplane section of $Y$ only once.
			We conclude that $a(Y,L)\leq1$ if $Y$ is not contained in the union of singular fibers of $\pi_x$,
			and that when $a(Y,L)=1$, the pair $(Y,L)$ is not adjoint rigid if $Y\not\equiv h_1^2$.
			When $Y\equiv h_1^2$, we have that $Y$ is also not adjoint rigid by Proposition \ref{prop.linearspace}.
			
			\textbf{Dimension $2$}.
			We have three cases. 
			If $Y$ is a fiber of $\pi_y$, then $Y\cong \bP^2$. We have that $(Y,L)$ is adjoint rigid with $a(Y,L)=1$.
			If both $\pi_x$ and $\pi_y$ maps $Y$ to a curve, then the volume of $L$ is
			$$\vol(L)=L^2\cdot Y=(9h_1^2+6h_1h_2+h_2^2)Y>9.$$
			By \cite[Lemma 5.3]{Lehmann2019}, either $(Y,L)$ is not adjoint rigid or $a(Y,L)<1$.
			The remaining case is when $Y$ is a fiber of $\pi_x$.
			But this case is covered by cases (4) and (5) of Proposition \ref{prop-adjoint-rigid-subvarieties}.
			See Lemma \ref{lemm.fiber} for a further analysis of this cases.
			
			\textbf{Dimension $1$}.
			When $Y$ is a curve, the assumption $a(Y,L)\geq1$ implies that $Y$ is rational. So $a(Y,L)=2/(Y\cdot L)\geq1$ if and only if $Y\cdot h_1=0$ and $Y\cdot h_2=2$ or $1$.
			In other words, $Y$ is a conic or line in a fiber of $\pi_x$.
			When $Y$ is a line, the pair $(Y,L)$ is adjoint rigid with $a(Y,L)=2$.
			When $Y$ is a conic, $(Y,L)$ is adjoint rigid with $a(Y,L)=1$.
		\end{proof}
		
		
		\begin{rema}
			Although we do not have a complete classification when $Y$ is contained in the union of singular fibers of $\pi_x$, this does not affect the computation of the geometric exceptional set $Z$ since singular fibers of $\pi_x$ are covered by $h_2$-lines, which are adjoint rigid with $a$-invariant equal to $1$, and so all rational points on them are included in $Z$. 
		\end{rema}

		\subsection{Adjoint rigid $a$-covers of $X$}\label{sec-3.2}
		
		\begin{defi}
			Let $f:Y\rightarrow X$ be a dominant generically finite morphism of smooth projective varieties of degree $\geq2$ and $L$ be a big and nef $\bQ$-divisor on $X$.
			We say that $f:Y\rightarrow X$ is an adjoint rigid $a$-cover of $(X,L)$ if $(Y,f^\ast L)$ is adjoint rigid and $a(Y,f^\ast L) = a(X,L)$.
		\end{defi}
		
		The goal of this subsection is to show the following theorem.
		\begin{theo}\label{theo-a-cover}
			With the notations in Notation \ref{notation}, there does not exist adjoint rigid $a$-cover of $(X,L)$.
		\end{theo}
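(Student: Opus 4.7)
The plan is to argue by contradiction: suppose $f : Y \to X$ is an adjoint rigid $a$-cover of degree $d \ge 2$. Using $L = -K_X$, Riemann-Hurwitz gives $K_Y + f^*L = R$, where $R$ is the ramification divisor of $f$; the adjoint rigidity assumption becomes $\kappa(Y, R) = 0$. First, I would observe that $R \ne 0$: since $X$ is a smooth Fano variety over a field of characteristic $0$, it is algebraically simply connected by Koll\'ar-Miyaoka-Mori, so $f$ must be ramified, and the reduced branch locus $B := f(R)_{\mathrm{red}} \subset X$ is a nonzero effective divisor.

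Next I would show $\kappa(X, B) \ge 3$. By the Lefschetz hyperplane theorem $N^1(X) = \bZ h_1 \oplus \bZ h_2$, and both generators span the effective cone, so $B \equiv m h_1 + n h_2$ with $m, n \ge 0$ and $(m,n) \ne (0,0)$. The projection $\pi_x : X \to \bP^3_x$ is a proper surjection with geometrically irreducible generic fiber (Remark \ref{rema-algebraic-fiber-space}), so $\kappa(X, h_1) = \kappa(\bP^3_x, \cO(1)) = 3$; similarly $\kappa(X, h_2) = 3$ via $\pi_y$. Since $B$ dominates $h_1$ or $h_2$ as an effective divisor, $\kappa(X, B) \ge 3$. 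I would then compare $R$ with $f^*B$ through the termwise inequality $2R \ge f^*B$: writing $f^*B = \sum e_i E_i$ and $R = \sum (e_i - 1) E_i$ over ramified components with indices $e_i \ge 2$, the elementary bound $e_i \le 2(e_i - 1)$ yields the inequality. Using invariance of Iitaka dimension under finite covers,
\[
\kappa(Y, R) \;\ge\; \kappa\bigl(Y, \tfrac{1}{2} f^* B \bigr) \;=\; \kappa(Y, f^*B) \;=\; \kappa(X, B) \;\ge\; 3,
\]
contradicting $\kappa(Y, R) = 0$.

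The hard part will be establishing the termwise inequality $2R \ge f^* B$ when $f$ fails to be Galois: in that case $f^{-1}(B)$ may contain irreducible components over which $f$ is unramified, which contribute to $f^* B$ but not to $R$. To circumvent this, I would first pass to the Galois closure $\widetilde f : \widetilde Y \to X$ of $f$, where the Galois-orbit structure on preimages forces every component of $\widetilde f^{-1}(B)$ to be ramified, making the inequality clean. I would then transfer the resulting bound $\kappa(\widetilde Y, R_{\widetilde Y}) \ge 3$ back to $R_Y$ via the intermediate finite map $\alpha : \widetilde Y \to Y$, using the ramification decomposition $R_{\widetilde Y} = \alpha^* R_Y + R_\alpha$ together with invariance of Iitaka dimension under $\alpha$; verifying this descent step carefully is where the most technical care will be required.
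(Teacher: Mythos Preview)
Your overall plan is quite different from the paper's proof. The paper slices $Y$ twice by general members of $f^*\lvert h_1\rvert$, uses Takayama's extension theorem (Theorem~\ref{theo-extension}) to propagate the bound $\kappa(Y,K_Y+f^*L)=0$ down to the resulting threefold $Y_2\to X_2$, and then invokes the known non-existence of adjoint rigid $a$-covers of smooth Fano threefolds from \cite{beheshti2022moduli}. Your approach via the ramification divisor avoids both the extension theorem and that citation, which would be attractive if it went through.

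The first several steps are essentially fine: $R\ne 0$ follows from simple connectedness of $X$; the description of $\Eff^1(X)$ gives $B\equiv mh_1+nh_2$ with $m,n\ge 0$; and since on a Fano variety numerical and linear equivalence coincide, one gets $\kappa(X,B)\ge 3$ whenever $B\ne 0$. (A small point you glossed over: to know that $B=f(R)_{\mathrm{red}}$ has a divisorial component you need Zariski--Nagata purity, since $f$ is only generically finite and could a priori contract every component of $R$; this is easily fixed.) The inequality $2R\ge f^*B$ in the Galois case is also correct.

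The real problem is the descent step, and it is not a matter of ``technical care'' but a wrong-direction inequality. From $R_{\widetilde Y}=\alpha^*R_Y+R_\alpha$ with $R_\alpha\ge 0$ you only obtain
\[
\kappa(\widetilde Y,R_{\widetilde Y})\;\ge\;\kappa(\widetilde Y,\alpha^*R_Y)\;=\;\kappa(Y,R_Y),
\]
which is perfectly consistent with $\kappa(Y,R_Y)=0$ and $\kappa(\widetilde Y,R_{\widetilde Y})\ge 3$. Invariance of Iitaka dimension under $\alpha$ applies to divisors of the form $\alpha^*D$, but $R_\alpha$ is \emph{not} pulled back from $Y$. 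Concretely, if over some branch component $B_1\subset X$ the map $f$ has ramification type $(2,1,\dots)$, the unramified sheet $E\subset Y$ satisfies $E\not\subset\Supp(R_Y)$, yet every component of $\alpha^{-1}(E)$ is ramified for $\alpha$ (since in the Galois closure all sheets over $B_1$ have the same index $\ge 2$) and hence appears in $R_\alpha$. Thus $R_{\widetilde Y}$ carries components invisible to $R_Y$, and there is no mechanism to push the lower bound on $\kappa(\widetilde Y,R_{\widetilde Y})$ back down to $Y$. A secondary issue is that the normal Galois closure may be singular, and after resolving you lose the group action that made $2R_{\widetilde Y}\ge\widetilde f^*B$ clean; this can be handled with care, but the descent obstruction above cannot.
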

		
		In \cite[Theorem 5.5]{beheshti2022moduli}, the authors show that there is no adjoint rigid $a$-cover of Fano threefolds with anticanonical polarization.
		We will use the extension theorem (Theorem \ref{theo-extension}) to reduce our case to that of Fano threefolds.

		\begin{proof}[Proof of Theorem \ref{theo-a-cover}]
			We may assume that the base field is algebraic closed by Proposition \ref{prop-propoties-of-a}.
			Let $f:Y\rightarrow X$ be an adjoint rigid $a$-cover of $(X,3h_1+h_2)$.
			Let $Y_1$ be a general member of $f^\ast \left|h_1 \right|$, then $f$ induces a surjective generally finite morphism $f_1:Y_1\rightarrow X_1$ where we can assume that $X_1$ is a general member of $\left|h_1\right|$. By Bertini's theorem, we can assume that $Y_1$ and $X_1$ are smooth and irreducible.
			
			We may write $f^\ast(2h_1+h_2)\sim_\bQ A+E$, where $A:=f^\ast(2h_1+\frac{1}{2}h_2)$ is big and nef and $E:=\frac{1}{2}f^\ast(h_2)$ is effective.
			It is easy to check the remaining assumptions in Theorem \ref{theo-extension} are satisfied and thus we obtain
			$$h^0(Y_1,m(K_{Y_1}+2h_1+h_2))\leq h^0(Y,m(K_Y+3h_1+h_2))$$
			for any $m>0$. This shows that $\kappa(Y_1,K_{Y_1}+2h_1+h_2)\leq\kappa(Y,K_Y+3h_1+h_2)=0$.
			By the same token, we can show that
			$$\kappa(Y_2,K_{Y_2}+h_1+h_2)\leq\kappa(Y,K_Y+3h_1+h_2)=0,$$
			where $Y_2$ is a complete intersection of two general members of $f^\ast\left| h_1 \right|$.
			By Riemann-Hurwitz formula, we have that $K_{Y_2}+h_1+h_2\equiv K_{Y_2}-f^\ast K_{X_2} \equiv R$ for some effective divisor $R$,
			which forces that $\kappa(Y_2,K_{Y_2}+h_1+h_2)=0$.
			This shows that $a(Y_2,h_1+h_2)=1$ since the divisor $K_{Y_2}+h_1+h_2$ lies on the pseudo-effective boundary.
			Moreover, $X_2$ is a Fano threefold since $-K_{X_2}\equiv h_1+h_2$ is ample.
			Thus $f_2:Y_2\rightarrow X_2$ is an adjoint rigid $a$-cover of a Fano threefold with anticanonical polarization, which does not exist by \cite[Theorem 5.5]{beheshti2022moduli}.
		\end{proof}

		\subsection{$b$-invariant of subvarieties of $X$}\label{sec-3.3}
		The study of the Picard rank of smooth diagonal cubic surfaces can be traced back to Segre (see also \cite[Exercise 3.12]{debarre2017geometry}).
		\begin{lemm}
			{\rm (\cite[Section III]{Seg43})}\label{lemm-segre}
			Let $S$ be the smooth cubic surface in $\bP^3$ defined over a field $F$ of characteristic $\neq 3$ by the equation
			$$a_1y_1^3+a_2y_2^3+a_3y_3^3+a_4y_4^3=0,$$
			with $a_1a_2a_3a_4\neq0$.
			Then the Picard rank of $S$ is $1$ if and only if
			$$\frac{a_{\tau(1)}a_{\tau(2)}}{a_{\tau(3)}a_{\tau(4)}}$$
			is not a cube in $F$ for any permutation $\tau\in S_4$.
		\end{lemm}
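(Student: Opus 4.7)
The plan is to prove the lemma by directly analyzing the Galois action on the $27$ lines of the cubic surface $\overline{S}=S_{\overline{F}}$, using the standard fact that $\Pic(S)_{\bQ}=\Pic(\overline{S})_{\bQ}^{\Gal(\overline{F}/F)}$ and that the $27$ lines span $\Pic(\overline{S})_{\bQ}$.

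First I would explicitly enumerate the $27$ lines. Each line on the diagonal cubic arises from a partition of $\{1,2,3,4\}$ into two unordered pairs $\{i,j\},\{k,l\}$ (three partitions in total) together with a choice of cube roots $\alpha^3=-a_j/a_i$ and $\beta^3=-a_l/a_k$; the line is cut out by $y_i=\alpha y_j,\ y_k=\beta y_l$. This gives $3\times 3\times 3=27$ lines. I would then describe their splitting field: it is generated over $F$ by a primitive cube root of unity $\omega$ and by cube roots of $-a_j/a_i$ and $-a_l/a_k$ for each of the three partitions. Next I would identify the Galois orbits: the absolute Galois group permutes the three $\alpha$'s belonging to a given pair by multiplication by $\omega$, and similarly for the $\beta$'s, so for each partition the nine associated lines split into orbits governed by the images in $F^{\times}/(F^{\times})^{3}$ of $-a_j/a_i$ and $-a_l/a_k$.

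The key algebraic observation is the relation $(-a_j/a_i)\cdot(-a_l/a_k)\equiv a_ia_j\cdot(a_ka_l)^{-1}\pmod{(F^\times)^3}$ (using $a_{\sigma}^3\equiv 1$). Thus whether $a_ia_j/(a_ka_l)$ is a cube in $F$ controls whether the $\Gal$-action on the nine lines of a fixed partition factors through a smaller quotient, and this in turn controls whether some $\Gal$-invariant $\bQ$-combination of those nine lines lies outside $\bQ H$. Concretely, if $a_1a_2/(a_3a_4)=c^3$ in $F$, I would produce an invariant divisor class not proportional to $H$: the nine lines corresponding to the partition $\{\{1,2\},\{3,4\}\}$ form a union of $\Gal$-orbits whose characters give a $\Gal$-invariant subdivisor of degree less than the hyperplane degree, hence a new class in the invariant Picard lattice. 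Running this construction for each of the three partitions shows that if any of the three ratios is a cube, then $\rk\Pic(S)\geq 2$.

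Conversely, assuming none of the three ratios $a_ia_j/(a_ka_l)$ is a cube, I would show that the only $\Gal$-invariant $\bQ$-classes are multiples of $H$. The strategy is to write an arbitrary invariant class in the basis of the $27$ lines, use the explicit orbit decomposition (the non-cube hypothesis forces the orbits of lines within each partition to have size divisible by $3$, and forces nontrivial mixing across partitions via $\omega$), and deduce that the coefficients must be constant on the full set of lines in each partition; since $H$ is a rational sum of the three lines meeting in a tritangent plane, constancy reduces the invariant space to $\bQ H$. The main obstacle is the careful bookkeeping in this last step: one must avoid accidental dependencies among partial line sums and verify that the three ``cube'' obstructions are genuinely independent, which ultimately relies on the fact that $a_1a_2a_3a_4\neq 0$ ensures the $27$ lines are genuinely distinct and span a lattice of rank $7$. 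I would then conclude by combining both directions to obtain the stated equivalence.
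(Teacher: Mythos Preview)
The paper does not give its own proof of this lemma; it simply cites Segre \cite[Section~III]{Seg43} (and in Remark~\ref{rema-segre} points to \cite{PT00} for the finer stratification). Your proposal is essentially an outline of Segre's classical argument, so the overall strategy is appropriate.

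That said, there is a concrete bookkeeping slip and one genuine gap to flag. The identity you call the ``key algebraic observation'' is miscomputed: for the partition $\{\{i,j\},\{k,l\}\}$ one has
\[
(-a_j/a_i)\cdot(-a_l/a_k)=\frac{a_ja_l}{a_ia_k},
\]
which is the ratio attached to the \emph{other} partition $\{\{i,k\},\{j,l\}\}$, not to $\{\{i,j\},\{k,l\}\}$. Concretely, the cube condition on $a_1a_2/(a_3a_4)$ governs the orbit structure of the nine lines coming from $\{\{1,3\},\{2,4\}\}$ (or $\{\{1,4\},\{2,3\}\}$), not from $\{\{1,2\},\{3,4\}\}$. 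Since the lemma is symmetric in the three ratios, this does not damage the logic, but your explicit construction of an invariant divisor in the ``if'' direction needs to be relabelled accordingly.

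The more substantive gap is the converse. Saying that the non-cube hypothesis forces orbits of size divisible by~$3$ within each partition is correct, but it does not by itself imply that the $\Gal$-invariant part of $\Pic(\overline{S})_{\bQ}$ is $\bQ H$: the $27$ lines span a rank~$7$ lattice with many relations, and one must rule out invariant classes arising from cross-partition combinations. In practice one fixes a basis (e.g.\ six disjoint lines plus $H$, or the $E_6$ root lattice description), identifies the image of $\Gal$ in $W(E_6)$ explicitly, and checks that the invariant subspace is one-dimensional---this is where Segre's computation, or the character-theoretic argument in \cite{PT00}, does the real work. You should also treat the case $\omega\notin F$ separately: enlarging the Galois group can only shrink the invariants (so the ``only if'' direction is unaffected), but for the ``if'' direction you must verify that the invariant class you build is fixed not just by the $(\bZ/3)^2$-part but also by the order-$2$ automorphism $\omega\mapsto\omega^{-1}$.
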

		\begin{rema}\label{rema-segre}
			One can characterize $S$ with any Picard rank by methods in \cite{PT00}.
			The Picard rank of smooth cubic surfaces is always $7$ over an algebraically closed field of characteristic $0$.
			Over $\bQ$, the Picard rank of $S$ can take integers between $1$ and $4$;
			and over a number field, it can also be $5$ or $7$.
		\end{rema}

		We need the following non-existence result of adjoint rigid $a$-cover to prove Theorem \ref{theo-factor-through}.
		\begin{lemm}\label{lemm-a-cover-of-subvariety}
			Let $(X,L)$ be defined as in Notation \ref{notation}.
			Let $Y$ be an adjoint rigid closed subvariety of $X$ satisfying $a(Y,L)=a(X,L)$ and $b(F,Y,L)<b(F,X,L)$.
			Suppose that $Y$ is not contained in the union of singular fibers of $\pi_x$.
			Then $Y$ is either
			\begin{enumerate}[ label={\rm(\arabic*)}]
				\item conics in a fiber of $\pi_x$, or
				\item fibers of $\pi_y$, or
				\item a fiber of $\pi_x$ which is a cubic surface of Picard rank $1$.
			\end{enumerate}
			Moreover, there does not exist adjoint rigid $a$-cover of such $Y$.
		\end{lemm}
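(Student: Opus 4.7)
My plan is to combine Proposition \ref{prop-adjoint-rigid-subvarieties} with the hypothesis $b(F,Y,L) < b(F,X,L) = 2$ to enumerate the possibilities for $Y$, and then rule out adjoint rigid $a$-covers case by case.

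For the first assertion, Proposition \ref{prop-adjoint-rigid-subvarieties} lists the adjoint rigid $Y \subset X$ with $a(Y,L) = 1$: $X$ itself, a subvariety of $\bigcup H_i$, a smooth fiber of $\pi_x$, a fiber of $\pi_y$, or a conic in a $\pi_x$-fiber. The standing hypothesis excludes subvarieties of $\bigcup H_i$, and the strict inequality $b < 2$ excludes $Y = X$. For $Y$ a $\pi_y$-fiber (which is $\bP^2$) or a conic (which is $\bP^1$), the Picard rank is $1$, so $b(F,Y,L) = 1$. For $Y$ a smooth $\pi_x$-fiber, $L|_Y = -K_Y$ gives $b(F,Y,L) = \rk \Pic(Y)$; by Lemma \ref{lemm-segre} this rank equals $1$ exactly when the Segre cube conditions all fail, giving the cubic surfaces of Picard rank $1$ in case (3).

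For the second assertion, handle the three cases separately. For the conic $Y \cong \bP^1$ with $\deg(L|_Y) = 2$, any smooth curve cover $f\colon Z \to Y$ of degree $d \geq 2$ satisfies $a(Z,f^*L) = (1 - g(Z))/d$, which equals $1$ only if $g(Z) = 1-d \leq -1$, impossible. For $Y \cong \bP^2$ with $L|_Y = \cO(3)$, suppose $f\colon Z \to Y$ is an adjoint rigid $a$-cover of degree $d \geq 2$ and take $Z_1 = f^{-1}(H)$ for a general line $H$, which is smooth by Bertini. Writing $f^*L = Z_1 + 2f^*H$ and applying Theorem \ref{theo-extension} with $A = 2f^*H$ (big and nef) and $E = 0$ yields a surjection
\[
H^0(Z, m(K_Z + f^*L)) \twoheadrightarrow H^0(Z_1, m(K_{Z_1} + 2f^*H|_{Z_1}))
\]
for every $m > 0$. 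Adjoint rigidity bounds the left-hand side, but on the smooth curve $Z_1$ the line bundle has degree $2g(Z_1) - 2 + 2d \geq 2$, making the right-hand side grow linearly in $m$ --- a contradiction.

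For the smooth cubic surface $Y$ of Picard rank $1$, the analogous hyperplane-cutting strategy degenerates: $L = h$ on $Y$ forces the residual divisor $L' = f^*L - f^*h$ to vanish, so the extension theorem does not apply directly. My plan is to base-change to $\overline{F}$, where adjoint rigidity, the $a$-invariant, and the Iitaka dimension all descend to geometric components by Proposition \ref{prop-propoties-of-a}, and then appeal to the non-existence of adjoint rigid $a$-covers for smooth del Pezzo surfaces of degree $\geq 2$ over an algebraically closed field, a result in the spirit of \cite[Theorem 5.5]{beheshti2022moduli} for Fano threefolds. This is the main technical obstacle; an alternative would be to cut $Z$ by two sections of $\lvert f^*h \rvert$ with fractional coefficients so that a big and nef residual polarization remains, but verifying the klt hypothesis and tracking the Iitaka dimension through the reduction is delicate.
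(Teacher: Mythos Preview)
Your first part matches the paper exactly: invoke Proposition~\ref{prop-adjoint-rigid-subvarieties}, discard $Y=X$ and the singular-fiber case, and compute $b$ in the three remaining cases.

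For the second part, your approach is correct but takes a longer road than the paper. The paper dispatches all three cases in two lines by citing the literature: cases (1) and (2) are projective spaces $(\bP^1,\cO(2))$ and $(\bP^2,\cO(3))$, which admit no $a$-cover by \cite[Example 7.1]{LT19}; case (3) is a smooth del Pezzo surface with anticanonical polarization, which admits no adjoint rigid $a$-cover by \cite[Theorem 6.2]{LTDuke}. Your direct genus computation for the conic and your extension-theorem argument for $\bP^2$ are both valid (note that $Z_1$ is connected since $f^\ast H$ is ample on the irreducible surface $Z$, so the genus formula makes sense), but they amount to reproving special cases of \cite[Example 7.1]{LT19}. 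For the cubic surface you have correctly identified the strategy --- base-change to $\overline{F}$ and invoke the del Pezzo non-existence result --- but you describe it as ``the main technical obstacle'' and float an alternative fractional-coefficient cutting argument. There is no obstacle: the result you need is precisely \cite[Theorem 6.2]{LTDuke}, already used elsewhere in the paper (e.g.\ in Step~2 of the proof of Theorem~\ref{theo-monodromy-final}), and the base-change is harmless by Proposition~\ref{prop-propoties-of-a}. Your alternative approach via fractional hyperplane sections is unnecessary and, as you yourself note, delicate to make precise.

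In short: your argument is sound, but you should replace the hand-crafted arguments for (1)--(3) with direct citations, which is both shorter and avoids the hedging in your treatment of the cubic surface.
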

		\begin{proof}
			For the first part, all possibilities of adjoint rigid subvarieties $Y$ with $a(Y,L)=1$ is listed in (3)-(7) of Proposition \ref{prop-adjoint-rigid-subvarieties}.
			For reader's convenience, we recall them as follows:
			\begin{enumerate}[label={\rm(\arabic*)}]
				\setcounter{enumi}{2}
				\item $Y=X$; or
				\item $Y$ is contained in the union of singular fibers of $\pi_x$; or 
				\item $Y$ is a smooth fiber of $\pi_x$; or 
				\item $Y$ is a fiber of $\pi_y$; or 
				\item $Y$ is a conic in a fiber of $\pi_x$. 
			\end{enumerate}
			In the above cases, case (3) is impossible due to the assumption that $b(F,Y,L)<b(F,X,L)$,
			and case (4) is impossible by assumption.
			Let $Y$ be as in case (5). Then $b(F,Y,L)=\rho(X)$ and $b(F,X,L)=2$, which imply that $Y$ falls under case (3) of Lemma \ref{lemm-a-cover-of-subvariety}.
			Let $Y$ be as in case (6), then $b(F,Y,L)=b(F,\bP^2,\cO(3))=\rho(\bP^2)=1$, , which corresponds to case (2) of Lemma \ref{lemm-a-cover-of-subvariety}.
			Let $Y$ be as in case (7), then $b(F,Y,L)=b(F,\bP^1,\cO(2))=\rho(\bP^1)=1$, which corresponds to case (1) of Lemma \ref{lemm-a-cover-of-subvariety}.

			For the second part, (1) and (2) are projective spaces, and they do not admit any $a$-cover by \cite[Example 7.1]{LT19}.
			(3) is a del Pezzo surface with anticanonical polarization, which do not admit any adjoint rigid $a$-cover by \cite[Theorem 6.2]{LTDuke}.
		\end{proof}
		
		
		\subsection{Geometric monodromy action}\label{sec-monodromy}
		Let $F$ be a field of characteristic $0$ and $\overline{F}$ be its algebraic closure.
		Fix a primitive cube root of unit $\omega$.
		Let $W\ra \bP_x^3$ be a generically finite morphism.
		Let $\overline{F}(\overline{W})$ denote the function field of $\overline{W}$.
		Define $\pi_x^\prime:X^\prime:=X\times_{\bP_x^3}W\ra W$ by base change.
		Then the function field of $\overline{\bP_x^3}$ is $$\overline{F}(\overline{\bP_x^3})\cong\overline{F}\left(\frac{x_1}{x_0},\frac{x_2}{x_0},\frac{x_3}{x_0}\right)=\overline{F}(\alpha_1^3,\alpha_2^3,\alpha_3^3).$$
		where we fix three cubic roots
		$$\alpha_1=\sqrt[\uproot{10}3]{\frac{x_1}{x_0}},\quad
		\alpha_2=\sqrt[\uproot{10}3]{\frac{x_2}{x_0}},\quad
		\alpha_3=\sqrt[\uproot{10}3]{\frac{x_3}{x_0}}.$$
		Let $T:=\bP_x^3\ra \bP_x^3$ be the finite morphism defined by $x_i\mapsto x_i^3$.
		The corresponding Galois group is
		$$\Gal(\overline{F}(\alpha_1,\alpha_2,\alpha_3)/\overline{F}(\alpha_1^3,\alpha_2^3,\alpha_3^3))=\langle \tau_1,\tau_2,\tau_3 \rangle\cong\bZ/3\bZ\times\bZ/3\bZ\times\bZ/3\bZ,$$
		where $\tau_i:\alpha_i\mapsto \omega\alpha_i$ for each $i$.
		Denote
		$$\gamma_1=\sqrt[\uproot{10}3]{\frac{x_0x_3}{x_1x_2}}=\frac{\alpha_3}{\alpha_1\alpha_2},\quad
		\gamma_2=\sqrt[\uproot{10}3]{\frac{x_0x_1}{x_2x_3}}=\frac{\alpha_1}{\alpha_2\alpha_3},\quad
		\gamma_3=\sqrt[\uproot{10}3]{\frac{x_0x_2}{x_1x_3}}=\frac{\alpha_2}{\alpha_1\alpha_3}.$$

		Let $W^\prime$ be the image of $W\ra\bP_x^3$
		and let $W\ra W^{\prime\prime}\ra W^\prime$ be the Stein factorization.
		Then the morphism $W\ra W^{\prime\prime}$ is birational and the morphism $W^{\prime\prime}\ra W^\prime$ is finite and dominant.
		These induce maps of affine coordinate rings
		$$\overline{F}[\alpha_1^3,\alpha_2^3,\alpha_3^3]
		\ra \overline{F}[\overline{W^\prime}]
		\hookrightarrow \overline{F}[\overline{W^{\prime\prime}}].$$
		We fixed the cubic roots $\gamma_i$ so that we can choose an irreducible component $\overline{T^\prime}$ of $\overline{W}\times_{\overline{\bP_x^3}}\overline{T}$ canonically.
		This induce the following maps of affine coordinate rings:
		\[\begin{tikzcd}
			{\overline{F}[\alpha_1,\alpha_2,\alpha_3]} && {\overline{F}[\overline{T^\prime}]} \\
			{\overline{F}[\alpha_1^3,\alpha_2^3,\alpha_3^3]} & {\overline{F}[\overline{W^\prime}]} & { \overline{F}[\overline{W^{\prime\prime}}]}
			\arrow[from=1-1, to=1-3]
			\arrow[hook, from=2-1, to=1-1]
			\arrow[from=2-1, to=2-2]
			\arrow[hook, from=2-2, to=2-3]
			\arrow[hook, from=2-3, to=1-3]
		\end{tikzcd}\]
		The vertical map on the right-hand side induces a field extension $\overline{F}(\overline{W})\hookrightarrow \overline{F}(\overline{T^\prime})$.
		By abuse of notation, we denote the image of $x
		_i,\alpha_i$ and $\gamma_i$ in $\overline{F}(\overline{T^\prime})$ still by the same notations.
		
		Let $\overline{X^\prime}_{\eta}:=\overline{X}\times_{\Spec\,\overline{F}}\Spec\,\kappa(\eta)$ be the generic fiber of $\pi_x^\prime$ which is the smooth cubic surface defined by the equation
		$$x_0y_0^3+x_1y_1^3+x_2y_2^3+x_3y_3^3=0$$
		over the field $\overline{F}(\overline{W})$.
		Then $\overline{F}(\overline{T^\prime})$ is a splitting field of $\overline{X^\prime}_{\eta}$.
		By Lemma \ref{lemm-segre}, $\overline{X^\prime}_{\eta}$ has Picard rank $\geq2$ if and only if $\gamma_i\in\overline{F}(\overline{W})$ for some $i\in\{1,2,3\}$.
		
		Write $\overline{X^\prime}_{\overline{\eta}}:=\overline{X}\times_{\Spec\,\overline{F}}\Spec\,\overline{\kappa(\eta)}$ for the geometric generic fiber of $\pi_x^\prime$, which is a smooth cubic surface over the algebraically closure of $\kappa(\eta)=\overline{F}(\overline{W})$.
		Let $W^\circ$ be a Zariski open subset of $W$ over which $\pi_x^\prime:X^\prime\ra W$ is smooth.
		For any geometric point $\overline{w}\in \overline{W^\circ}$, there exists a specialization map
		\begin{eqnarray}\label{eq:sp}
			N^1(\overline{X^\prime}_{\overline{\eta}})\ra N^1({\overline{X^\prime}}_{\overline{w}})
		\end{eqnarray}
		between the N\'eron-Severi spaces by \cite[Proposition 3.6]{MP12}.
		
		\begin{lemm}\label{lemm-monodromy-galois}
			Under the above notations,
			there exists a geometric point $\overline{w}\in \overline{W^\circ}$ such that the specialization map (\ref{eq:sp}) induces an isomorphism
			$$N^1(\overline{X^\prime}_{\eta})\ra N^1({\overline{X^\prime}}_{\overline{w}})^{\piet(\overline{W^\circ},\overline{w})}.$$
		\end{lemm}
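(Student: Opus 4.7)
The strategy is to combine the standard specialization theory for N\'eron--Severi groups in a smooth proper family with the explicit description of the splitting cover $\overline{T^\prime}\to\overline{W}$ constructed above.

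First, after shrinking $W$ to a smooth dense open $W^\circ$ over which $\pi_x^\prime$ is smooth and proper, I would consider, for any geometric point $\overline{w}\in\overline{W^\circ}$, the specialization map
\[
\mathrm{sp}_{\overline{w}}:N^1(\overline{X^\prime}_{\overline{\eta}})\longrightarrow N^1(\overline{X^\prime}_{\overline{w}}).
\]
By standard deformation theory this map is injective, and since the $\piet$-action lives on the local system $R^2(\pi_x^\prime)_\ast\bQ_\ell$ and the image of $\mathrm{sp}_{\overline{w}}$ consists of horizontal sections, the image is contained in $N^1(\overline{X^\prime}_{\overline{w}})^{\piet(\overline{W^\circ},\overline{w})}$.

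Second, I would invoke the Noether--Lefschetz-type theorem of Maulik--Poonen \cite{MP12}: the locus of $\overline{w}$ for which $\mathrm{sp}_{\overline{w}}$ fails to surject onto these invariants is a countable union of proper closed subsets of $\overline{W^\circ}$. Choosing $\overline{w}$ outside this countable union---which is possible since $\overline{F}$ may be assumed uncountable after an algebraically closed base extension that preserves all relevant invariants by Proposition \ref{prop-propoties-of-a}---gives an isomorphism
\[
N^1(\overline{X^\prime}_{\overline{\eta}})\cong N^1(\overline{X^\prime}_{\overline{w}})^{\piet(\overline{W^\circ},\overline{w})}.
\]

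Third, I would identify $N^1(\overline{X^\prime}_{\eta})$ with $N^1(\overline{X^\prime}_{\overline{\eta}})^{\Gal(\overline{\kappa(\eta)}/\kappa(\eta))}$ and match the Galois action with the monodromy action. Since $\overline{X^\prime}_{\eta}$ is a smooth cubic surface over $\kappa(\eta)=\overline{F}(\overline{W})$, the group $\Pic^0$ vanishes and the geometric Picard group is torsion-free, so the required identification reduces to exhibiting explicit $\kappa(\eta)$-rational representatives for each invariant class; these are supplied by Lemma \ref{lemm-segre} together with the splitting tower $\overline{T^\prime}\to\overline{W^{\prime\prime}}\to\overline{W^\prime}\to\overline{\bP_x^3}$, via Galois-symmetrizations of the 27 lines detected by the elements $\gamma_i$. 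The $\Gal(\overline{\kappa(\eta)}/\kappa(\eta))$-action factors through $\Gal(\overline{F}(\overline{T^\prime})/\overline{F}(\overline{W}))$, and this finite quotient is precisely the monodromy image of $\piet(\overline{W^\circ},\overline{w})$ in $\Aut(N^1(\overline{X^\prime}_{\overline{w}}))$, because the 27 lines on a smooth fiber become rationally defined exactly after pullback to $\overline{T^\prime}$. Taking invariants then gives the desired isomorphism.

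\textbf{Main obstacle.} The delicate step is the third one: verifying that the Galois action on $N^1(\overline{X^\prime}_{\overline{\eta}})$ and the \'etale monodromy on $N^1(\overline{X^\prime}_{\overline{w}})$ both factor through the same finite group and agree under specialization. One must carefully follow the Stein factorization $W\to W^{\prime\prime}\to W^\prime$ and the auxiliary cover $\overline{T^\prime}$ from this subsection and identify it with the finite \'etale cover of $\overline{W^\circ}$ that trivializes the incidence scheme of the 27 lines. The first two steps are essentially standard once this set-up is in place.
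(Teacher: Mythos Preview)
Your steps 1 and 2 contain a genuine confusion. The specialization map $\mathrm{sp}_{\overline{w}}:N^1(\overline{X^\prime}_{\overline{\eta}})\to N^1(\overline{X^\prime}_{\overline{w}})$ does \emph{not} land in the monodromy invariants: for a smooth cubic surface over an algebraically closed field, both sides have rank $7$, and the class of a single line on the geometric generic fiber specializes to the class of a line on the closed fiber, which is typically moved by monodromy. What Maulik--Poonen actually gives (and what is trivially true here since all smooth cubic surfaces over algebraically closed fields have Picard rank $7$) is that $\mathrm{sp}_{\overline{w}}$ is an isomorphism onto the \emph{full} $N^1(\overline{X^\prime}_{\overline{w}})$; the map is moreover $\piet$-equivariant, so it restricts to an isomorphism on invariants. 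Thus your displayed isomorphism $N^1(\overline{X^\prime}_{\overline{\eta}})\cong N^1(\overline{X^\prime}_{\overline{w}})^{\piet}$ is false, and the correct chain is
\[
N^1(\overline{X^\prime}_{\eta})\;\cong\;N^1(\overline{X^\prime}_{\overline{\eta}})^{G}\;=\;N^1(\overline{X^\prime}_{\overline{\eta}})^{\piet(\overline{W^\circ},\overline{\eta})}\;\cong\;N^1(\overline{X^\prime}_{\overline{w}})^{\piet(\overline{W^\circ},\overline{w})},
\]
where $G=\Gal(\overline{\kappa(\eta)}/\kappa(\eta))$.

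Your step 3 aims at the first two isomorphisms above, and here you have also made life harder than necessary. The paper obtains $N^1(\overline{X^\prime}_{\eta})\cong N^1(\overline{X^\prime}_{\overline{\eta}})^{G}$ directly from the Hochschild--Serre spectral sequence: the obstruction lies in $\Br(\kappa(\eta))$, which is torsion and hence vanishes after tensoring with $\bR$. For the equality $N^1(\overline{X^\prime}_{\overline{\eta}})^{G}=N^1(\overline{X^\prime}_{\overline{\eta}})^{\piet(\overline{W^\circ},\overline{\eta})}$, the paper simply invokes the canonical surjection $G=\piet(\eta,\overline{\eta})\twoheadrightarrow\piet(\overline{W^\circ},\overline{\eta})$ for a normal scheme (Stacks Project, Tag 0BQM), which forces the Galois action to factor through monodromy. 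Your explicit approach via the $27$ lines and the cover $\overline{T^\prime}$ is workable in this particular case, but it is not where the real difficulty lies; the obstacle you flagged dissolves once you use these two general facts, and the actual gap in your argument is the misstatement of what specialization does in steps 1--2.
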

		\begin{proof}
			The Hochschild-Serre spectral sequence induces a long exact sequence
			\[\begin{tikzcd}
				0 & {\Pic({\overline{X^\prime}}_{\eta})} & {\Pic({\overline{X^\prime}}_{\overline{\eta}})^G} & {\Br(\kappa(\eta))} & \cdots
				\arrow[from=1-1, to=1-2]
				\arrow[from=1-2, to=1-3]
				\arrow[from=1-3, to=1-4]
				\arrow[from=1-4, to=1-5]
			\end{tikzcd}\]
			where $G:=\Gal(\overline{\kappa(\eta)}/\kappa(\eta))$ is the Galois group.
			The N\'eron-Severi spaces of ${\overline{X^\prime}}_{\eta}$ and ${\overline{X^\prime}}_{\overline{\eta}}$ are nothing but the Picard groups tensored with $\bR$, since the varieties involved are rationally connected.
			After tensoring with $\bR$, the Brauer group vanishes, and we obtain an isomorphism
			$$N^1({\overline{X^\prime}}_{\eta})\cong N^1({\overline{X^\prime}}_{\overline{\eta}})^G.$$
			
			Moreover, there exists a canonical surjection $G=\piet(\eta,\overline{\eta})\twoheadrightarrow \piet(\overline{W^\circ},\overline{\eta})$
			by \cite[\href{https://stacks.math.columbia.edu/tag/0BQM}{Proposition 0BQM}]{stacks-project}.
			Thus, the Galois action on $N^1({\overline{X^\prime}}_{\overline{\eta}})$ factors through the monodromy action,
			and there is an equality
			$$N^1({\overline{X^\prime}}_{\overline{\eta}})^G=N^1({\overline{X^\prime}}_{\overline{\eta}})^{\piet(\overline{W^\circ},\overline{\eta})}$$
			of their invariant parts.
			
			By \cite[Theorem 1.1 and Remark 1.3]{MP12},
			there exists a geometric point $\overline{w}\in \overline{W^\circ}$ such that the specialization map $(\ref{eq:sp})$ is an isomorphism.
			Furthermore, the map $(\ref{eq:sp})$ is compatible with the monodromy action by the construction in the proof of \cite[Proposition 3.3]{MP12}.
			We thus obtain an isomorphism
			$$N^1({\overline{X^\prime}}_{\overline{\eta}})^{\piet(\overline{W^\circ},\overline{\eta})}\cong
			N^1({\overline{X^\prime}}_{\overline{w}})^{\piet(\overline{W^\circ},\overline{w})}$$
			between the monodromy invariant parts.
			The proof is completed by combining the above results.
		\end{proof}

		We will need the following descent property of birational morphisms in the proof of Theorem \ref{theo-factor-through}.
		\begin{prop}\label{prop-fpqc-descent}
			Let $f:X\ra Y$ be a morphism of geometrically integral schemes over a field $F$.
			Let $f^\prime:X^\prime\ra Y^\prime$ be the base change of $f$ along a field extension $F\hookrightarrow F^\prime$.
			If $f^\prime$ is birational, then so is $f$.
		\end{prop}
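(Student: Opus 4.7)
The plan is to reduce birationality to a statement about generic fibers. Let $\eta \in Y$ and $\eta' \in Y'$ denote the generic points, with function fields $K := K(Y)$ and $K' := K(Y')$. Geometric integrality of $Y$ guarantees that $Y'$ is integral, that $\eta'$ maps to $\eta$, and hence that the induced map $K \hookrightarrow K'$ is a field extension.

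First I would show that $f$ itself is dominant. If not, the scheme-theoretic image $Z := \overline{f(X)}$ is a proper closed subscheme of $Y$; since closed immersions are stable under base change and $Y'$ is integral, $Z_{F'}$ is a proper closed subscheme of $Y'$ that does not contain $\eta'$. Because $f'(X') \subseteq Z_{F'}$, this contradicts the dominance of $f'$ implied by its birationality.

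Next I would identify the generic fiber of $f'$ in terms of that of $f$. The map $\Spec K' \to Y'$ factors through $\Spec K$ via $Y' \to Y$, and using the standard cancellation $(X \times_F F') \times_{Y'} \Spec K' \cong X \times_Y \Spec K'$ one obtains
\[
	X'_{\eta'} \;\cong\; X_\eta \times_{\Spec K} \Spec K'.
\]
Birationality of $f'$ forces the left-hand side to be $\Spec K'$, so $X_\eta \otimes_K K' \cong K'$ as $K'$-algebras.

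Finally I would descend this structure to $X_\eta$. The morphism $X'_{\eta'} \to X_\eta$ is the base change along the faithfully flat extension $K \hookrightarrow K'$, hence surjective; since $X'_{\eta'}$ has a single point, so does $X_\eta$. Integrality of $X$ (hence of $X_\eta$) forces $X_\eta = \Spec R$ for some field $R$ with $K \subseteq R$, and the identity $\dim_K R = \dim_{K'}(R \otimes_K K') = \dim_{K'} K' = 1$ gives $R = K$. Therefore $K(X) = K(Y)$ and $f$ is birational. The only delicate point is the dimension count when $f$ is not assumed generically finite, which could a priori make $R$ infinite-dimensional over $K$; this is handled cleanly by the fact that tensoring vector spaces with a field extension preserves cardinal dimension.
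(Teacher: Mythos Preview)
Your argument is correct and takes a different route from the paper. The paper works directly at the level of open subsets: starting from opens $U' \subset X'$ and $V' \subset Y'$ with $f'|_{U'}\colon U' \xrightarrow{\sim} V'$, it pushes these down along the faithfully flat (hence open) projections $X' \to X$ and $Y' \to Y$ to obtain opens $U \subset X$ and $V \subset Y$, identifies $U'$ with the fiber product $U \times_V V'$, and then invokes fpqc descent of the property ``isomorphism'' \cite[Proposition 14.51]{ulrich} to conclude that $f|_U\colon U \to V$ is an isomorphism.

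Your approach instead passes to generic fibers and reduces everything to function fields: birationality of $f'$ forces $X'_{\eta'} = \Spec K'$, the identification $X'_{\eta'} \cong X_\eta \otimes_K K'$ then yields $X_\eta = \Spec K$ via a dimension count over the faithfully flat extension $K \hookrightarrow K'$, and hence $K(X) = K(Y)$. This is more elementary in that it replaces the descent theorem for isomorphisms by the trivial fact that $\dim_K V = \dim_{K'}(V \otimes_K K')$ for vector spaces; the paper's argument, in exchange, is more geometric and directly exhibits the open subsets witnessing birationality without appealing to the function-field characterization. One point you might make more explicit is why birationality of $f'$ forces $X'_{\eta'} = \Spec K'$ as a \emph{scheme} and not merely that its generic point has residue field $K'$: this holds because any affine open of $X'$ meeting the fiber, after localizing at the image of $\eta'$, yields a domain sandwiched between $K'$ and $K(X') = K'$.
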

		\begin{proof}
			We fix notations as follows:
			\begin{equation*}
				\begin{tikzcd}
					{X^\prime} & {Y^\prime} \\
					X & Y
					\arrow["{f^\prime}", from=1-1, to=1-2]
					\arrow["h"', from=1-1, to=2-1]
					\arrow["g", from=1-2, to=2-2]
					\arrow["f", from=2-1, to=2-2]
				\end{tikzcd}
			\end{equation*}
			Since $f^\prime$ is a birational morphism between integral schemes, there exist nonempty open subsets $U^\prime\subset X^\prime$ and $V^\prime\subset Y^\prime$ such that $f^\prime$ induces an isomorphism between $U^\prime$ and $V^\prime$.
			Since any field extension is faithfully flat, the same holds for $g$ and $h$.
			Thus the topological images $V:=g(V^\prime)$ and $U:=h(U^\prime)$ are nonempty open and we take the induced open subscheme structures.
			We have that $$f(U)=f(h(U^\prime))=g(f^\prime(U^\prime))=g(V^\prime)=V.$$
			Since the fiber product of schemes is glued by affine open covers from the target (see \cite[II. Theorem 3.3]{Hartshorne1977AlgebraicG}),
			the scheme $U^\prime$ is identified with the fiber product of $f|_U$ and $g|_{V^\prime}$.
			By the fpqc descent of the property ``isomorphism" \cite[Proposition 14.51]{ulrich}, the morphism $f:|_U:U\ra V$ is an isomorphism.
			Thus $f$ is birational since $X$ and $Y$ are integral.
		\end{proof}

		We isolate the most technical part of the proof of Theorem \ref{theo-factor-through} into the following theorem.
		\begin{theo}\label{theo-monodromy-final}
			We follow Notation \ref{notation} and Notation \ref{nota-thin-set}.
			Let $f:Y\ra X$ be a generically finite morphism of degree $\geq2$ from a smooth geometrically integral variety $Y$ such that
			\begin{enumerate}[label={\rm(\arabic*)}]
				\item $(Y,f^\ast L)$ is not adjoint rigid and satisfies $a(Y,f^\ast L)=a(X,L)=1$ and $b(F,Y,f^\ast L)\geq b(F,X,L)=2$. \label{mono-assu-1}
				\item the rational map $\rho:X\dasharrow W$ associated to $(Y,K_Y+a(Y,f^\ast L)f^\ast L)$ factors rationally through $\pi_x:X\ra \bP_x^3$, and a general fiber of $\rho$ maps dominantly to a smooth fiber of $\pi_x$. \label{mono-assu-2}
			\end{enumerate}
			Then $f:Y\ra X$ factors rationally through $f_\tau:X_\tau\ra X$ for some $\tau\in\mathfrak{S}_4$.
		\end{theo}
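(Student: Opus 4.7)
The plan is to use the hypothesis $b(F,Y,f^\ast L)\geq 2$, combined with the monodromy formula of Proposition \ref{prop-b-inv-and-monodromy} and the Segre criterion (Lemma \ref{lemm-segre}), to extract a cube root $\gamma_\tau$ in the function field of $Y$, and then identify $Y$ birationally with a base change of $X$ by the appropriate $T_\tau$.

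First I would analyze the general fiber of $\rho$. Let $E$ be the closure of a general fiber. By Proposition \ref{prop-adjoint-rigid}, $(E,f^\ast L|_E)$ is adjoint rigid with $a(E,f^\ast L|_E)=1$, and by assumption \ref{mono-assu-2} the induced map $E\to X$ dominates a smooth $\pi_x$-fiber $S$, which is a del Pezzo cubic surface with $a(S,L|_S)=1$. Since smooth del Pezzo surfaces with anticanonical polarization admit no adjoint rigid $a$-cover by \cite[Theorem 6.2]{LTDuke}, the generically finite morphism $E\to S$ must be birational. Hence, if $Y\to W^\prime\to\bP_x^3$ denotes the Stein factorization of $\pi_x\circ f$, then $\rho:Y\dashrightarrow W$ coincides birationally with $Y\to W^\prime$, and $Y$ is birational over $X$ to $X\times_{\bP_x^3}W^\prime$. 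Thus, to prove the theorem, it suffices to produce $\tau\in\mathfrak{S}_4$ such that $F(T_\tau)\subseteq F(W^\prime)$, or equivalently $\gamma_\tau\in F(Y)$, using that $F(W^\prime)$ is the algebraic closure of $F(\bP_x^3)$ in $F(Y)$ by the Stein factorization.

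To produce such a $\gamma_\tau$, I apply Proposition \ref{prop-b-inv-and-monodromy} to $\rho:Y\dashrightarrow W$. Because $K_X+L=0$ and the ramification of $f$ is supported on $(\pi_x\circ f)^{-1}(\bigcup_jH_j)$, hence vertical with respect to $\rho$, the set of horizontal boundary divisors $\{E_i\}$ appearing in the formula is empty, so
\[
b(F,Y,f^\ast L)=\dim\bigl(N^1(E_w)\cap N^1(\overline{E}_{\overline{w}})^{\pi_1^\et(\overline{W^\circ},\overline{w})}\bigr).
\]
By Lemma \ref{lemm-monodromy-galois} applied to $\pi_x^\prime:X\times_{\bP_x^3}W\to W$, the monodromy-invariant subspace on the right is identified with the N\'eron-Severi space of the generic cubic surface fiber over $\overline{F}(\overline{W})$, and intersecting with $N^1(E_w)$ singles out the classes defined over $F(W)$. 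Hence $b(F,Y,f^\ast L)\geq 2$ forces the cubic surface $X\times_{\bP_x^3}\Spec F(W)$ to have $F(W)$-Picard rank at least $2$, and the Segre criterion (Lemma \ref{lemm-segre}) then yields some $\tau$ for which $x_{\tau(0)}x_{\tau(1)}/(x_{\tau(2)}x_{\tau(3)})$ is a cube in $F(W)$. This is precisely the statement $\gamma_\tau\in F(W)\subseteq F(Y)$, and Proposition \ref{prop-fpqc-descent} then guarantees that the resulting rational map $Y\dashrightarrow X\times_{\bP_x^3}T_\tau=X_\tau$ is defined over $F$.

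The main obstacle will be the third step: controlling the interplay between the $F$-rationality condition implicit in Proposition \ref{prop-b-inv-and-monodromy} and the geometric monodromy invariance coming from Lemma \ref{lemm-monodromy-galois}. Concretely, one must verify that the extra Picard class produced by $b\geq 2$ not only lies in the monodromy-invariant part of $N^1$ of the geometric generic cubic surface but also descends to $F(W)$, so that Segre's theorem really yields a cube root in $F(W)$ rather than in a nontrivial Galois extension; this requires a careful unwinding of the Hochschild-Serre argument underlying Lemma \ref{lemm-monodromy-galois} together with the hypothesis that $W$ is geometrically integral.
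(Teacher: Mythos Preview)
Your approach is the paper's: identify $Y$ birationally with $X\times_{\bP_x^3}W'$ using the absence of adjoint rigid $a$-covers of smooth cubic surfaces, then combine Proposition~\ref{prop-b-inv-and-monodromy}, Lemma~\ref{lemm-monodromy-galois}, and Lemma~\ref{lemm-segre} to extract a cube root. Two corrections are in order.

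First, the claim that the ramification of $f$ lies over $\bigcup_jH_j$ is unjustified and in general false; nothing forces $f$ to be \'etale over the locus where $\pi_x$ is smooth. This is harmless, because you only need the \emph{inequality}
\[
\dim\Bigl(N^1(X'_w)\cap N^1(\overline{X'}_{\overline w})^{\piet(\overline{W^\circ},\overline w)}\Bigr)\ \geq\ b(F,Y,f^\ast L),
\]
which holds regardless of the $E_i$'s since quotienting by $\Span\{E_i\}$ in Proposition~\ref{prop-b-inv-and-monodromy} can only lower the dimension. Drop the equality claim and the argument survives.

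Second, your placement of Proposition~\ref{prop-fpqc-descent} is off: if you already had $\gamma_\tau\in F(W)$ there would be nothing left to descend. The paper does not try to produce $\gamma_\tau$ in $F(W)$ directly. It works over $\overline F$ throughout: Lemma~\ref{lemm-monodromy-galois} and the inequality above give only $\gamma_i\in\overline F(\overline{W'})$, which says that $\overline{T'_i}\to\overline{W'}$ is birational for a suitable irreducible component $T'_i\subset W'\times_{\bP_x^3}T_i$. Proposition~\ref{prop-fpqc-descent} is then applied to the $F$-morphism $T'_i\to W'$ (not to $Y\dashrightarrow X_\tau$) to conclude it is birational over $F$, hence admits a rational section, hence $W'\dashrightarrow T_i$ over $\bP_x^3$ and the factorization through $X_i$. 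This neatly bypasses the Galois-versus-monodromy comparison you flag as the main obstacle; the geometric integrality of $W'$ (forced by that of $Y$ through the birational identification $Y\sim X'$) is what makes the descent go through.
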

		
		\begin{proof}
			\textit{Step 1: reduce the problem to show that $W^\prime\ra \bP_x^3$ factors rationally through $T_i\ra \bP_x^3$.}
			First of all, only three morphisms $T_i\ra \bP_x^3$ $(i=1,2,3)$ are involved among the morphisms $T_\tau\ra \bP_x^3$ $(\tau\in\mathfrak{S}_4)$, which are defined at the beginning of Section \ref{sec-proof-of-thm}.
			Let $W^\prime\ra \bP_x^3$ be a resolution of indeterminacy of the rational map $W\dasharrow \bP_x^3$ asserted in (2) of Theorem \ref{theo-monodromy-final} such that $W^\prime$ is smooth.
			Let $\Gamma\ra W^\prime$ be the graph of $Y\dasharrow W^\prime$.
			Write $X^\prime:=X\times_{\bP_x^3} W$.
			The scheme $X^\prime$ is irreducible since $W\ra \bP_x^3$ is an algebraic fiber space, see Remark \ref{rema-algebraic-fiber-space}.
			We thus obtain a morphism $\Gamma\ra X^\prime$ by the universal property of fiber products.
			To prove that $f:Y\ra X$ factors rationally through $f_i:X_i\ra X$, it is sufficient to prove that there exists a rational map $X^\prime\ra X_i$ which commutes with $X^\prime\ra X$ and $X_i\ra X$.
			This, in turn, reduces the problem to proving the existence of a rational map $W^\prime\dasharrow T_i$ that commutes with $W^\prime\ra\bP_x^3$ and $T_i\ra\bP_x^3$.
			For reader's convenience, we summarize the known morphisms into the following commutative diagram.
			In the diagram, ``bir." denotes a birational morphism, and a Cartesian square is indicated by a square.
			
			\[ \begin{tikzcd}
				\Gamma\rar["\textrm{ bir.}"]\dar["\gamma"] &Y\dar["f"]&\\
				X^\prime\rar["f^\prime"]\dar\drar[phantom, "\square"]  & X\dar["\pi_x"]\drar[phantom, "\square"] & X_i\dar\lar["f_i"swap] \\%
				W^\prime \rar["g^\prime"]& \bP_x^3& T_i\lar["g_i"swap]
			\end{tikzcd}
			\]
			
			\textit{Step 2: show that $\gamma:\Gamma\ra X^\prime$ is birational.}
			It is sufficient to prove that the base change $\overline{\Gamma}\ra \overline{X^\prime}$ to an algebraic closure $\overline{F}$ is birational by Proposition \ref{prop-fpqc-descent}.
			Let $\overline{w}\in\overline{W^\prime}$ be a general closed point and $\overline{a}$ be its image in $\overline{\bP_x^3}$.
			Then the restriction $\overline{\Gamma}_{\overline{w}}\rightarrow \overline{X}_{\overline{a}}$ is a surjection of adjoint rigid varieties with the same $a$-value by Proposition \ref{prop-adjoint-rigid} and \ref{prop-adjoint-rigid-2}.
			So $\overline{\Gamma}_{\overline{w}}\rightarrow \overline{X}_{\overline{a}}$ must be birational since there does not exist adjoint rigid $a$-cover of an anticanonically polarized smooth cubic surface by \cite[Theorem 6.2]{LTDuke}.
			Thus the morphism $\overline{\Gamma}_{\overline{w}}\ra\overline{X^\prime}_{\overline{w}}$ is birational since
			the morphism $\overline{X^\prime}_{\overline{w}}\rightarrow \overline{X}_{\overline{a}}$ is isomorphic.
			
			We now prove that $\overline{\gamma}:\overline{\Gamma}\ra\overline{X^\prime}$ is dominant and generically finite.
			We have two dominant morphisms $\overline{\Gamma}\ra \overline{W^\prime}$ and $\overline{X^\prime}\ra \overline{W^\prime}$ such that the restriction $\overline{\gamma}_{\overline{w}}$ of $\gamma$ to a general closed point $\overline{w}\in\overline{W^\prime}$ is a birational morphism.
			Thus the morphism $\overline{\gamma}$ must be dominant.
			Moreover, the morphism $\overline{\gamma}$ is generically finite since the composition $\overline{\Gamma}\ra \overline{Y}\ra \overline{X}$ is generically finite.

			We now prove that $\overline{\gamma}:\overline{\Gamma}\ra\overline{X^\prime}$ is birational.
			Since $\overline{\gamma}$ is generically finite and dominant, by generic flatness \cite[\href{https://stacks.math.columbia.edu/tag/0529}{Tag 0529}]{stacks-project}, there exists a dense open subset $U$ of $\overline{X^\prime}$ such that for any closed point $x\in U$, the preimage $\overline{\gamma}^{-1}(x)$ is a finite set with exactly $d=\deg(\overline{\gamma})$ points.
			Since $\overline{\Gamma_{\overline{w}}}\ra \overline{X^\prime}_{\overline{w}}$ is birational for general closed point $\overline{w}\in\overline{W^\prime}$,
			we must have $d=1$.
			Thus $\overline{\gamma}:\overline{\Gamma}\ra\overline{X^\prime}$ is birational.

			\textit{Step 3: show that $g^\prime$ factors rationally through $g_i$.}
			The three morphisms $T_i\ra \bP^3$ correspond to the field extensions ${F}(\alpha_1^3,\alpha_2^3,\alpha_3^3)\hookrightarrow F(\alpha_1^3,\alpha_2^3,\alpha_3^3,\gamma_i)$ for $i\in\{1,2,3\}$
			Write $\bP_x^{3\circ}:=\bP_x^3\backslash V(x_0x_1x_2x_3))$ and let $W^{\prime\circ}$ be the pullback of $\bP_x^{3\circ}$ to $W^\prime$.
			Since $L$ is ample on $X$, its pullback to $\Gamma$ is big and semiample.
			Thus $a(\Gamma_w,L|_{\Gamma_w})$ is constant for $w\in W^{\prime\circ}$ by \cite[Theorem 3.14]{LT19}.
			By Proposition \ref{prop-adjoint-rigid}, we have that $a(X^\prime_w,f^{\prime\ast}L)=a(X^\prime,f^{\prime\ast}L)$ for any closed point $w\in W^\circ$.
			This is assumption (3) of Proposition \ref{prop-b-inv-and-monodromy}. The other three assumptions are satisfied as well:
			\begin{enumerate}[label={\rm(\arabic*)}]
				\item $W^\prime$ is smooth by construction.
				\item $X^\prime\ra W^\prime$ is smooth over $W^{\prime\circ}$ since it is base changed from a smooth morphism.
				\setcounter{enumi}{3}
				\item $X^\prime\ra W^\prime$ is birationally equivalent to $Y\ra W$ by construction.
			\end{enumerate}
			The conclusion of Step 2 and the assumption (1) of Theorem \ref{theo-monodromy-final} imply that
			$$b(F,X^\prime,f^{\prime\ast}L)=b(F,Y,f^\ast L)\geq2.$$
			This, together with Proposition \ref{prop-b-inv-and-monodromy}
			and Lemma \ref{lemm-monodromy-galois}, shows that there exists a geometric point $\overline{w}\in\overline{W^{\prime\circ}}$ such that
			\begin{align*}
				\dim (N^1(\overline{X^\prime}_{\overline{\eta}}))=\dim(N^1(\overline{X^\prime}_{\overline{w}})^{\piet(\overline{W^{\prime\circ}},\overline{w})})&
				\geq b(F,X^\prime,f^{\prime\ast}L)\geq 2,
			\end{align*}
			which implies that $\gamma_i\in \overline{F}(\overline{W^\prime})$ for some $i\in\{1,2,3\}$ by Lemma \ref{lemm-segre} (see also the arguments at the beginning of Section \ref{sec-monodromy}).
			Since we fixed a cubic root $\gamma_i$, we can choose an irreducible component $T^\prime_i$ of $W^\prime\times_{\bP_x^3}T_i$ canonically:
			\[\begin{tikzcd}
				{{T^\prime_i}} & {{T_i}} \\
				{{W^\prime}} & {{\bP_x^3}}
				\arrow[from=1-1, to=1-2]
				\arrow[two heads, from=1-1, to=2-1]
				\arrow["{g_i}", two heads, from=1-2, to=2-2]
				\arrow["{g^\prime}", from=2-1, to=2-2]
			\end{tikzcd}\]
			Then $\gamma_i\in \overline{F}(\overline{W^\prime})$ by the arguments at the beginning of Section \ref{sec-monodromy}.
			Thus $\overline{T^\prime_i}\ra\overline{W^\prime}$ is birational, and so is
			$T_i^\prime\ra W^\prime$ by Proposition \ref{prop-fpqc-descent}.
			Hence there is a rational section of $T_i^\prime\ra W^\prime$.
			Thus $W^\prime$ factors rationally through $T_i$ over $\bP_x^3$ as desired.
		\end{proof}

		\subsection{Proof of Theorem \ref{theo-factor-through} and Corollary \ref{coro-thin-set}}\label{sec-proof-of-thm}
		\begin{proof}[Proof of Theorem \ref{theo-factor-through}]
			Define closed subvarieties $T_1,T_2$ and $T_3$ of $\bP^3_x\times \bP^1_{s,t}$ by
			\begin{align*}
				T_1:=&\left\{s^3x_{0}x_{1}-t^3x_{2}x_{3}=0\right\},\\
				T_2:=&\left\{s^3x_{0}x_{2}-t^3x_{1}x_{3}=0\right\},\\
				T_3:=&\left\{s^3x_{0}x_{3}-t^3x_{1}x_{2}=0\right\}.
			\end{align*}
			It is enough to consider $T_1,T_2$ and $T_3$ instead of $T_\tau$ for each $\tau\in\mathfrak{S}_4$.
			Let $f_i:X_i:=X\times_{\bP_x^3}T_i\rightarrow X$ be the base change along $\pi_x:X\rightarrow \bP_x^3$.
			By the classification results in Proposition \ref{prop-adjoint-rigid-subvarieties}, we see subvarieties $Y\subset X$ with $a(Y,L)>a(X,L)$ are contained in
			\begin{align*}
				V_1:=&\left\{x_{0}y_{0}^3+x_{1}y_{1}^3=x_{2}y_{2}^3+x_{3}y_{3}^3=0\right\},\\
				V_2:=&\left\{x_{0}y_{0}^3+x_{2}y_{2}^3=x_{1}y_{1}^3+x_{3}y_{3}^3=0\right\},\\
				V_3:=&\left\{x_{0}y_{0}^3+x_{3}y_{3}^3=x_{1}y_{1}^3+x_{2}y_{2}^3=0\right\},
			\end{align*}
			where $V_1,V_2$ and $V_3$ are the closure of families of lines in smooth fibers of $\pi_x$.
			(Note that the union of singular fibers of $\pi_x$ is already contained in each of $V_i$ $(i=1,2,3)$.)
			In fact, the union of subvarieties $Y\subset X$ with $a(Y,L)>a(X,L)$ is exactly the set $\bigcup_{i=1}^3 V_i$, since the latter set is covered by $h_2$-lines in $\pi_x$-fibers, whose $a$-invariant equals $2$.
			
			Let $Y$ be a smooth projective variety and $f:Y\rightarrow X$ be a thin map with $(a(Y,L),b(Y,L))\geq(a(X,L),b(X,L))$ and denote the image of $Y$ in $X$ by $Y^\prime$.
			We will show that $f$ factors through $f_i$ or $V_i$ for some $i$.
			By Proposition \ref{prop-a-dominant}, we have that $a(Y^\prime,L)\geq a(Y,L)\geq a(X,L)$.
			If $a(Y^\prime,L)>a(X,L)$, then $f$ factors through $V_i$ for some $i$.
			Thus we can suppose throughout the proof that $a(Y^\prime,L)=a(Y,L)=a(X,L)$.
			
			When $(Y,L)$ is adjoint rigid, we have that $Y^\prime$ is adjoint rigid as well by Proposition \ref{prop-adjoint-rigid-2}.
			If further $b(Y^\prime,L)< b(X,L)$, there is no adjoint rigid $a$-cover of $Y^\prime$ by Lemma \ref{lemm-a-cover-of-subvariety}, which implies that $\deg f=1$ and thus $b(Y,L)=b(Y^\prime,L)< b(X,L)$, a contradiction.
			
			When $(Y,L)$ is adjoint rigid and $b(Y^\prime,L)\geq b(X,L)$, we need to consider the case of $Y^\prime=X$.
			Since there is no adjoint rigid $a$-cover of $X$ by Theorem \ref{theo-a-cover}, the morphism $f$ in this case is dominant and birational, which is not a thin map.
			Thus we can assume that $Y^\prime$ is a proper closed subvariety of $X$.
			Since $Y^\prime$ is adjoint rigid with $a(Y^\prime,L)=a(X,L)$ and $b(F,Y^\prime,L)\geq b(F,X,L)$, by Proposition \ref{prop-adjoint-rigid-subvarieties}, either
			\begin{itemize}
				\item $Y^\prime$ is contained in the union of singular fibers of $\pi_x$, which is	contained in $V_i$ for each of $i$; or
				\item $Y^\prime$ is a line in a $\pi_x$-fiber, which is contained in $\bigcup_{i=1}^3 V_i$; or
				\item $Y^\prime$ is a smooth $\pi_x$-fiber with  $b(F,Y^\prime,L)\geq2$, which is contained in $\bigcup_{i=1}^3 f_i(X_i(F))$ by Lemma \ref{lemm-segre}.
			\end{itemize}
			In each case, the morphism $f$ factors through $V_i$ or $f_i$ for some $i$.
			This finishes the case when $(Y,L)$ is adjoint rigid.
			
			It remains to show the case when $(Y,L)$ is not adjoint rigid with $a(Y,L)=a(Y^\prime,L)=a(X,L)$.
			(Note that in this case, $(Y^\prime,L)$ can be adjoint rigid or not.)
			Let $\rho:{Y}\dashrightarrow W$ be the rational map associated to $({Y},K_{{Y}}+a(Y,L)L)$.
			
			There exists a family $(s_i:\cU_i\rightarrow X,p_i:\cU_i\rightarrow W_i)$ asserted in Corollary \ref{coro-family-breaking} such that $f$ factors rationally through $s_i$.
			Since we only need to show the rationally factoring property,
			we may take a resolution of indeterminacy.
			We thus obtain the following commutative diagram where $s$ is birational.

			\[\begin{tikzcd}
				\Gamma && {\cU_i} \\
				& Y && X \\
				W && {W_i}
				\arrow["p"', from=1-1, to=3-1]
				\arrow["s", from=1-1, to=2-2]
				\arrow["{s_i}", from=1-3, to=2-4]
				\arrow["{p_i}"'{pos=0.2}, from=1-3, to=3-3]
				\arrow[from=1-1, to=1-3]
				\arrow["f"{pos=0.2}, from=2-2, to=2-4]
				\arrow[dashed, from=3-1, to=3-3]
				\arrow[dashed, from=2-2, to=3-1]
			\end{tikzcd}\]
			Let $E$ denote a general fiber $\Gamma_w$ such that $E$ is adjoint rigid with $a(E,L)=a(Y,L)$ and let $E^\prime$ be its image in $X$.
			We conclude that $({f}\circ s)|_E:E\rightarrow E^\prime$ is an adjoint rigid $a$-cover of an adjoint rigid subvariety $E^\prime$ of $X$ with $a(E^\prime,L)=a(X,L)$.
			
			It remains to have a case-by-case analysis for the family $W_i$.
			We may assume that the family is not contained in the union of singular fibers of $\pi_x$.
			As classified in Lemma \ref{lemm-a-cover-of-subvariety}, when $b(\overline{F},E^\prime,L)<b(F,X,L)=2$, the family $W_i$ parametrizes either
			\begin{itemize}
				\item fibers of $\pi_y$, or
				\item conics in fibers of $\pi_x$, or
			\end{itemize}
			Note that smooth $\pi_x$-fibers are not included since they have Picard rank $7$ over $\overline{F}$.
			In each case, there does not exist adjoint rigid $a$-cover of $E^\prime$ by Lemma \ref{lemm-a-cover-of-subvariety}, which implies that general fibers of $\Gamma\rightarrow\cU_i\rightarrow W_i$ are birational to disjoint copies of $E^\prime$.
			Thus we have that
			\begin{align*}
				b(F,Y,L)&=b(F,\Gamma,L)&\text{(since $Y$ is birational to $\Gamma$)}\\
				&\leq b(\overline{F},\Gamma,L)&\text{(by Proposition \ref{prop-properties-of-b})}\\
				&\leq b(\overline{F},E^\prime,L)&\text{(by Proposition \ref{prop-adjoint-rigid})}\\
				&< b(F,X,L)&\text{(by assumption)}\\
				&\leq b(F,Y,L),&\text{(by assumption)}
			\end{align*}
			which is a contradiction.
			
			Otherwise, we have that $b(\overline{F},E^\prime,L)\geq2$.
			The only possibility is when $W_i$ generically parametrizes smooth fibers of $\pi_x$.
			Let us show that $W_i$ is isomorphic to $\bP_x^3$.
			Since $\pi_x:X\ra \bP_x^3$ is a flat family generically parametrizes smooth fibers of $\pi_x$, there exists a morphism $u:\bP_x^3\ra W_i$ by the universal property of Hilbert schemes.
			Since each smooth fiber of $\pi_x$ appears in the family $\pi_x:X\ra \bP_x^3$ exactly once, the morphism $u$ is injective and dominant.
			Then $u$ is birational since we adopted the reduced structure for $W_i$.
			We conclude that $u$ is an isomorphism since $\bP_x^3$ is minimal.
			In this case, the diagram reads as follows.
			\begin{equation*}
				\begin{tikzcd}
					\Gamma && X \\
					& {{Y}} && X \\
					W && {\bP_x^3}
					\arrow["p"', from=1-1, to=3-1]
					\arrow["s", from=1-1, to=2-2]
					\arrow[dashed, from=2-2, to=3-1]
					\arrow[ Rightarrow,no head, from=1-3, to=2-4]
					\arrow["{\pi_x}"'{pos=0.2}, from=1-3, to=3-3]
					\arrow["h",from=1-1, to=1-3]
					\arrow["{{f}}"{pos=0.2}, from=2-2, to=2-4]
					\arrow["g",dashed,from=3-1, to=3-3]
				\end{tikzcd}
			\end{equation*}
			
			Let $W^\circ:=W\backslash g^{-1}(V_0)$ denote the open subscheme corresponding to smooth fibers of $\pi_x$.
			Since $W$ and $X$ are irreducible, the dimension of the image of $h_w:\Gamma_w\rightarrow X_a$ is constant for general $w\in W$ by  \cite[\href{https://stacks.math.columbia.edu/tag/05F7}{Tag 05F7}]{stacks-project}, where $a:=g(w)$.
			We may assume that $h_w:\Gamma_w\rightarrow X_a$ is dominant for general $w\in W$, since otherwise $\Gamma\rightarrow W$ will factor through a family of dimension $1$ subvarieties, which has been discussed above.
			Then the conclusion follows by Theorem \ref{theo-monodromy-final}.
		\end{proof}

		\begin{proof}[Proof of Corollary \ref{coro-thin-set}]
			The set $Z$ is a thin set since it is defined by finitely many proper closed subsets $V_\tau$ and finitely many finite morphisms $f_\tau$.
			Each element of $V_\tau(F)$ lies on a line in a $\pi_x$-fiber, which has $a$-value equals $2$;
			each element of $f_\tau(X_\tau(F))$ lies either on a singular fiber of $\pi_x$, which is already contained in $V_\tau$, or on a smooth fiber $S$ of $\pi_x$ satisfying $a(S,L)=1$ and $b(F,S,L)\geq2$.
			Thus $Z\subseteq \bigcup_f f(Y(F))$ in equation (\ref{eq}).
			Conversely, let $f:Y\rightarrow X$ be an $F$-thin map where $Y$ is a smooth geometrically integral variety satisfying
			$$(a(Y,f^\ast L),b(F,Y,f^\ast L))\geq(a(X,L),b(F,X,L))=(1,2)$$
			in lexicographical order.
			By Theorem \ref{theo-factor-through}, there exists a rational map $\phi:Y\dasharrow Q$ such that the following diagram commutes:
			\[\begin{tikzcd}
				Y \\
				Q & X
				\arrow["\phi"', dashed, from=1-1, to=2-1]
				\arrow["f", from=1-1, to=2-2]
				\arrow["q"', from=2-1, to=2-2]
			\end{tikzcd}\]
			where $q:Q\ra X$ is either the inclusion of $V_\tau\subset X$ or the morphism $f_\tau:X_\tau\ra X$, for some $\tau\in\mathfrak{S}_4$.
			Since $Y$ is smooth, the Lang-Nishimura theorem implies that $f(Y(F))\subseteq q(Q(F))\subseteq Z$.
			Thus, equation (\ref{eq}) holds.

			By Theorem \ref{theo-a-cover}, there is no thin map $f:Y\ra X$ satisfying $\dim Y=\dim X$ and $\kappa(Y,K_Y+a(Y,f^\ast L)f^\ast L)=0$.
			Thus, removing restrictions (a), (b) and (c) in Definition \ref{defi-ges} does not affect the geometric exceptional set of $(X,L)$.
			Hence the last assertion of Corollary \ref{coro-thin-set} holds.
		\end{proof}

\normalem
\bibliographystyle{alpha}
\bibliography{myref}


\end{document}